\documentclass[11pt, oneside]{article}   	
\usepackage{geometry}                		
\geometry{a4paper}                   		
\usepackage{graphicx}				
\usepackage{epstopdf}
\usepackage[utf8]{inputenc}
\usepackage[english]{babel}
\usepackage[T1]{fontenc}
\usepackage{dsfont}
\usepackage{gensymb}
\usepackage{array}
\usepackage{latexsym}
\usepackage{amssymb,amsbsy,amsmath,amsfonts,amscd}
\usepackage{amsthm}
\usepackage{color,subfigure}
\usepackage{authblk}
\usepackage{ulem}
\setlength{\oddsidemargin}{0mm}
\setlength{\evensidemargin}{0mm}
\setlength{\topmargin}{-7mm}
\setlength{\textheight}{22cm}
\setlength{\textwidth}{17cm}

\parindent 10pt

\newcommand{\commentout}[1]{}
\newcommand{\R}{\mathbb{R}}

\newcommand {\e}  {\varepsilon}

\newcommand {\Chi} {{\bf \raise 2pt \hbox{$\chi$}} }

\newcommand {\Ebar} {\overline{E}}
\newcommand {\Lbar} {\overline{L}}
\newcommand {\Abar} {\overline{A}}

\newcommand {\sgn} { {\rm sgn} }

\newcommand {\f}   {\frac}
\newcommand {\df}   {\displaystyle\frac}
\newcommand {\p}   {\partial}
\newcommand{\dis}{\displaystyle}
\newcommand{\tr}{\mathrm{tr}}
\newcommand{\beq}{\begin{equation}}
\newcommand{\eeq}{\end{equation}}
\newcommand{\bea} {\left\{ \begin{array}{rl}}
\newcommand{\eea} {\end{array}\right.}
\newcommand{\bepa}{\left\{ \begin{array}{l}}
\newcommand{\eepa} {\end{array}\right.}
\newcommand{\ueps}{u_{\e}}
\newcommand{\veps}{v_{\e}}
\newcommand{\weps}{w_{\e}}
\newcommand{\feps}{f_{\e}}
\newcommand{\geps}{g_{\e}}
\newcommand{\bartp}{\overline{\tau}_P}
\newcommand{\bardp}{\overline{\delta}_P}

\newtheorem{theorem}{Theorem}[section]
\newtheorem{lemma}[theorem]{Lemma}

\newtheorem{remark}[theorem]{Remark}

\newtheorem{proposition}[theorem]{Proposition}

\usepackage{array,multirow,makecell}
\setcellgapes{1pt}
\makegapedcells
\newcolumntype{R}[1]{>{\raggedleft\arraybackslash }b{#1}}
\newcolumntype{L}[1]{>{\raggedright\arraybackslash }b{#1}}
\newcolumntype{C}[1]{>{\centering\arraybackslash }b{#1}}

\usepackage[colorlinks=true]{hyperref}
\usepackage{cite}
\begin{document}

\title{Oscillatory regimes in a mosquito population model with larval feedback on egg hatching}

\author[1,2]{Martin Strugarek}
\author[2]{Laetitia Dufour}
\author[3]{Nicolas Vauchelet}
\author[2]{Luis Almeida}
\author[2]{Benoît Perthame}
\author[4]{Daniel A.M. Villela}
\affil[1]{AgroParisTech, 16 rue Claude Bernard, F-75231 Paris Cedex 05, France}
\affil[2]{Université Pierre et Marie Curie, Paris 6, Laboratoire Jacques–Louis Lions, UMR 7598 CNRS, 4 Place
Jussieu, 75252 Paris, France}
\affil[3]{LAGA - UMR 7539
 Institut Galil\'{e}e
 Universit\'{e} Paris 13
 99, avenue Jean-Baptiste Cl\'{e}ment
 93430 Villetaneuse - France}
\affil[4]{PROCC, Fundaç\~{a}o Oswaldo Cruz, Av. Brasil, 4365. Manguinhos
CEP 21040-360, Rio de Janeiro, Brazil}

\date{\today}
\maketitle

\begin{abstract}
Understanding mosquitoes life cycle is of great interest presently
because of the increasing impact of vector borne diseases in several countries.
There is evidence of oscillations in mosquito populations independent of seasonality, still unexplained, based on observations both in laboratories and in nature. We propose a simple mathematical model of egg hatching enhancement by larvae which
produces such oscillations that conveys a possible explanation. We propose both
a theoretical analysis, based on slow-fast dynamics and Hopf bifurcation, and
numerical investigations in order to shed some light on the mechanisms at work in this model.
\end{abstract}

{\bf Key-words:} Mathematical biology; Hopf bifurcation; Slow-fast dynamics; Egg hatching; Mosquitoes life cycle;

{\bf MSC numbers:} 34C23; 34E15; 92D25;

\section*{Introduction}

Today numerous areas of the world are severely affected by mosquito-borne viral diseases, with notable examples including dengue, chikungunya and Zika (see \cite{Bha.Burden}). Scientists are hard at work to find new and efficient ways to mitigate the impact of, or even eradicate these arboviral infections, and especially target vector control.

A beneficial implementation of any of the vector control methods requires a good understanding of the local vector population's bio-ecology, and a reliable monitoring of its dynamics. To achieve better knowledge, this monitoring needs not only be demographic (using trap counts), but can also use genetic data - for the example of {\it Wolbachia} see \cite{Hof.Stability} and \cite{Yea.Mitochondrial}.
However, studies in Rio de Janeiro throughout the past decade have shown that monitoring urban populations of {\it Aedes aegypti} is a difficult task (see \cite{art:osc}, \cite{Dut.Lab}, \cite{Density}), largely because of environmental variations (spatial heterogeneity, seasonality, etc.).
A first - to the best of our knowledge - systematic comparison of two complex models of {\it Aedes aegypti} population dynamics, relevant for a control program, was done recently in \cite{Leg.Comparison}.
Another application of proper modeling of mosquito's life-cycle is the risk estimation for disease emergence (see \cite{Guz.Potential}).

We believe that the {\it intrinsic} life cycle of {\it Aedes aegpyti} may still be improperly modeled, and effort should be put in the direction of integrating several key features in the models. Among these features, we have in mind the {\it transitions} between the stages (egg, larva, pupa, adult) or even within these stages (larval instars, etc.) because in theory, any of these transitions (ovipositing behavior, hatching, pupation, mating, etc.) can give rise to nonlinearity. 
Nonlinearities ought to be taken into account when using collected data, so that they do not blur the picture we get of the actual population's dynamics.
In addition, synchronizing or de-synchronizing effects, either in time or space, are possible outputs of these nonlinearities, and can result in variations in crucial traits of the mosquito populations, such as vector capacity (see \cite{Jul.She}, \cite{Bar.Effect})

We focus exclusively in this work on one single aspect of the evolution of the mosquito population, setting the hypothesis that the larval density in breeding sites directly impacts the hatching rate. Previous works on hatching and larvae dynamics include \cite{Azn.Modeling}, \cite{Azn.Model}, where stochastic models with food dynamics were used. However, to the best of our knowledge, no mathematical work has been published on the very topic of hatching enhancement through larval density since the experimental findings of \cite{Liv.Complex}.
Observations on this phenomenon are uneasy to obtain in the field but can be assessed in the lab (see \cite{art:hatch}). Further research in this field could benefit from mathematical modeling tools able to take it into account and this may help monitoring the dynamics of mosquito populations.

We develop a mathematical model of the dynamics of mosquito population, with the requirements that this model be sufficiently generic to match experimental observations across various conditions and sufficiently simple so that it is possible to handle it theoretically and interpret it. Therefore we choose to develop a deterministic model based on a system of ordinary differential equations, as was done, for example, in \cite{AWD}.
Our simplistic model involves the positive influence of larvae on the system, acting on hatching rate. We show that this feature can explain oscillations.

We draw a general picture of the system's properties in Section \ref{sec:study}, and justify rigorously the use of a two-population model as a further simplification for the identification of the qualitative properties induced by hatching feedback.
Then we focus on two parameter regimes of particular interest.
Firstly (Section \ref{sec:slow-fast}) when the quantity of eggs is large compared to the quantity of larvae, oscillations can appear and we are faced to a slow-fast oscillatory regime giving rise to oscillation profiles comparable to those of the FitzHugh-Nagumo system (Theorem \ref{thm:mainslowfast}). We can compute the amplitude of the oscillations in this case, where they are typically large, and also their period. 
Secondly (Section \ref{lll}), we show that our model presents a Hopf bifurcation at any positive equilibrium of the system, assuming the quantity of larvae promotes hatching.
The bifurcation occurs as the feedback becomes stronger (Theorem \ref{mainres}). In this case we can compute the period of the oscillations at the bifurcation point.
We provide numerical results for the system parametrized (roughly) for a tropical area such as Rio de Janeiro, showing that the range of possible oscillations is wide.

\section{Models and their reduction}
\label{sec:reduction}
The life cycle of a mosquito (male and female) consists of two main stages: the aquatic stage (egg, larva, pupa), and the adult stage. 
We adopt a population biology point of view, which means that we describe the mosquitoes life-cycle thanks to a system of ordinary differential equations.
For the purpose of studying the impact of larval density on hatching, we introduce the number densities of each population: $A(t)$ (adults), $E(t)$ (eggs), $L(t)$ (larvae) and $P(t)$ (pupae).

In a compartmental model, one can suppose the following type of dynamics
\begin{equation}
  \left\{ \tag{$S_4$}
      \begin{aligned}
\frac{d}{dt} E &= \beta_E A - E \big( H(E,L) + \delta_E \big),  \\
\f{d}{dt} L &= E H(E,L) - L \big( \phi(L) + \delta_L + \tau_L \big),  \\
\f{d}{dt} P &= \tau_L L - \delta_P P - \tau_P P,  \\
\f{d}{dt} A &= \tau_P P - \delta_A A.
\end{aligned}
    \right.
    \label{eq:SG}
\end{equation}

We interpret the parameters as follows: $\beta_E>0$ is the intrinsic oviposition rate; $\delta_E, \delta_L, \delta_P, \delta_A >0$ are the death rates for eggs, larvae, pupae and adults, respectively; $\tau_L$, $\tau_P >0$ are the transition rates from larvae to pupae and pupae to adults, respectively ; $\phi$ tunes an extra-death term due to intra-specific competition (this term is non-linear and we assume that it depends only on the larval density);
finally, $H(E,L)$ is the hatching rate, which may in general depend on larval density~$L$ and on egg density~$E$, neglecting a possible effect of pupae.

In order to reduce \eqref{eq:SG} to a simpler model we suppose pupa population at equilibrium. This boils down to assuming that the time dynamics for pupae is fast compared to the other compartments and thus $P=\df{\tau_L}{\delta_P+\tau_P} L$.

To justify this approximation more rigorously, we assume $\tau_P, \delta_P = O(1/\e)$ (quantifying the ``fast dynamics'' for pupae) and define $\bartp = \e \tau_P$, $\bardp = \e \delta_P$.
We introduce $P = \e M$ and then we find the following equations on $M$ and $A$ (those on $E$ and $L$ are untouched)
\[
 \bepa
 \e \dis\f{d M}{dt} = \tau_L L - \e M (\tau_P + \delta_P),
 \\[10pt]
 \dis\f{d A}{d t} = \e \tau_P M - \delta_A A.
 \eepa
\]
This method follows the classical justification of Michaelis-Menten laws (see \cite{Murray}, \cite{Perthame}).
We end up with
\[
 \bepa
 \e \dis\f{d M}{dt} = \tau_L L -  M (\bartp+\bardp),
 \\[10pt]
 \dis\f{d A}{d t} = \bartp M - \delta_A A,
  \eepa
\]
and in the limit $\e \to 0$, we recover our claim under the form $M = \f{\tau_L}{\bartp+\bardp} L$.

This simplification enables us to reduce the model to dimension $3$.
From now on we also assume $H(E, L) = h(L)$ and $\phi(L) = c L$ in order to obtain the simplified system
\begin{equation}
  \left\{ \tag{$S_3$}
      \begin{aligned}
\frac{d}{dt} E &= \beta_E A - \delta_E E -h(L) E,  \\
\f{d}{dt} L &= h(L) E - \delta_L L - c L^2 - \tau_L L,  \\
\f{d}{dt} A &= \f{\tau_P\tau_L}{\delta_P+\tau_P} L - \delta_A A.
\end{aligned}
    \right.
    \label{eq:S3}
\end{equation}

We can proceed to a further reduction by supposing adult population at equilibrium. This boils down to assuming that the time dynamics for adult mosquitoes is fast compared to the other compartments.
Exactly as above with the pupae, in the approximation when $\delta_A$ and $\beta_E$ are large (and $A$ itself is small), it makes sense to set in this system, at first order, $A=\f{\tau_P\tau_L}{(\delta_P+\tau_P)\delta_A} L$.

Finally, system \eqref{eq:SG} reduces to the following system in dimension $2$:
\begin{equation}
  \left\{ 
      \begin{aligned}
       \frac{d}{dt} E&=b_E L - d_E E -h(L) E,\\
        \f{d}{dt}  L & =h(L) E - d_L L - c L^2,
      \end{aligned}
    \right.
    \label{eq:S}
\end{equation}
where $b_E=\beta_E\f{\tau_P\tau_L}{(\delta_P+\tau_P)\delta_A}>0$, $d_E=\delta_E>0$ and $d_L=\delta_L+\tau_L>0$.

We perform this model reduction because it is sufficient to take into account the larval effect.
Indeed, we show and quantify how the larval density-dependent hatching rate effectively generates oscillations, without any other source of instability (like time-delay, temperature variations or other environment-related effects).
However, for future practical applications, further studies including the use of a more biologically realistic model will be mandatory.

According to experimental data (results from \cite{art:hatch}) and mainly guided by a biological intuition 
we assume the hatching undergoes saturation for large values of $L$:
\beq
h \in \mathcal{C}^1 ([0, \infty)), \quad h > 0, \quad \max_L h(L) =: h_0 < +\infty.
\label{hyp:h}
\eeq
To ensure the instability of the trivial equilibrium $(0,0)$ and rule out population extinction, we assume
\begin{equation}
d_E d_L < h(0)(b_E-d_L).
\label{hyp:naturelle}
\end{equation}
We also assume, for the matter of simplification of later computations
\begin{equation}
b_E > d_L + d_E.
\label{hyp:params}
\end{equation}

For several mosquito species, it is actually possible to identify the biological parameters $\tau_L$, $\delta_A$, $\delta_E$, $\delta_L$ and the adult density at equilibrium on the field ({\it i.e} $A$ such that $\f{d}{dt} A =0$).
From the formula $L=A\frac{\delta_A}{\tau_L}$, we deduce larvae density at equilibrium on the field (this density is called $\Lbar$ throughout this paper).

We warn the reader about what we call ``equilibrium density on the field'' and about parameter values. We do not claim they can precisely reproduce population variations as observed in field experiments. We simply use rough estimation of their orders of magnitude so as to prove the concept of population oscillations due to density-dependent hatching rate. See paragraph \ref{discuss:nonlinearities} for additional comments.

This warning made, from now on we consider that parameters $b_E$, $d_E$, $d_L$,  adults and larvae density at equilibrium on the field are known; the competition parameter $c$ and the hatching function $h$ are unknown. 
The known parameters are set at a given place and temperature (see \cite{Density}, \cite{Temp}) and we work with a fixed temperature, so the previous biological parameters are fixed and time-independent.

Our general goal is thus to assert the possible range of remaining parameters $c$ and $h(L)$ depending on the qualitative properties of solutions.

\section{Study of the reduced model}
\label{sec:study}
\subsection{Basic properties, equilibria and their stability}

With the assumption \eqref{hyp:h} we know that solutions remain non-negative.
Furthermore, the trivial equilibrium $(0,0)$ is a steady state of \eqref{eq:S} and all the other steady states $(\Ebar,\Lbar)$ are determined by a non-linear relation on $\Lbar$
\begin{equation}
  \left\{ 
      \begin{aligned}
       \Ebar&= \frac{b_E \Lbar}{d_E+h(\Lbar)},\\
       c \Lbar & = b_E - d_L - \f{d_E b_E}{d_E + h(\Lbar)}.
      \end{aligned}
    \right.
    \label{eq:steady}
\end{equation}
We observe that solutions of \eqref{eq:steady} are positive if and only if $h(\Lbar) > \f{d_E d_L}{b_E-d_L}.$
In addition:
\begin{lemma} 
Assume \eqref{hyp:h} holds.
Then there is a constant $K > 0$ such that for all non-negative~$t$, $L(t) + E(t) \leq K$.
Moreover, there exists at least one positive steady state of \eqref{eq:S} if and only if
  \beq
 \min_{x \geq 0} \, \Big( c x + \f{d_E b_E}{d_E + h(x)} \Big) \leq b_E - d_L.
 \label{eq:sufcond}
 \eeq
Furthermore, all steady states $(\Ebar, \Lbar) \not= (0, 0)$ satisfy $
  0 < c \Lbar < b_E - d_L - \f{d_E b_E}{d_E + h_0}. $
\label{lem:SS}
\end{lemma}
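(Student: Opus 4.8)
The plan is to treat the three assertions separately, since they rest on different mechanisms.

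For the uniform bound I would work with the total mass $S := E + L$. Adding the two equations of \eqref{eq:S}, the nonlinear hatching terms $\pm h(L)E$ cancel exactly, leaving
\[
\frac{d}{dt} S = (b_E - d_L)\, L - c L^2 - d_E E .
\]
Since the text already guarantees that solutions stay non-negative, for any $\mu \in (0, d_E]$ one gets $S' + \mu S \le (b_E - d_L + \mu)\, L - c L^2 \le C$, where $C$ is the finite maximum over $L \ge 0$ of the downward parabola on the right (here $c>0$ is the competition coefficient). A Gronwall argument then yields $S(t) \le \max\{S(0),\, C/\mu\} =: K$ for all $t \ge 0$, which bounds $E$ and $L$ simultaneously. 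The only ingredient to invoke is positivity-preservation, which is granted.

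For the existence criterion I would reduce \eqref{eq:steady} to a single scalar equation: a positive equilibrium exists if and only if there is $x > 0$ solving $F(x) = b_E - d_L$, where $F(x) := c x + \frac{d_E b_E}{d_E + h(x)}$ (given such an $x$, the first line of \eqref{eq:steady} returns a positive $\Ebar$). Under \eqref{hyp:h}, $F$ is continuous, and since $h > 0$ keeps the second term bounded while $cx \to \infty$, $F$ is coercive, so it attains its minimum on $[0,\infty)$ at some finite $x^{\ast}$. The forward implication is immediate: a positive equilibrium forces $\min_{x \ge 0} F(x) \le F(\Lbar) = b_E - d_L$. For the converse, if $\min_{x \ge 0} F(x) \le b_E - d_L$ then $F(x^{\ast}) \le b_E - d_L$ while $F(x) \to +\infty$, so the intermediate value theorem produces a root in $[x^{\ast}, \infty)$; the subtle point is to ensure this root is strictly positive, i.e. to separate it from the trivial equilibrium, noting that $x = 0$ solves $F(x) = b_E - d_L$ exactly when $F(0) = b_E - d_L$, which corresponds precisely to $(\Ebar, \Lbar) = (0,0)$.

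The two-sided bound on a nontrivial equilibrium then follows directly from the second line of \eqref{eq:steady}, namely $c\Lbar = b_E - d_L - \frac{d_E b_E}{d_E + h(\Lbar)}$. On the one hand, $(\Ebar, \Lbar) \ne (0,0)$ together with $\Ebar = b_E \Lbar / (d_E + h(\Lbar))$ forces $\Lbar > 0$, hence $c\Lbar > 0$. On the other hand, $h(\Lbar) \le h_0$ gives $\frac{d_E b_E}{d_E + h(\Lbar)} \ge \frac{d_E b_E}{d_E + h_0}$, whence $c\Lbar \le b_E - d_L - \frac{d_E b_E}{d_E + h_0}$, the inequality being strict as soon as the saturation level $h_0$ is not attained at $\Lbar$. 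I expect the genuine obstacle to be the converse direction of the existence criterion: converting the scalar inequality $\min_{x \ge 0} F(x) \le b_E - d_L$ into an honestly positive root via coercivity and the intermediate value theorem, while carefully excluding the degenerate boundary case $x = 0$ that merely reproduces the trivial steady state.
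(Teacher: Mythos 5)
Your proof follows essentially the same route as the paper's: summing the two equations so the hatching terms cancel and absorbing $-d_E E$ to get a dissipative inequality for $E+L$, then reducing the steady-state question to the scalar function $F(x) = c x + \frac{d_E b_E}{d_E + h(x)}$ and concluding by continuity, coercivity and the intermediate value theorem (the paper's version is in fact terser than yours, e.g.\ it quotes $K = U_M/d_E$ without the $\max$ with $S(0)$ that you correctly include). The one obstacle you flag but leave open --- the degenerate case where the minimum of $F$ equals $b_E - d_L$ and is attained only at $x=0$ --- is ruled out by the paper's standing assumption \eqref{hyp:naturelle}, which is exactly equivalent to $F(0) < b_E - d_L$, so the root produced by the intermediate value theorem is automatically strictly positive.
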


For the first point, we do not use any property of $h$, but merely the fact that $c L^2 / L \to +\infty$ as $L \to +\infty$. Note that with estimates on $h$, more restrictive properties can be obtained, in the sense that one could construct strictly smaller positively stable and attractive sets.

\begin{proof}
We notice that
\[
	\f{d}{dt} \big( E + L \big) = b_E L - d_E E - d_L L - c L^2 \leq -d_E (E + L) + U_M,
\]
where $U_M := \f{( b_E + d_E - d_L)^2}{4 c}$ is the maximum of $L \mapsto (b_E + d_E - d_L)L - c L^2$.
Consequently the claim holds with $K = U_M / d_E$.

 Let 
 \[
  f(x) = c x + \f{d_E b_E}{d_E + h(x)} - (b_E - d_L).
 \]
 Then $\Lbar$ defines a steady state of \eqref{eq:S} if and only if $f(\Lbar) = 0$, by \eqref{eq:steady}.

 Continuity of $f$ yields the conclusion since $h_0 = \max h$.
 \end{proof}

From now on we always assume that \eqref{eq:sufcond} holds, so that there exists at least one positive steady state of \eqref{eq:S}.
Then we analyze the stability of those steady states. 
\begin{lemma}\label{condh0}
The steady state $(0, 0)$ is unstable (locally linearly) if and only if \eqref{hyp:naturelle} holds.

A non-trivial steady state $(\Ebar,\Lbar)$ of \eqref{eq:S} is unstable (locally linearly) if and only if
either
\label{lemmeinstable}  
\beq
h'(\Lbar)\Ebar-d_L-2c\Lbar-d_E-h(\Lbar) > 0,
\label{eq:unstable1}
\eeq
or 
\beq
c d_E \Lbar - d_E h'(\Lbar) \Ebar + c \Lbar h(\Lbar) < 0
\quad \text{ and } \quad
 h'(\Lbar)\Ebar-d_L-2c\Lbar-d_E-h(\Lbar) \leq 0.
 \label{eq:unstable2}
\eeq
\end{lemma}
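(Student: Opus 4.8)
The plan is to linearize \eqref{eq:S} at each equilibrium and read off stability from the trace and determinant of the Jacobian, using the Routh--Hurwitz criterion for planar systems: an equilibrium is linearly stable exactly when the Jacobian has negative trace and positive determinant, hence linearly unstable exactly when the trace is positive or the determinant is negative. Writing $F_1(E,L)=b_E L-d_E E-h(L)E$ and $F_2(E,L)=h(L)E-d_L L-cL^2$, I would first record the Jacobian at an arbitrary point,
\[
J=\begin{pmatrix} -d_E-h(L) & b_E-h'(L)E\\ h(L) & h'(L)E-d_L-2cL\end{pmatrix}.
\]

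For the trivial equilibrium I would evaluate $J$ at $(0,0)$. Its trace $-d_E-h(0)-d_L$ is always negative, so the only possible source of instability is a negative determinant; since $\det J=d_E d_L-h(0)(b_E-d_L)$, the condition $\det J<0$ is precisely \eqref{hyp:naturelle}, which gives the first statement.

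For a nontrivial equilibrium $(\Ebar,\Lbar)$ the trace of $J$ is $h'(\Lbar)\Ebar-d_L-2c\Lbar-d_E-h(\Lbar)$, which is exactly the left-hand side of \eqref{eq:unstable1}; so positivity of the trace yields case \eqref{eq:unstable1}. The substantive step is to simplify $\det J$ using the steady-state relations \eqref{eq:steady}. Expanding the determinant, the two terms carrying $h(\Lbar)h'(\Lbar)\Ebar$ cancel and one is left with $\det J=-d_E h'(\Lbar)\Ebar+(d_E+h(\Lbar))(d_L+2c\Lbar)-b_E h(\Lbar)$. From \eqref{eq:steady} one derives the identity $b_E h(\Lbar)=(d_E+h(\Lbar))(d_L+c\Lbar)$; indeed $(d_E+h(\Lbar))\Ebar=b_E\Lbar$ and $h(\Lbar)\Ebar=\Lbar(d_L+c\Lbar)$, and dividing these gives the claim. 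Substituting it collapses the expression to
\[
\det J = c d_E\Lbar-d_E h'(\Lbar)\Ebar+c\Lbar\, h(\Lbar),
\]
so that $\det J<0$ is exactly the first inequality in \eqref{eq:unstable2}.

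Finally I would assemble the cases: by Routh--Hurwitz, $(\Ebar,\Lbar)$ is unstable if and only if $\tr J>0$ or $\det J<0$. The first disjunct is \eqref{eq:unstable1}; when $\tr J\le 0$ the only remaining source of instability is $\det J<0$, which together with $\tr J\le 0$ is condition \eqref{eq:unstable2}, and the two cases are disjoint by construction. I expect the determinant computation to be the main obstacle, since the cancellation and the resulting compact form rely on combining both steady-state relations in \eqref{eq:steady} in just the right way; the trace identity and the final stability bookkeeping are then immediate.
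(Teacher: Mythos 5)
Your proposal is correct and follows essentially the same route as the paper: linearize, reduce to the sign conditions on $\tr(J)$ and $\det(J)$, and use the steady-state relations \eqref{eq:steady} to collapse the determinant to $c d_E\Lbar - d_E h'(\Lbar)\Ebar + c\Lbar h(\Lbar)$, exactly as in the paper's \eqref{eq:trdet}. The only cosmetic difference is that you invoke the planar Routh--Hurwitz criterion directly, whereas the paper reaches the same disjunction ($\tr>0$ or $\det<0$) by an explicit case analysis on the sign of the discriminant of the characteristic polynomial.
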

\begin{proof}
We divide the proof into three steps.

Firstly we linearize system \eqref{eq:S} around a steady state $(\Ebar,\Lbar)$. Setting $E=\Ebar + e + \ldots$ and 
$L=\Lbar+\ell + \ldots$, we find
\begin{equation*}
  \left\{ 
      \begin{aligned}
       \frac{d}{dt} e&= b_E \ell - d_E e -h(\Lbar) e -h'(\Lbar) \Ebar \ell ,\\
        \f{d}{dt}  \ell & =h(\Lbar) e + h'(\Lbar)\Ebar \ell - d_L \ell - 2 c \Lbar \ell.
      \end{aligned}
    \right.
\end{equation*}
The eigenvalues $\lambda$ of the above linear system are given by the determinant
$$
\left| \begin{array}{cc}
-d_E - h(\Lbar) - \lambda   &  b_E - h'(\Lbar) \Ebar  \\[10pt]
h(\Lbar) & h'(\Lbar) \Ebar - d_L - 2c\Lbar - \lambda
\end{array}\right| = 0.
$$
After straightforward computations, we obtain:
\begin{equation}\label{eq:lambda}
\lambda^2 - \lambda \Big(h'(\Lbar)\Ebar - d_L -2c\Lbar - d_E - h(\Lbar)\Big)
+ d_E\big( d_L + 2c \Lbar -h'(\Lbar)\Ebar \big) + h(\Lbar) \big(d_L +2 c \Lbar -b_E\big) = 0.
\end{equation}

Secondly we look at the trivial steady-state.
Taking $\Ebar=\Lbar=0$ in equation \eqref{eq:lambda}, we obtain:
\begin{equation}
P(\lambda):=\lambda^2 + \lambda (d_L+d_E+h(0)) + d_Ed_L + h(0)(d_L-b_E) = 0.
\label{eq:lambda2}
\end{equation}

We are looking for the condition such that $(0, 0)$ is linearly unstable (we are interested in the conditions when the mosquito population does not tend to zero in nature). 
In other words, we expect that the polynomial $P$ has a root with positive real part. Since the first order coefficient is positive we end up with condition \eqref{hyp:naturelle} and the first point of the lemma is proved.

Finally we consider non-trivial steady states.
We rewrite \eqref{eq:lambda} as
\begin{equation*}
\lambda^2 - \tr(A)\lambda + \det(A) = 0,
\end{equation*}
where $A$ is the Jacobian matrix of the linearized system \eqref{eq:S}. Using \eqref{eq:steady} we find
\[
 d_E\big( d_L + 2c \Lbar -h'(\Lbar)\Ebar \big) + h(\Lbar) \big(d_L +2 c \Lbar -b_E\big) = 
 c d_E \Lbar - d_E h'(\Lbar) \Ebar + c \Lbar h(\Lbar),
\]
and thus
\beq
\bepa
\tr(A) = h'(\Lbar)\Ebar - d_L -2c\Lbar - d_E - h(\Lbar),
\\[10pt]
\det(A)  = c d_E \Lbar - d_E h'(\Lbar) \Ebar + c \Lbar h(\Lbar).
\eepa
\label{eq:trdet}
\eeq
The discriminant $\Delta$ of this polynomial is $\Delta =\big(\tr(A)\big)^2-4 \det(A)$ and the steady state is unstable if and only if there exists a root with positive real part.

There are two cases:
If $\Delta<0$ then the real part of the roots is $\frac{\tr(A)}{2}$.
The steady state is unstable if and only if $\tr(A)>0$.

If $\Delta\geq 0$ then the bigger root is $\frac{\tr(A) + \sqrt{\Delta}}{2}$.
Hence the steady state is unstable if and only if $\tr(A) > - \sqrt{\Delta}$.
This is true if and only if either $\tr(A) > 0$ or if $\det(A) < 0$ \text{ and } $\tr(A) \leq 0$.

\end{proof}

\begin{remark}
There is a link with the basic offspring number $Q_0$ (defined in \cite{DHM}).
This dimensionless number is the average number of offspring generated by a single fertilized mosquito:
from the method in \cite{VdDW}, we can compute $Q_0=\df{b_E h(0)}{d_L (d_E + h(0))}$. 

We remark that the first statement in Lemma~\ref{condh0} boils down to the classical property: trivial equilibrium point is unstable if and only if $Q_0 > 1$.
\end{remark}

\begin{remark}
As in nature we can observe oscillations of eggs and larvae density \cite{art:osc}, we pay attention in this work to oscillations around the positive steady states described in Lemma \ref{lemmeinstable}.
We show in Section~\ref{lll} that these solutions exhibit oscillations, by applying the Hopf bifurcation theorem. This behavior occurs only if the non-trivial steady state is unstable.
\end{remark}

For the sake of conciseness we define the following functions:
\begin{equation}
\left\{
	\begin{aligned}
T(k)&=\f{1}{\Lbar} \big( 2 k + \f{k + d_E}{b_E} (k+d_E - d_L) \big),\\
D(k)&=\f{1}{\Lbar} \f{k + d_E}{b_E d_E} \big( k (b_E - d_L) - d_E d_L \big).
\end{aligned}
	\right.
	\label{bkgk}
\end{equation}
We can rephrase Lemma \ref{lemmeinstable} into: Let $(k, k') = (h(\Lbar), h'(\Lbar))$ at some equilibrium $\Lbar$.
The state $(\Ebar, \Lbar)$ is unstable if and only if either $k' > T(k)$ or $T(k) \geq k' > D(k)$.
Thanks to \eqref{hyp:params} we can define
\begin{eqnarray}\label{eq:k+}
k_{\pm} :=\frac{d_E(b_E+2d_E+d_L) \pm \sqrt {4d_E^3(b_E-d_E-d_L)+d_E^2(b_E+2d_E+d_L)^2}}{2(b_E-d_E-d_L)}.
\end{eqnarray}

\begin{lemma}\label{k+}
Assume \eqref{eq:sufcond} holds.
If $k > k_+$ then $T(k) < D(k)$, and if $k\in(0,k_+)$ then $T(k) > D(k)$.
\end{lemma}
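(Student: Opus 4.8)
The plan is to reduce the comparison of $T$ and $D$ to the sign of a single quadratic in $k$ whose roots turn out to be exactly $k_\pm$. Since $\Lbar > 0$ at any positive steady state (guaranteed by \eqref{eq:sufcond}) and $b_E d_E > 0$, the sign of $T(k) - D(k)$ coincides with the sign of
\[
q(k) := b_E d_E \Lbar \big( T(k) - D(k) \big).
\]
First I would expand $q$ explicitly. Using \eqref{bkgk}, the $\f{1}{\Lbar}$ factors cancel and $q(k) = 2 b_E d_E k + d_E(k+d_E)(k+d_E - d_L) - (k+d_E)\big(k(b_E-d_L) - d_E d_L\big)$. Multiplying out and collecting powers of $k$, the two $d_E^2 d_L$ contributions cancel in the constant term and the linear coefficient simplifies, yielding the explicit quadratic
\[
q(k) = -(b_E - d_E - d_L)\, k^2 + d_E (b_E + 2 d_E + d_L)\, k + d_E^3.
\]

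Next, assumption \eqref{hyp:params} gives $b_E - d_E - d_L > 0$, so $q$ is a downward-opening parabola. Solving $q(k) = 0$ by the quadratic formula yields precisely the two roots $k_\pm$ of \eqref{eq:k+}: the discriminant equals $d_E^2(b_E+2d_E+d_L)^2 + 4 d_E^3(b_E - d_E - d_L)$, matching the expression under the square root, while dividing by the negative leading coefficient flips the sign so that the ``$+$'' branch is the larger root $k_+$.

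The key observation is then that $q(0) = d_E^3 > 0$: since $q$ opens downward and is positive at the origin, $0$ lies strictly between the two roots, hence $k_- < 0 < k_+$. Consequently $q > 0$ on the interval $(k_-, k_+) \supset (0, k_+)$ and $q < 0$ on $(k_+, \infty)$. Translating back through the positive factor $b_E d_E \Lbar$, this gives $T(k) > D(k)$ for $k \in (0, k_+)$ and $T(k) < D(k)$ for $k > k_+$, which is exactly the claim.

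I expect the only real obstacle to be the bookkeeping in the expansion producing $q$: one must verify carefully that the constant term collapses to $d_E^3$ and that the linear coefficient reduces to $d_E(b_E + 2 d_E + d_L)$, since these are precisely the coefficients that must reproduce the definition \eqref{eq:k+} of $k_\pm$. Once the quadratic is pinned down correctly, the remaining sign analysis is immediate from $q(0) > 0$ together with the downward concavity, and no further properties of $h$ or of $\Lbar$ are needed.
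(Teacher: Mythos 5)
Your proof is correct and follows essentially the same route as the paper: both reduce $T(k)-D(k)$ to the sign of the quadratic $-(b_E-d_E-d_L)k^2+d_E(b_E+2d_E+d_L)k+d_E^3$, identify its roots as $k_\pm$ using \eqref{hyp:params} to fix the sign of the leading coefficient, and conclude from $k_-<0<k_+$. Your explicit justification that $k_-<0$ via $q(0)=d_E^3>0$ is a small tidying of a step the paper merely asserts, but the argument is the same.
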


\begin{proof}
We are looking for the $k > 0$ such that $T(k) > D(k)$, that is also written from \eqref{bkgk}
\begin{eqnarray*}
k^2(b_E-d_E-d_L) - k d_E(b_E+2d_E+d_L)-d_E^3 < 0.
\end{eqnarray*}
Recalling that $b_E > d_E + d_L$ by \eqref{eq:sufcond}, the discriminant is:
\begin{eqnarray*}
\Delta=d_E^2(b_E+2d_E+d_L)^2+4d_E^3(b_E-d_E-d_L) > 0.
\end{eqnarray*}
The roots are exactly $k_{\pm}$, so the polynomial is negative when $k$ $\in $ $(k_-, k_+)$.

We note that $k_-<0$, so $T(k)<D(k)$ if and only if $k > k_+$, and $T(k) > D(k)$ if and only if $ k \in (k_-, k_+).$
Since $k>0$, this is equivalent to $k \in (0, k_+)$.
\end{proof}

Collecting our results on the equilibria we can state
\begin{proposition}
	Assume \eqref{eq:sufcond} holds, and let $(\Ebar, \Lbar)$ be a positive steady state of \eqref{eq:S}. Then $k_+ > \df{d_E d_L}{b_E - d_L}$ and necessarily $h(\Lbar) > \df{d_E d_L}{b_E - d_L}$.
	
	If $h(\Lbar) > k_+$, then $(\Ebar, \Lbar)$ is unstable if and only if $h'(\Lbar) > T\big( h(\Lbar) \big)$.
	If $\f{d_E d_L}{b_E - d_L} < h(\Lbar) < k_+$, then it is unstable if and only if $h'(\Lbar) > D \big( h(\Lbar) \big)$.
	
	Finally, the eigenvalues of the linearized of \eqref{eq:S} at $(\Ebar, \Lbar)$ are complex conjugate and pure imaginary if and only if $h(\Lbar) > k_+$ and $h'(\Lbar) = T \big( h(\Lbar) \big)$.
	\label{prop:eigenvalues}   
\end{proposition}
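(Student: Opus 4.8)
The plan is to bundle the linear-stability characterization of Lemma~\ref{lemmeinstable} with the threshold dichotomy of Lemma~\ref{k+}, adding a single new algebraic estimate on $k_+$. Throughout I write $k=h(\Lbar)$, $k'=h'(\Lbar)$. Substituting \eqref{eq:steady} into the trace/determinant formulas \eqref{eq:trdet} and solving $\tr(A)=0$ and $\det(A)=0$ for $k'$ recovers precisely the functions $T$ and $D$ of \eqref{bkgk}; since the coefficient of $k'$ in $\tr(A)$ is $\Ebar>0$ and in $\det(A)$ is $-d_E\Ebar<0$, one has $\tr(A)>0\Leftrightarrow k'>T(k)$ and $\det(A)<0\Leftrightarrow k'>D(k)$. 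This is exactly the rephrasing of Lemma~\ref{lemmeinstable} already recorded, namely instability $\Leftrightarrow k'>T(k)$ or $T(k)\geq k'>D(k)$.

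The one genuinely new point is the inequality $k_+>\f{d_E d_L}{b_E-d_L}$. The key observation is that $k_0:=\f{d_E d_L}{b_E-d_L}$ is exactly the value where the factor $k(b_E-d_L)-d_E d_L$ in $D$ vanishes, so $D(k_0)=0$. From the proof of Lemma~\ref{k+}, $k_+$ is the unique positive root of $Q(k):=k^2(b_E-d_E-d_L)-k\,d_E(b_E+2d_E+d_L)-d_E^3$, which is negative on $(0,k_+)$, and $Q(k)<0\Leftrightarrow T(k)>D(k)$. Hence $k_0<k_+$ will follow once I check $T(k_0)>D(k_0)=0$, i.e.\ $T(k_0)>0$. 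Using $k_0+d_E=\f{d_E b_E}{b_E-d_L}$, a direct substitution reduces $T(k_0)>0$ to the positivity of $b_E(d_E+d_L)-d_L^2$, which holds because $b_E>d_E+d_L$ by \eqref{hyp:params}. The accompanying necessary condition $h(\Lbar)>k_0$ for a positive steady state is simply the positivity remark recorded just after \eqref{eq:steady}.

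The two instability criteria then follow by case analysis on where $k=h(\Lbar)$ sits relative to $k_+$. If $h(\Lbar)>k_+$, Lemma~\ref{k+} gives $T(k)<D(k)$, so the band $T(k)\geq k'>D(k)$ is empty and instability collapses to $k'>T(k)$. If $k_0<h(\Lbar)<k_+$, Lemma~\ref{k+} gives $T(k)>D(k)$, so $\{k'>T(k)\}\cup\{T(k)\geq k'>D(k)\}=\{k'>D(k)\}$, i.e.\ instability $\Leftrightarrow k'>D(k)$. For the pure-imaginary statement, the eigenvalues solve $\lambda^2-\tr(A)\lambda+\det(A)=0$ and are complex conjugate and purely imaginary (and nonzero) exactly when $\tr(A)=0$ and $\det(A)>0$; by the correspondence above this reads $k'=T(k)$ together with $k'<D(k)$, forcing $T(k)<D(k)$, which by Lemma~\ref{k+} is equivalent to $h(\Lbar)>k_+$. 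Thus the eigenvalues are pure imaginary iff $h(\Lbar)>k_+$ and $h'(\Lbar)=T\big(h(\Lbar)\big)$.

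The only real computation is the sign check $k_+>k_0$, and once the identity $D(k_0)=0$ is noticed it collapses to the elementary inequality $b_E(d_E+d_L)>d_L^2$ controlled by \eqref{hyp:params}; everything else is bookkeeping gluing Lemmas~\ref{lemmeinstable} and~\ref{k+}. The subtlety I would be careful to state is the \emph{strict} inequality $\det(A)>0$ (not merely $\geq 0$) needed for genuinely nonzero imaginary eigenvalues: this is exactly the strict dichotomy $T(k)<D(k)$ on $(k_+,\infty)$ provided by Lemma~\ref{k+}, which rules out the degenerate double root $\lambda=0$ at $\det(A)=0$.
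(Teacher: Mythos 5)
Your proof is correct, and the overall skeleton (gluing the instability characterization of Lemma~\ref{lemmeinstable} to the threshold dichotomy of Lemma~\ref{k+}, then reading off the pure-imaginary case from $\tr(A)=0$, $\det(A)>0$) is the same as the paper's. The one place where you genuinely diverge is the key inequality $k_+>\f{d_E d_L}{b_E-d_L}$. The paper proves it by rewriting it as
\[
\f{b_E - d_L}{b_E - d_L - d_E}\Big(b_E + d_L + 2 d_E + \sqrt{(b_E + 2 d_E + d_L)^2 + 4 d_E (b_E - d_E - d_L)}\Big) > 2 d_L
\]
and bounding the left-hand side from below term by term, i.e.\ by direct manipulation of the closed-form radical expression for $k_+$. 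You instead never touch the radical: you note that $k_0:=\f{d_E d_L}{b_E-d_L}$ is exactly the zero of $D$, so $k_0<k_+$ reduces via the sign equivalence $T(k)>D(k)\Leftrightarrow Q(k)<0$ on $(k_-,k_+)$ (established inside the proof of Lemma~\ref{k+}) to the single check $T(k_0)>0$, which after the substitution $k_0+d_E=\f{d_E b_E}{b_E-d_L}$ collapses to $b_E(d_E+d_L)>d_L^2$. I verified this computation; it is right, and it is arguably cleaner and more structural than the paper's, since it explains \emph{why} the inequality holds ($k_0$ sits where $D$ changes sign while $T$ is still positive) rather than just certifying it. Your explicit remark that $\det(A)>0$ must be strict to exclude the degenerate double root at $\lambda=0$ when $k=k_+$ is a point the paper leaves implicit; it is a worthwhile clarification of why the pure-imaginary condition requires $h(\Lbar)>k_+$ strictly.
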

\begin{proof}
	This is a direct consequence of the previous calculations, except for
	\beq
	k_+ = \frac{d_E(b_E+2d_E+d_L) + \sqrt {4d_E^3(b_E-d_E-d_L)+d_E^2(b_E+2d_E+d_L)^2}}{2(b_E-d_E-d_L)} > \f{d_E d_L}{b_E - d_L}.
	\label{hyp:k+}
	\eeq
	Inequality \eqref{hyp:k+} is equivalent to
	\[
		\f{b_E - d_L}{b_E - d_L - d_E} \big(b_E + d_L + 2 d_E  + \sqrt{ (b_E + 2 d_E + d_L)^2 + 4 d_E (b_E - d_E - d_L)}\big) > 2 d_L.
	\]
This inequality holds because $b_E > d_L$ (thanks to \eqref{hyp:naturelle}).
Indeed,
\begin{align*}
&\f{b_E - d_L}{b_E - d_L - d_E} \big(b_E + d_L + 2 d_E  + \sqrt{ (b_E + 2 d_E + d_L)^2 + 4 d_E (b_E - d_E - d_L)}\big) 
\\
&> \big(b_E + d_L + 2 d_E  + \sqrt{ (b_E + 2 d_E + d_L)^2 + 4 d_E (b_E - d_E - d_L)}\big) \\
&> b_E + d_L \\
&> 2 d_L.
\end{align*}

	Then, setting $k = h(\Lbar)$, $k' = h'(\Lbar)$ and using the notations \eqref{eq:trdet}, the eigenvalues of the linearized operator are roots of the polynomial
	\[
		P(\lambda) = \lambda^2 - \lambda \tr(A) + \det(A).
	\]
	Hence the roots are pure imaginary if and only if $\tr(A) = 0$ and $\det(A) > 0$.
	From the definition of $T,D$ in \eqref{bkgk}, $\tr(A) = 0$ if and only if $k' = T(k)$. As $\det(A) > 0$ if and only if $k' < D(k)$, by Lemma~\ref{k+} this holds whenever $k > k_+$.
	
\end{proof}

\subsection{Discussion on the nonlinearities and the equilibrium values}
\label{discuss:nonlinearities}

We discuss in this paragraph the nonlinearities of system \eqref{eq:S}, and the role they play.

First we justify the use of a competition term. Solutions of \eqref{eq:S} are bounded (Lemma \ref{lem:SS}), but this holds only thanks to the nonlinear competition term $-c L^2$ in the equation describing the larvae dynamics.
More generally, any competition term $\phi(L)$, as in Section \ref{sec:reduction} such that $\phi(L) \to +\infty$ as $L \to +\infty$ yields the same result.
However, in the absence of such a competition, {\it a priori} bound on the solutions cannot be obtained, and no phenomenon keeps the population finite. For {\it Aedes} mosquitoes, the amount of available food in the breeding sites is an actual resource limitation that can trigger massive death of larvae if the amount of food per larva drops down too low (see \cite{Azn.Model}).
Therefore, we choose the simplest ({\it i.e.} quadratic) competition term to represent this competition for resources, and this ensures mathematically that solutions remain bounded.

Still, the competition parameter $c$ is extremely hard to assess from experimental data, and the values we use in this work should be handled with care. Usually, we fix a value for a positive equilibrium $\Lbar$ (which corresponds to choosing a type of breeding site).
Then, to each value $k = h(\Lbar)$ corresponds a non-necessarily unique $c(k)$ that makes $\Lbar$ an equilibrium of \eqref{eq:S}.
We treat $k$ as a free parameter in this study. 
It has been observed that the hatching rate indeed is extremely dispersed (see for instance the experimental results of \cite{Liv.Complex}), depending not only on the mosquito population and the environmental conditions but also on the egg batches themselves.
In future works expanding on the simplest oscillatory behavior we describe here, this variability in the actual value of $k$ should be taken into account if the model outputs are to be linked with experimental data.

Second, we discuss the hatching rate function $h$, which is crucial to our study. 
From now on, we require $h$ to be increasing. 
Indeed, Proposition \ref{prop:eigenvalues} shows that a steady state is always stable if $h$ is decreasing. Hence only an increasing $h$ can produce stable oscillations.
This mathematical assumption is supported by a simple biological hypothesis: larvae promote hatching.

An interesting feature of this intuition is that it can be subsequently extended to higher-dimensional systems such as \eqref{eq:SG}. In other words, it is not an artifact produced by considering only a 2-dimensional system but a robust qualitative property for these systems.

Indeed, for \eqref{eq:SG} the Jacobian matrix at any point $X = (E, L, P, A)$ reads 
\[
	J(X) = \begin{pmatrix}
			-\delta_E -h(L) & -E h'(L) & 0 & \beta_E \\
			h(L) & h'(L) E - \delta_L - \tau_L - 2 c L & 0 & 0 \\
			0 & \tau_L & -\delta_P - \tau_P & 0 \\
			0 & 0 & \tau_P & -\delta_A
		\end{pmatrix},
\]
hence if $h'(L) < 0$ then $J(X)$ is a Metzler matrix (it has positive extra-diagonal coefficients): the system is cooperative in this case.
Its characteristic polynomial may be written
\[
	P(\lambda) = (\lambda + A_1) (\lambda + A_2) (\lambda^2 + A_3 \lambda + A_4) - C,
\]
where $A_i, C > 0$. Being a Metzler matrix, $J$ has a real dominant eigenvalue. This matrix is stable if and only if this eigenvalue is negative; in other words, if and only if $P(0) > 0$ (since $P$ is increasing on $(0, +\infty)$). This condition reads
\[
	\delta_A (\delta_P + \tau_P) \big( (\delta_E + h(L)) (-h'(L) E + \tau_L +\delta_L + 2 c L) + E h(L) h'(L) \big) > \beta_E \tau_L \tau_P h(L).
\]
At equilibrium,
\[
\delta_L + \tau_L + c L = \f{h(L)}{h(L) + \delta_E} \f{\beta_E \tau_L \tau_P}{\delta_A (\delta_P + \tau_P)},
\]
therefore $P(0) > 0$ and thus any equilibrium where $h'<0$ must be (locally) stable, in system \eqref{eq:SG} as well as in system \eqref{eq:S}.
Adding ``neutral'' compartments keeps this property true and we can be confident in concluding that only a positive effect of larvae on hatching rate can destabilize the equilibrium and lead to (local) oscillations.

Some preliminary experiments ran by one of the authors seem to indicate that the larval impact on hatching may depend on larval development stage. Taking this into account would require model complexification. For instance, to model hatching impact discrepancies between first instar (positive) and last instar larvae (negative) we could add at least one compartment in \eqref{eq:S}.
However, we focus here on the simplest oscillations-producing mechanism.
The hatching function being increasing and bounded, it is reasonable to assume that $h$ is S-shaped and smooth, which is what we use in the rest of the paper.

Third, having discussed the two nonlinearities in \eqref{eq:S}, we are left with an important question about steady states: how to ensure that $\Lbar$ is actually unique? The second equation in \eqref{eq:steady} is also written
\beq
	h(\Lbar) = d_E \df{d_L + c \Lbar}{b_E - d_L - c \Lbar}.
	\label{eq:hss}
\eeq
The number of positive steady states depends strongly on function $h$. Being a S-shaped function does not guarantee uniqueness.
Therefore, it should be checked case by case except for some simple function families. We illustrate this fact in Appendix \ref{app:Hill} with Hill functions.
Still, we notice that $\kappa : L \mapsto d_E \df{d_L + c L}{b_E -d_L - c L}$ is convex on $(0, (b_E - d_L)/c)$ and goes to $+ \infty$ at $(b_E - d_L)/c$. So for instance uniqueness is guaranteed if \eqref{hyp:naturelle} holds and either, for all $L \in (0, (b_E - d_L)/c)$, $h'' (L) < 0$ or
\[
 h'(L) < \kappa'(L) = \df{d_E c b_E}{(b_E - d_L - c L)^2}.
\]

\section{The slow-fast oscillatory regime}
\label{sec:slow-fast}

In order to understand periodic solutions to \eqref{eq:S}, we examine a possible regime with a small parameter and then prove the oscillation result (Theorem \ref{thm:mainslowfast}). We have in mind here the analysis of the FitzHugh-Nagumo system. Numerical illustration, amplitude and period computation in some particular cases can be found in Appendix \ref{app:slowfast}.

\subsection{Parameter regime and main result}

Here, we assume that the egg stock is large, and its dynamics slow compared with the larvae stock. This identifies a small parameter leading to a slow-fast system.

More precisely, let $\e > 0$, $\eta : \R_+ \to \R_+$, and assume at first that all parameters (except for $h$) may depend on $\e$. We transform the variables $(E, L)$ from \eqref{eq:S} into $v_{\e} := \e E$ and $u_{\e} := \f{1}{\eta (\e)} L$. These new variables satisfy
\beq
\bepa
 \dot{v}_{\e} = \e \eta(\e) b_E u_{\e} - \big( d_E + h(\eta (\e) u_{\e}) \big) v_{\e} =: f_{\e} (u_{\e}, v_{\e}), 
 \\[10pt]
 \e\dot{u}_{\e} = \f{1}{\eta (\e)} h(\eta(\e) u_{\e}) v_{\e} - d_L \e u_{\e} - c \eta(\e) \e u^2_{\e} =: g_{\e} (u_{\e}, v_{\e}).
\eepa
\label{sys:uveps}
\eeq
We assume that parameters scale in such a way that the following limits exist, as $\e \to 0$:
\beq
\bepa
\feps \xrightarrow{L^{\infty}} f, \quad \geps \xrightarrow{L^{\infty}} g,
\\[10pt]
u_{\e} (t=0) = u_{\e}^0 \xrightarrow{} u_0, \quad v_{\e} (t=0) = v_{\e}^0 \xrightarrow{} v_0.
\eepa
\label{eq:fglimites}
\eeq
In addition, we assume that the zero set of $g$ is ``non-degenerate'' in the sense:
\beq
\forall v \geq 0, \quad \big\{ \sigma \geq 0, \, g(\sigma, v) = 0 \big\} \text{ does not contain any open interval.}
\label{ass:gnotvanish}
\eeq

We give below a simple proof of the following fact, in the spirit of Tikhonov's theorem on dynamical systems \cite{JP}.
\begin{theorem}
Consider system \eqref{sys:uveps} with $d_E, d_L, \Lbar$ and $h$ fixed, $b_E(\e) = \f{h(\Lbar) + d_E}{\e \Lbar}$, $\eta (\e) = \f{\Lbar^2}{h(\Lbar) - \e d_L \Lbar}$, for $\e$ small enough, and $c_{\e} = \f{1}{\e \eta (\e)}$. Let $\Ebar(\e) := 1 / \e$. Then $(\e \Ebar(\e), \f{1}{\eta(\e)}\Lbar) = (1, \f{h(\Lbar) - \e d_L \Lbar}{\Lbar})$ is a steady state of \eqref{sys:uveps} for all $\e > 0$ and \eqref{eq:fglimites} holds.

In addition, solutions of system \eqref{sys:uveps} along with any bounded initial data admits a limit as $\e \to 0$: there exists $u, v \in L^1 \cap L^{\infty} (0, T)$ for all $T > 0$ such that $v_{\e} \to v$ uniformly and $u_{\e} \to u$ in $L^p (0, T)$ for all $p< \infty$.

Moreover, if initial data $u^0_{\e}, v^0_{\e}$ are such that $\big( sgn(\geps), sgn(\feps) \big) (u^0_{\e},v^0_{\e})$ is constant for $\e$ small enough, then $(u, v)$ is periodic, $g(u(t), v(t)) = 0$ for almost every $t > 0$ and the trajectory is uniquely defined from $f$ and $g$ with $\f{dv}{dt} = f(u, v)$.
\label{thm:mainslowfast}
\end{theorem}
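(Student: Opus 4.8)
The plan is to treat the three assertions separately: the first by direct substitution, and the last two as a singular-perturbation (Tikhonov-type) limit anchored on a gradient/energy estimate for the fast variable. For the steady state and the limits \eqref{eq:fglimites}, I would simply plug the prescribed $b_E(\e)$, $\eta(\e)$, $c_\e$ into \eqref{sys:uveps}. At the candidate point one has $\eta(\e)\ueps = \Lbar$, hence $h(\eta(\e)\ueps) = h(\Lbar)$, and after cancellation both $\feps$ and $\geps$ vanish (the factor $h(\Lbar) - \e d_L \Lbar$ comes out of $\geps$ and annihilates it). The $L^\infty$ convergence $\feps \to f$, $\geps \to g$ then follows from $\e \eta(\e) b_E \to \Lbar(h(\Lbar)+d_E)/h(\Lbar)$, from $c_\e \eta(\e)\e = 1$ and $d_L \e \ueps \to 0$, together with uniform continuity of $h$ on compacts (using $h \in \mathcal{C}^1$), all restricted to the bounded region furnished by the a priori bounds below.

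For compactness I would first secure uniform-in-$\e$ bounds on $[0,T]$. Rewriting the bound of Lemma~\ref{lem:SS} in the variables $\veps = \e E$, $\ueps = L/\eta(\e)$ keeps $\veps$ bounded: the point is that $\e U_M/d_E$ stays $O(1)$ because $\e b_E = O(1)$ while $c_\e = O(1/\e)$. Then the superlinear $-\ueps^2$ term in the $\ueps$-equation (its coefficient $c_\e\eta(\e)\e$ equals exactly $1$) forces $\dot\ueps < 0$ once $\ueps$ is large, so $\ueps$ is bounded too. Since $\dot\veps = \feps$ is then bounded, $\veps$ is equicontinuous and Arzel\`a--Ascoli gives $\veps \to v$ uniformly along a subsequence, while $\ueps$, being bounded in $L^\infty$, converges weak-$*$ to some $u$.

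The core of the argument is to upgrade this weak convergence and to pin down $g(u,v)=0$. Here I would use that the fast field is a gradient in $u$: writing $\geps = -\p_u W_\e$ with $W_\e(u,v) = \f{u^3}{3} + \f{d_L \e u^2}{2} - \f{v}{\eta(\e)}\int_0^u h(\eta(\e)s)\,ds$, one gets $\f{d}{dt}W_\e(\ueps,\veps) = -\f1\e \geps^2 + \p_v W_\e\,\feps$. Integrating on $[0,T]$ and using that $W_\e$ and $\p_v W_\e\,\feps$ are bounded on the invariant region yields $\int_0^T \geps(\ueps,\veps)^2\,dt \le C\e \to 0$. With $\geps \to g$ in $L^\infty$ and $\veps \to v$ uniformly, this gives $g(\ueps, v) \to 0$ in $L^2(0,T)$, so the Young measure generated by $\ueps$ is, for a.e.\ $t$, supported on $\{\sigma \ge 0 : g(\sigma, v(t)) = 0\}$. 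Assumption \eqref{ass:gnotvanish} makes this set have empty interior, and the hypothesis that $(\sgn\geps,\sgn\feps)(u^0_\e,v^0_\e)$ is constant fixes the trajectory to a definite stable branch ($\p_u g < 0$) away from the fast transitions; since each transition keeps $|\geps|$ bounded below over a time interval of length $O(\e)$, the set of jump times has measure $O(\e)$, and on its complement $\ueps$ tracks the branch $U(\veps) \to U(v)$. Hence $\ueps \to u$ in every $L^p(0,T)$ with $g(u,v) = 0$ a.e. Finally, on each stable branch the limit solves the scalar reduced ODE $\f{dv}{dt} = f(u,v)$ with $u = U(v)$, which is Lipschitz hence uniquely solvable; the jump rule at the folds being deterministic, the whole relaxation trajectory is uniquely determined, and the fixed sign pattern forces it to cycle between the two stable branches, giving periodicity.

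I expect the main obstacle to be precisely the passage from the energy bound to strong convergence and to the pointwise identity $g(u,v)=0$: because $g$ is nonlinear in the fast variable and its zero set may split into several branches, one must rule out oscillation of $\ueps$ between branches. This is exactly where the non-degeneracy \eqref{ass:gnotvanish} and the constant-sign hypothesis are indispensable — the former excludes continua of equilibria, the latter fixes the branch (hence the limit, independently of the subsequence) — while the gradient/energy estimate is the quantitative engine making the whole scheme work.
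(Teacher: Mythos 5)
Your first two steps coincide with the paper's: the steady state and the limits \eqref{eq:fglimites} by direct substitution, the uniform bounds plus Arzel\`a--Ascoli for $\veps$ (Lemma \ref{lim:trivbound}), and the estimate $\int_0^T \geps(\ueps,\veps)^2\,dt = O(\e)$. Your ``energy'' identity $\f{d}{dt}W_\e(\ueps,\veps) = -\f{1}{\e}\geps^2 + \p_v W_\e\, \feps$ is literally the paper's computation in Lemma \ref{lem:auxiliary} written with the antiderivative $W_\e(u,v) = -\int_0^u \geps(\sigma,v)\,d\sigma$, so up to that point the two arguments are identical.

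The genuine gap is in the passage from $\|\geps(\ueps,\veps)\|_{L^2} = O(\sqrt{\e})$ to strong $L^p$ convergence of $\ueps$. Knowing that the Young measure generated by $\ueps$ is supported, for a.e.\ $t$, on $\{\sigma \ge 0 : g(\sigma, v(t)) = 0\}$ does not give strong convergence: that set has empty interior under \eqref{ass:gnotvanish} but may contain several points, and the limit measure could a priori be a nontrivial mixture of Dirac masses on different branches, in which case $\ueps$ converges only weakly. Your attempt to exclude this (``the set of jump times has measure $O(\e)$'', ``$\ueps$ tracks the branch $U(\veps)$'') presupposes that $\ueps$ makes a controlled number of transitions and stays glued to a single stable branch between them; neither is established, and the branch-tracking claim is exactly the delicate local stability argument (via $\p_1 g \le -\alpha < 0$ and the quantity $\widehat{w}_\e = \widehat{u}_\e - \phi^{-1}(\widehat{v}_\e)$) that the paper carries out in the proof of Lemma \ref{lem:trajectories}. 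The paper avoids the Young-measure detour altogether with a cleaner device: it sets $\weps(t) = \int_0^{\ueps(t)} \geps^2(\sigma,\veps(t))\,d\sigma$, shows $\weps$ is uniformly bounded in $BV_{\mathrm{loc}}$ (using that $\f{1}{\e}\geps^2 \cdot \e\dot\ueps$ is bounded in $L^1$), extracts a strong $L^p$ limit $w$, and then inverts the map $B(t,u) = \int_0^u g^2(\sigma,v(t))\,d\sigma$ --- strictly increasing precisely because of \eqref{ass:gnotvanish} --- to get $\ueps = A(\cdot, B(\cdot,\ueps)) \to A(\cdot,w) = u$ strongly. You should either adopt this $BV$-compactness argument or make the transition-counting rigorous (e.g.\ by showing that each inter-branch traversal costs a fixed amount of $\int \geps^2/\e$, so their number stays bounded, and then invoking the local exponential attraction of the stable branches). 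Separately, your treatment of periodicity and of the initial layer is much more cursory than the paper's Lemma \ref{lem:trajectories} (shape of the nullcline $\phi$, sign of $f$ on it, the projection $\pi$, finiteness of $\tau_1$), but these are verifications rather than new ideas.
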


\begin{figure}[h!]
 \includegraphics[width=.5\linewidth]{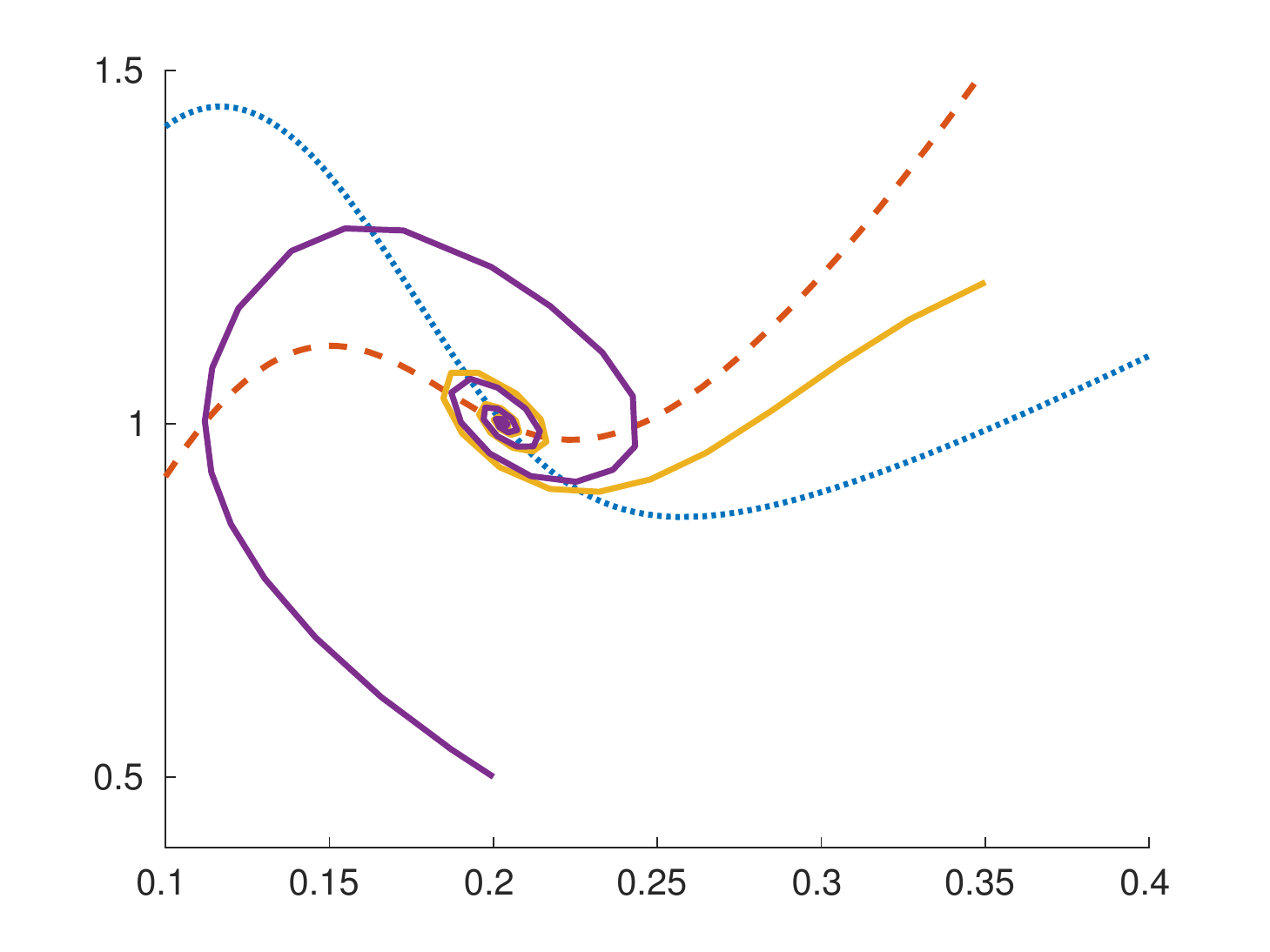}
 \includegraphics[width=.5\linewidth]{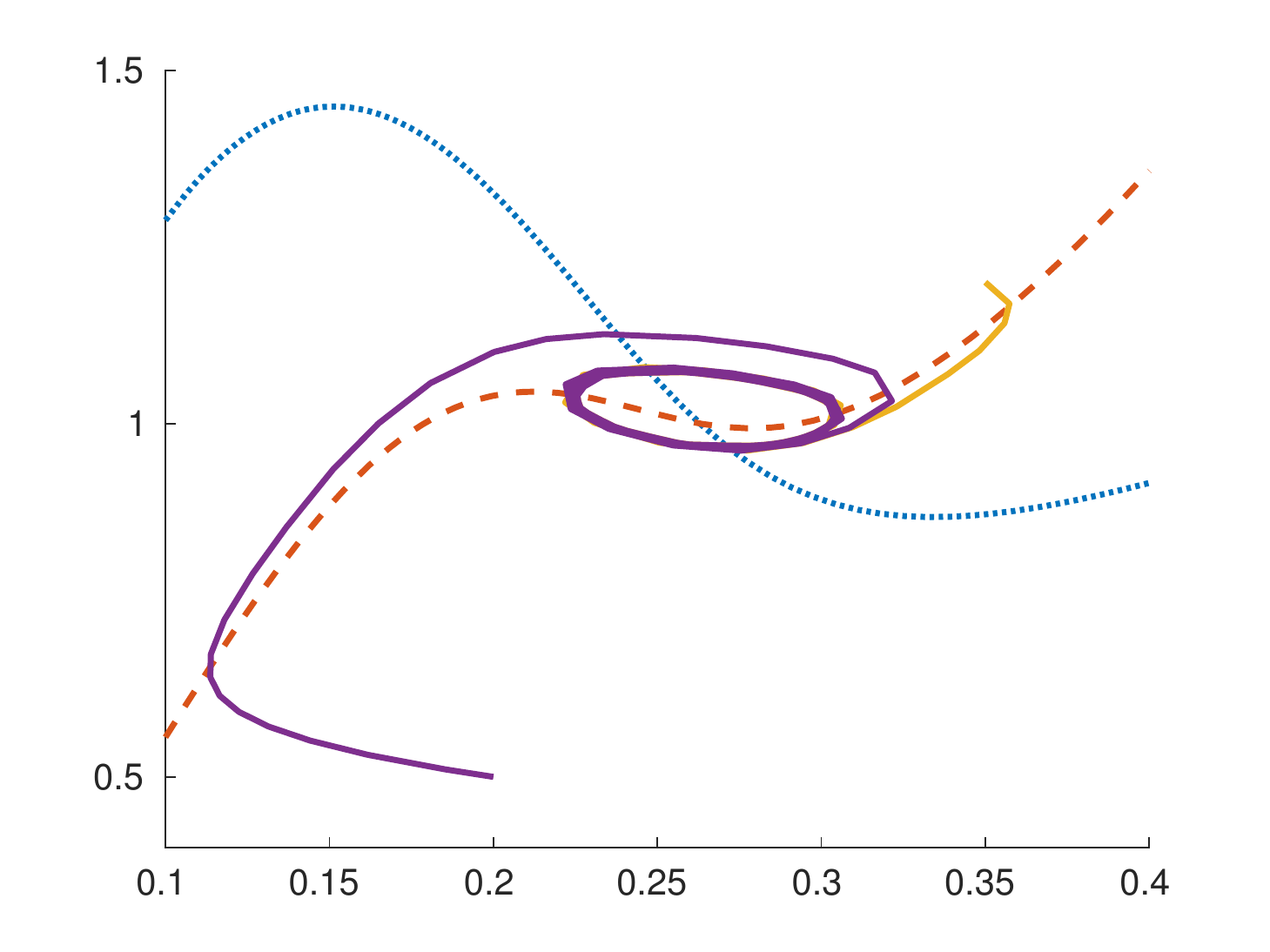}
 \includegraphics[width=.5\linewidth]{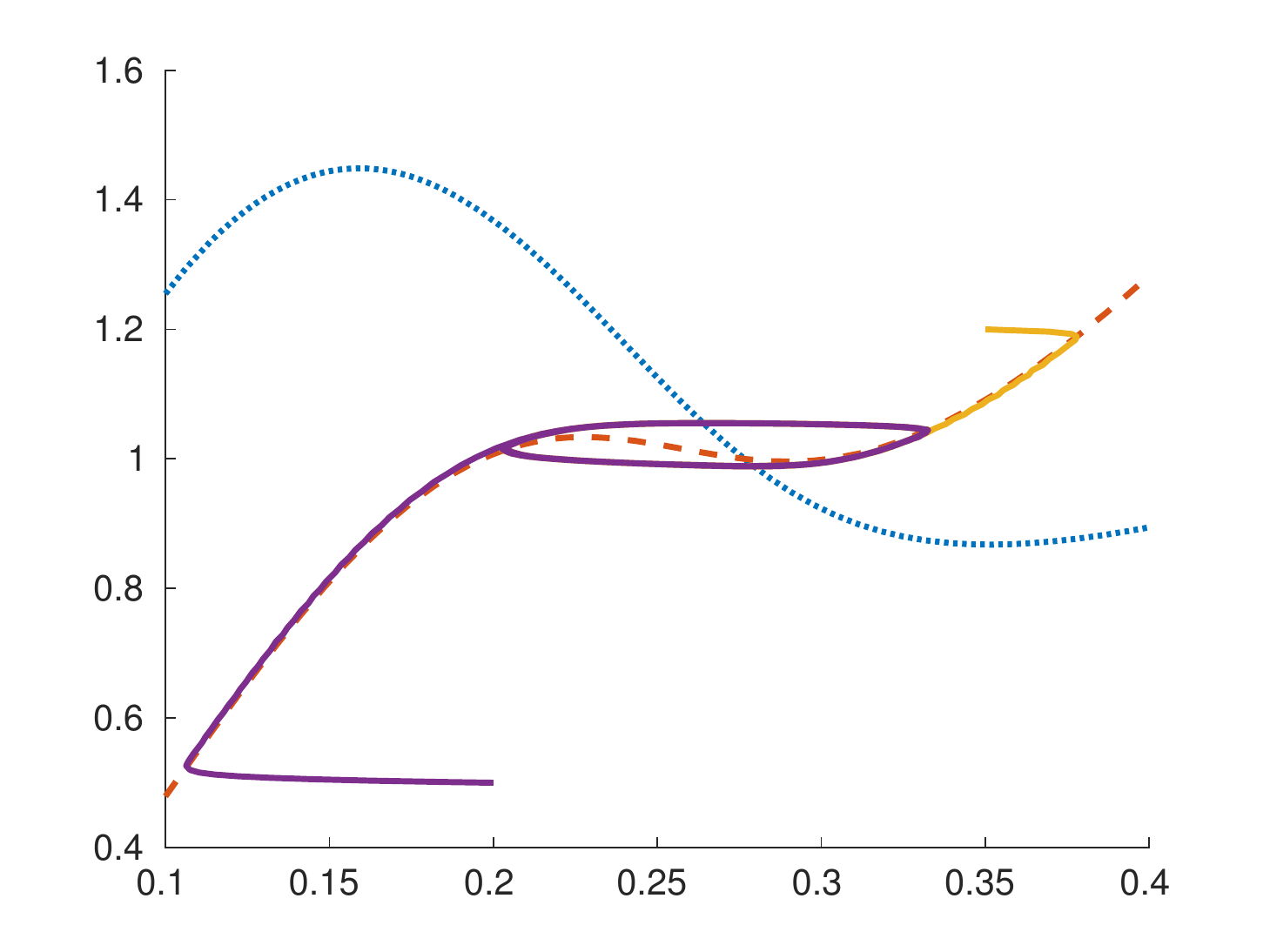}
 \includegraphics[width=.5\linewidth]{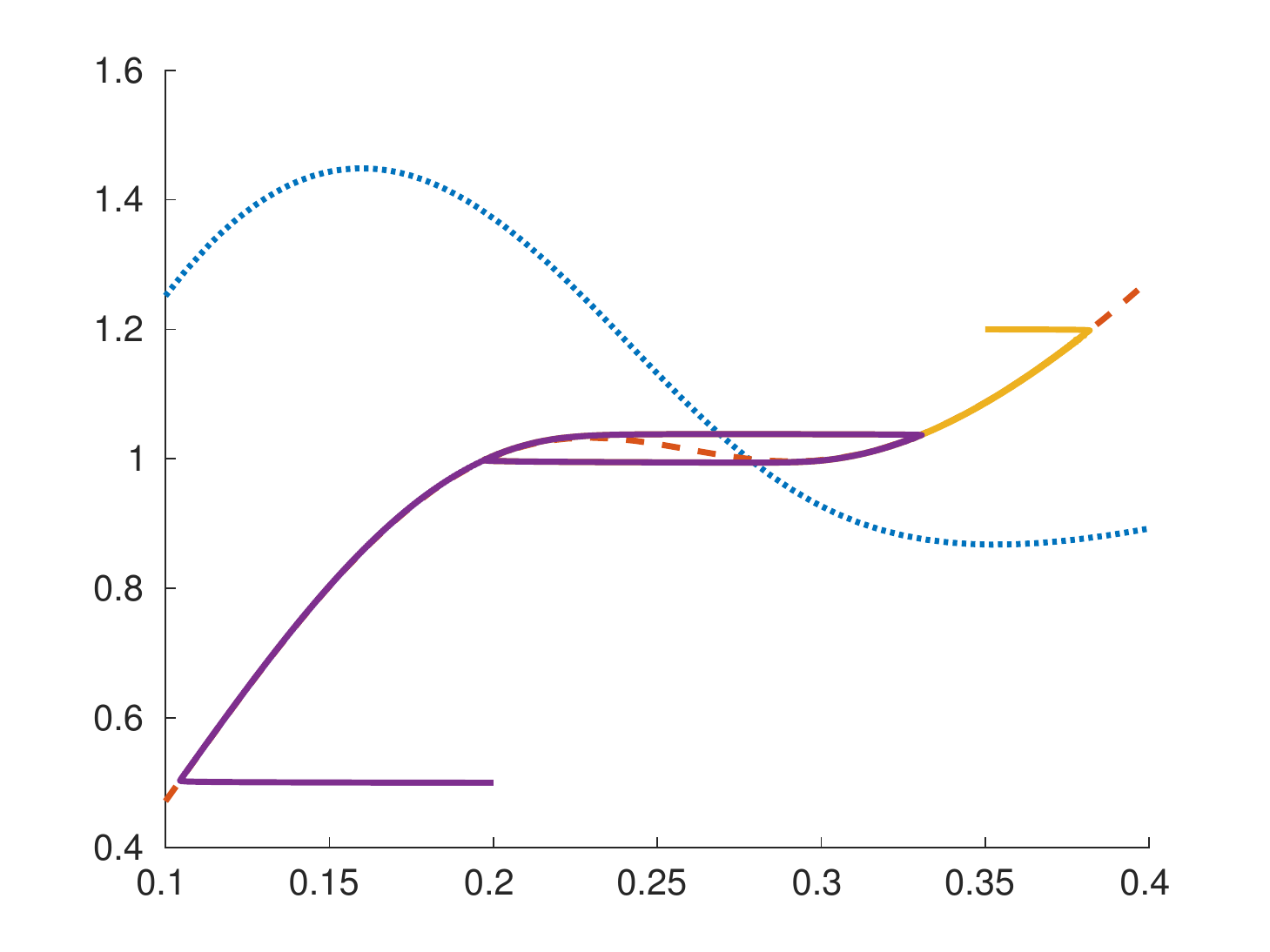}
 \caption{$u$ is in $x$-axis, $v$ in $y$-axis. Red dashed curves correspond to nullclines $\geps= 0$ ($\dot{\ueps} = 0$)
 and blue dotted curves to nullclines $\feps=0$ ($\dot{\veps} = 0$).
 The four figures correspond to decreasing values of $\e$ from top-left to bottom-right ($0.5$, $0.1$, $0.01$ and $0.001$). In yellow and purple, two trajectories $t \mapsto (\ueps(t), \veps(t))$ are shown, for two different initial conditions (respectively $(0.35, 1.2)$ and $(0.2, 0.5)$).}
 \label{fig:5}
\end{figure}

Figure \ref{fig:5} illustrates the slow-fast dynamics.
Before proving Theorem \ref{thm:mainslowfast}, we justify the particular scaling choices in its statement.
Non-trivial equilibrium $(\Ebar, \Lbar)$ of \eqref{eq:S} are given by \eqref{eq:steady}:
\[
d_L + c \Lbar = \f{h(\Lbar) b_E}{h(\Lbar) + d_E}, \quad \Ebar = \f{b_E \Lbar}{h(\Lbar) + d_E}.
\]
Thus in all generality (allowing all parameters to depend on $\e$), the scalings fit for our purpose ({\it i.e.} with $\Ebar (\e) = 1 /\e$) are exactly those for which $\e = \f{h (\Lbar (\e)) + d_E (\e)}{b_E (\e) \Lbar(\e)}$ and there exists $\eta (\e) = O(1)$ such that
\[
  d_L(\e) \big( h (\Lbar(\e)) + d_E (\e) \big) + \f{b_E (\e) \Lbar^2 (\e)}{\eta (\e)} = h (\Lbar(\e)) b_E (\e).
\]
It turns out that $\Lbar(\e) \big( \e(b_E - d_L) - \f{1}{\eta} \big) = d_E$.
Hence to guarantee $\eta(\e) = O(1)$ it is required that
\[
	b_E - d_L = O(1/\e).
\]
Therefore the scaling choice made in Theorem \ref{thm:mainslowfast} is in some sense ``generic''.

Note that for every possible parameter scaling we get a (possibly different) limit in \eqref{eq:fglimites}.
For instance, assuming $\e c_{\e}$ and $\e b_E (\e)$ have limits $1/\eta_0, \xi > 0$ respectively as $\e \to 0$ (this is the case with the scaling used in Theorem \ref{thm:mainslowfast}), we  choose $\eta (\e) = O(1)$ such that $c_{\e} \eta (\e) \e = 1$ and end up with
\[
 \bepa
 \dot{v} = \e \eta b_E u - \big(d_E + h(\eta u) \big) v =: \feps(u, v),
 \\[10pt]
 \e \dot{u} = \f{1}{\eta} h (\eta u ) v - d_L \e u - u^2 =: \geps(u, v).
 \eepa
\]
The limits $f$ and $g$ are given by
\beq
 f(u, v) = \eta_0 \xi u - \big( d_E + h(\eta_0 u) \big) v,
\quad
 g(u,v) = \f{1}{\eta_0} h(\eta_0 u) v - u^2.
\label{eq:fg}
\eeq

\subsection{Proof of the main result}
We proceed to the proof of Theorem \ref{thm:mainslowfast} in three steps.
First, scaled quantities $u_{\e}$ and $v_{\e}$ remain uniformly bounded independently of $\e$, as can be proved from direct computation using the bound $K$ from Lemma \ref{lem:SS}.
\begin{lemma}
 There exists $C > 0$ such that for all $\e > 0$ and $t > 0$,
 \[
  \lvert \ueps(t) \rvert, \, \lvert \veps(t) \rvert, \, \lvert \feps(\ueps(t), \veps(t)) \rvert, \, \lvert \geps(\ueps(t), \veps(t)) \rvert \, \leq C.
 \]
 \label{lim:trivbound}
\end{lemma}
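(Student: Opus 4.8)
The plan is to obtain the four bounds successively, in the order $\veps$, then $\ueps$, and finally $\feps,\geps$, using the \emph{a priori} estimate of Lemma~\ref{lem:SS} to control $\veps$ and the strong quadratic competition term to control $\ueps$. Throughout, I use that solutions stay non-negative, so that $\veps = \e E \ge 0$ and $\ueps = L/\eta(\e) \ge 0$, and that the scalings of Theorem~\ref{thm:mainslowfast} give $\eta(\e) \to \Lbar^2/h(\Lbar) > 0$, whence $\eta(\e) \ge \eta_- > 0$ uniformly for $\e$ small.

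First I would bound $\veps$. The differential inequality established in the proof of Lemma~\ref{lem:SS}, together with the boundedness of the initial data, yields $E(t)+L(t) \le \max(E^0+L^0, K_\e)$ with $K_\e = (b_E(\e)+d_E-d_L)^2/(4 c_\e d_E)$. A direct computation shows that $\e K_\e$ stays bounded as $\e \to 0$: since $\e/c_\e = \e^2\eta(\e)$ and $(b_E(\e)+d_E-d_L)^2 = O(1/\e^2)$, one gets $\e K_\e = (\e/c_\e)(b_E(\e)+d_E-d_L)^2/(4 d_E) \to (h(\Lbar)+d_E)^2/(4 d_E h(\Lbar))$. As $\e E^0 = v^0_\e$ and $\e L^0 = \e\eta(\e)u^0_\e$ are bounded, the estimate $\veps = \e E \le \e(E+L)$ gives a uniform bound $\veps \le C_v$. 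Note that the crude bound $L \le K_\e$ is useless for $\ueps$, since $K_\e/\eta(\e) = O(1/\e)$ --- this is precisely the difficulty.

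The heart of the argument, and the main obstacle, is bounding $\ueps$, which I would do not through the size of $L$ but through the scaled equation $\e\dot{\ueps} = \geps$. The scalings give $c_\e\eta(\e)\e = 1$, so
\[
\e\dot{\ueps} = \tfrac{1}{\eta(\e)} h(\eta(\e)\ueps)\veps - d_L \e\ueps - \ueps^2 \le \tfrac{h_0}{\eta_-}C_v - \ueps^2,
\]
using $h \le h_0$, $\veps \le C_v$, $\eta(\e) \ge \eta_-$ and $\ueps \ge 0$. Setting $C_0 := h_0 C_v/\eta_-$, the right-hand side is strictly negative once $\ueps > \sqrt{C_0}$, so a standard barrier (invariance) argument gives $\ueps(t) \le \max(u^0_\e, \sqrt{C_0})$ for all $t$; bounded initial data then yield $\ueps \le C_u$ uniformly in $\e$. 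The key point is that although $c_\e \to \infty$, the normalization $c_\e\eta(\e)\e = 1$ keeps the confining term $-\ueps^2$ intact in the limit, so it controls $\ueps$ independently of $\e$.

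Finally, with $\ueps \le C_u$ and $\veps \le C_v$ in hand, the bounds on $\feps$ and $\geps$ follow by inspection of their explicit formulas: in $\feps = \e\eta(\e) b_E \ueps - (d_E + h(\eta(\e)\ueps))\veps$ the coefficient $\e\eta(\e) b_E = \eta(\e)(h(\Lbar)+d_E)/\Lbar$ is $O(1)$ and all remaining factors are bounded, while the three terms of $\geps$ are controlled by $h_0 C_v/\eta_-$, $d_L \e C_u$ and $C_u^2$ respectively. Hence $|\feps|,|\geps| \le C$ uniformly in $\e$ and $t$, which completes the proof.
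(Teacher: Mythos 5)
Your proof is correct, and it follows the route the paper itself indicates (the paper gives no written proof, only the remark that the bounds follow ``from direct computation using the bound $K$ from Lemma~\ref{lem:SS}''): you use the dissipative estimate on $E+L$ to bound $\veps$, and then the scaled equation with its intact $-\ueps^2$ term to bound $\ueps$ via a barrier, which is exactly the extra step needed since $K_\e/\eta(\e)=O(1/\e)$ is useless on its own. Your computation that $\e K_\e$ stays bounded under the scalings of Theorem~\ref{thm:mainslowfast} is accurate, so the argument is a complete and correct filling-in of the paper's one-line justification.
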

Hence, up to extraction, $\veps$ converges to $v$ uniformly on compact sets $[0, T]$ by the Ascoli theorem.
Then, the convergence of an auxiliary quantity gives convergence of $\ueps$:
\begin{lemma}
For all $T > 0$.
 \beq
  \lVert \geps( \ueps, \veps) \rVert_{L^2 (0, T)} = O(\sqrt{\e}).
  \label{estim:sqrte}
 \eeq

 Moreover, there exists $u, v \in L^1 \cap L^{\infty}$ such that after extraction of a subsequence $\ueps \to u$ in $L^p(0, T)$ for all $1 \leq p<\infty$,
 as $\veps \to v$ uniformly.
 \label{lem:auxiliary}
\end{lemma}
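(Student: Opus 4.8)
The plan is to prove the three assertions in order: the $L^2$ smallness \eqref{estim:sqrte}, the uniform convergence of $\veps$, and finally the strong $L^p$ convergence of $\ueps$, the last of which I expect to carry the real difficulty.

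First, for \eqref{estim:sqrte} I would exploit the slow equation in the form $\geps = \e\,\dot{\ueps}$, so that $\geps^2 = \e\,\geps\,\dot{\ueps}$, and integrate the right-hand side using a primitive of $\geps$ in the fast variable. Setting $G_\e(u,v) := \int_0^u \geps(\sigma,v)\,d\sigma$, along any trajectory one has
\[ \frac{d}{dt}\,G_\e(\ueps,\veps) = \geps(\ueps,\veps)\,\dot{\ueps} + \partial_v G_\e(\ueps,\veps)\,\feps(\ueps,\veps). \]
Integrating over $(0,T)$ and multiplying by $\e$ gives
\[ \int_0^T \geps^2\,dt = \e\,\Big( G_\e(\ueps,\veps)\big|_0^T - \int_0^T \partial_v G_\e(\ueps,\veps)\,\feps(\ueps,\veps)\,dt \Big). \]
The crucial observation is that $G_\e$ and $\partial_v G_\e = \tfrac{1}{\eta(\e)}\int_0^u h(\eta(\e)\sigma)\,d\sigma$ are bounded uniformly in $\e$ on the invariant region, since $h \le h_0$, $\eta(\e)$ stays bounded away from $0$ and $\infty$, and $c_\e\,\eta(\e)\,\e = 1$; meanwhile $\feps$ is bounded by Lemma \ref{lim:trivbound}. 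Hence the bracket is $O(1)$ and the whole expression is $O(\e)$, which is exactly \eqref{estim:sqrte}.

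Second, the convergence of $\veps$ is soft. Because $\dot{\veps} = \feps$ is bounded uniformly in $\e$ (Lemma \ref{lim:trivbound}), the family $\veps$ is equibounded and equi-Lipschitz, so the Arzel\`a--Ascoli theorem yields a subsequence converging uniformly on every $[0,T]$ to a Lipschitz limit $v$, which therefore lies in $L^1 \cap L^\infty$.

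Finally, for $\ueps$ I would first extract a weak-$*$ limit $u$ in $L^\infty$ along a further subsequence. The estimate \eqref{estim:sqrte} gives $\geps(\ueps,\veps) \to 0$ in $L^2$; combining this with $\geps \to g$ uniformly, the uniform convergence $\veps \to v$, and the Lipschitz continuity of $g$ in $v$, one obtains $g(\ueps, v) \to 0$ in $L^2$. Thus any limit must concentrate on the nullcline $\{\sigma \geq 0 : g(\sigma, v(t)) = 0\}$ for almost every $t$; in Young-measure language, the measure $\nu_t$ generated by $\ueps$ is supported on this zero set. \emph{The main obstacle is to upgrade this to strong convergence.} The difficulty is structural: the zero set is generically multivalued --- the limiting nullcline is cubic-shaped, which is precisely the feature producing the oscillations --- so no soft functional-analytic argument forces $\nu_t$ to reduce to a single Dirac mass (indeed, equating the expressions for $h(\eta(\e)\ueps)\veps$ coming from the two equations only relates $u$ to the weak limit of $\ueps^2$, which is exactly the gap). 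To close it I would invoke the slow--fast dynamics: away from a set of times of vanishing measure the fast variable tracks one branch of the slow manifold, the transitions between branches occupying time intervals whose length tends to $0$ with $\e$, so that $\ueps(t)$ converges for almost every $t$ to a well-defined branch value, $\nu_t$ is a Dirac mass almost everywhere, and $\ueps \to u$ almost everywhere. Since $\ueps$ is uniformly bounded (Lemma \ref{lim:trivbound}), dominated convergence then promotes this to convergence in $L^p(0,T)$ for every $p < \infty$. This compactness of the fast variable is the heart of the result and the step I expect to be the hardest.
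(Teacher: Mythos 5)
Your first two steps are correct and essentially identical to the paper's: the estimate \eqref{estim:sqrte} is obtained there by exactly the computation you describe (writing $\geps^2/\e = \geps\dot{\ueps}$ as a total derivative of $\int_0^{\ueps}\geps(\sigma,\veps)\,d\sigma$ minus a bounded remainder involving $\feps$ and $\p_v\geps$), and the uniform convergence of $\veps$ is Arzel\`a--Ascoli in both cases.

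The gap is in your third step, and you have in fact located it yourself: after correctly observing that the Young measure generated by $\ueps$ must be supported on the (multivalued) nullcline, you ``close'' the argument by asserting that the fast variable tracks a single branch away from a set of times of vanishing measure, with transition intervals of length tending to $0$. That assertion is precisely the conclusion you are trying to prove restated in dynamical language; establishing it rigorously is the content of the \emph{later} Lemma \ref{lem:trajectories}, which requires the additional structural hypotheses (R.1)--(R.2) and (L.1)--(L.4) on the shape of the nullclines and on the sign of $f$ along them --- none of which are available (or assumed) at the stage of Lemma \ref{lem:auxiliary}, which is proved under the single non-degeneracy condition \eqref{ass:gnotvanish}. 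The paper's actual mechanism is different and sidesteps the multivaluedness entirely: it sets $B(t,u):=\int_0^u g^2(\sigma,v(t))\,d\sigma$, which by \eqref{ass:gnotvanish} is \emph{strictly increasing} in $u$ (an integral of a non-negative function whose zero set contains no interval), hence invertible via some $A(t,\cdot)$; it then studies the auxiliary quantity $\weps(t)=\int_0^{\ueps(t)}\geps^2(\sigma,\veps(t))\,d\sigma$, shows $\dot{\weps}=\f{1}{\e}\geps^2\cdot\e\dot{\ueps}+\feps\int_0^{\ueps}2\geps\,\p_v\geps$ is uniformly bounded in $L^1$ thanks to \eqref{estim:sqrte} and Lemma \ref{lim:trivbound}, deduces a uniform $BV_{\mathrm{loc}}$ bound and hence $L^1$ (then $L^p$) compactness of $\weps$, and finally recovers $\ueps = A(\cdot, B(\cdot,\ueps)) \to A(\cdot,w)=:u$ by continuity of $A$. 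In short: the compactness is transferred from $\ueps$ to the monotone reparametrization $B(t,\ueps)$, whose time derivative is controllable, rather than extracted from a branch-tracking analysis. Your outline would need either this device or the full machinery of Lemma \ref{lem:trajectories} (with its extra hypotheses) to become a proof.
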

\begin{proof}
Let $B(t, u) := \int_0^u g^2 (\sigma, v(t)) d\sigma$,
where $v$ is the limit of $\veps$ (obtained by the Ascoli theorem) and $g$ is the limit of $\geps$ (from \eqref{eq:fglimites}).
From~\eqref{ass:gnotvanish} we deduce that for all $t$, $u \mapsto B(t, u)$ is increasing. Hence there exists a {\it smooth} function $A(t, u)$ such that for all $t, u$, $A(t, B(t, u) ) = u.$

If there exists $w(t) \in L^p (0, T)$ for all $p < \infty$ and $T > 0$ such that
\beq
 \int_0^{\ueps(t)} \geps^2 (\sigma, \veps(t) ) d\sigma \xrightarrow[\e \to 0]{L^p (0, T)} w(t),
 \label{exists:limit}
\eeq
then defining $u(t) := A(t, w(t))$ we can conclude that $ \ueps = A\big( \cdot, \int_0^{\ueps} g^2 (\sigma, v) d\sigma \big)  \xrightarrow[\e \to 0]{L^p (0, T)} u = A(\cdot, w).$

Indeed, we notice that
\[
 \int_0^{\ueps(t)} \geps^2 (\sigma, \veps (t) ) d \sigma - \int_0^{\ueps(t)} g^2 (\sigma, \veps (t) ) d \sigma \to 0,
\]
and
\[
 \int_0^{\ueps(t)} g^2 (\sigma, \veps (t) ) d \sigma - \int_0^{\ueps(t)} g^2 (\sigma, v (t) ) d \sigma\to 0.
\]
Since $\ueps$ is uniformly bounded,
\[
 \big \lvert \int_0^{\ueps(t)} \geps^2 (\sigma, \veps (t) ) d \sigma - \int_0^{\ueps(t)} g^2 (\sigma, v (t) ) d \sigma \big\rvert \leq \ueps(t) \big( \lVert \geps^2 - g^2 \rVert_{\infty} + C \lVert \veps - v \rVert_{\infty} \big),
\]
for some $C > 0$ which depends only on $\p_v g$.
Hence \eqref{exists:limit} implies
\[
 \int_0^{\ueps(t)} g^2 (\sigma, v(t)) d\sigma \xrightarrow[\e \to 0]{L^p (0, T)} w(t).
\]

Therefore we only need to prove \eqref{exists:limit} to complete the proof. To do so we first obtain \eqref{estim:sqrte} by computing
\begin{align*}
 \f{ \geps(\ueps(t), \veps(t))^2}{\e} &= \geps( \ueps(t), \veps(t)) \dot{\ueps} \\
 &= \f{d}{dt} \int_0^{\ueps (t)} \geps( \sigma, \veps(t) ) d\sigma - \feps(\ueps, \veps) \int_0^{\ueps(t)} \p_v \geps (\sigma, \veps(t))d\sigma.
\end{align*}

Hence
\[
 \f{1}{\e}\int_0^T \big( \geps( \ueps(t), \veps(t) ) \big)^2 dt = \int_{\ueps(0)}^{\ueps(T)} \geps(\sigma, \veps(t)) d\sigma - \int_0^T \feps(\ueps(t), \veps(t) ) \int_0^{\ueps(t)} \p_v \geps(\sigma, \veps(t))d\sigma dt.
\]

Since $\feps, \geps$ and $\p_v \geps = \f{1}{\eta (\e)} h(\eta(\e) u_{\e})$ are uniformly bounded, we deduce that
\[
 \int_0^T \geps( \ueps(t), \veps(t) )^2 dt = O (\e).
\]
This gives \eqref{estim:sqrte}. Then we introduce
\[
 \weps(t) := \int_0^{\ueps(t)} \geps^2 (\sigma, \veps(t)) d\sigma.
\]
We compute
\[
 \dot{\weps} (t) = \f{1}{\e} \geps^2 (\ueps(t), \veps(t)) \e \dot{\ueps} + \feps(\ueps(t), \veps(t)) \int_0^{\ueps(t)} 2 \geps (\sigma, \veps(t)) \p_v \geps (\sigma, \veps(t) )d\sigma.
\]

By the previous point, $t \mapsto \f{1}{\e} \geps^2 (\ueps(t), \veps(t))$ is uniformly (in $\e$) bounded in $L^1$.
In addition, $t \mapsto \e \dot{\ueps}(t)$ is uniformly (in $\e$) bounded in $L^{\infty}$, by the Lemma \ref{lim:trivbound}.
The second term $\feps \int \geps \p_v \geps$ is uniformly bounded as well.

As a consequence, $\weps$ is uniformly (in $\e$) bounded in $BV_{\text{loc}}$.
This implies that up to extraction, $\weps \to w$ in $L^1$. Because $\weps$ is also bounded in $L^{\infty}$,
convergence actually takes place in all $L^p$ spaces.

\end{proof}

Finally, the shapes of $(f, g)$ allow us to describe simply the limit trajectories. We use the following assumptions:
for all $\e > 0$ small enough, we assume that the right-hand sides of system \eqref{sys:uveps} satisfy
  \begin{enumerate}
   \item[(R.1)] the set $\R^2 \backslash \{ f_{\e}=0, g_{\e} = 0 \}$ has exactly $4$ connected components, whose measures do not vanish as $\e \to 0$,
   \item[(R.2)] $f(u_0, v_0) \not=0$, $g(u_0, v_0) \not= 0$ and the couple $\big( \sgn(f_{\e} (u^{\e}_0, v^{\e}_0), \sgn(g_{\e} (u^{\e}_0, v^{\e}_0) \big)$ is constant and equal to $\big(\sgn(f (u_0, v_0)), \sgn(g(u_0, v_0)) \big)$.
  \end{enumerate}
  We also assume that the uniform limits $f,g$ of $f_{\e}, g_{\e}$ satisfy
  \begin{enumerate}
   \item[(L.1)] the curve $\Upsilon := \{ g = 0 \}$ is the graph of a function $\phi \in \mathcal{C}^1 (\R_+, \R_+)$ with $\phi(\infty) = \infty$ and $\phi(0) = 0$,
   \item[(L.2)] the function $g$ is positive on the epigraph of $\phi$,
   \item[(L.3)] the function $\phi$ has exactly two local extrema,
   \item[(L.4)] on the graph of $\phi$, $\sgn(f) = -1$ except for a bounded set.
  \end{enumerate}

\begin{lemma}
 With these assumptions we have:
  
 There exists a unique $\tau > 0$ and a (unique up to translations) $\tau$-periodic function $(u_{\tau}, v_{\tau}) : \R_+ \to \Upsilon$ such that $v_{\tau}$ is Lipschitz-continuous, $u_{\tau}$ is piecewise continuous, for all $t \geq 0$, $v_{\tau} = \phi(u_{\tau})$ everywhere, $\dot{v}_{\tau} = f(u_{\tau}, v_{\tau})$ almost everywhere and the discontinuities of $u_{\tau}$ are located at times $t$ such that $\phi$ has a local extremum at $u_{\tau} (t^-)$.

 There exists $\tau_1 \geq 0$ and $\tau_2  \in [0, \tau)$ such that for all $t > \tau_1$, $(u,v) (t) = (u_{\tau}, v_{\tau}) (t + \tau_2)$.
  Moreover, by construction $\tau_1$ and $\tau_2$ are uniquely defined from $u_0$ and $v_0$, so the limit $(u,v)$ is in fact unique and the whole family $(u_{\e}, v_{\e})_{\e}$ converges as $\e$ goes to $0$.

  \label{lem:trajectories}
\end{lemma}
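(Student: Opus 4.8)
The plan is to reduce everything to the one–dimensional slow flow carried by the curve $\Upsilon=\{g=0\}$. By (L.1), $\Upsilon$ is the graph $v=\phi(u)$, and by (L.3) the $\mathcal{C}^1$ map $\phi$ is S-shaped: it increases from $\phi(0)=0$ to a first (maximal) value $v_*:=\phi(u_*)$, decreases to a second (minimal) value $v^*:=\phi(u^*)$ with $u_*<u^*$, then increases to $+\infty$. The two increasing arcs are attracting for the fast field $\dot u=g/\e$ (since by (L.2) one has $g>0$ above $\Upsilon$ and $g<0$ below), while the middle decreasing arc is repelling. On each increasing arc $\phi$ is invertible, so the constraint $v=\phi(u)$ together with $\dot v=f(u,v)$ yields a scalar reduced flow $\dot v=F_\pm(v):=f\big(\phi_\pm^{-1}(v),v\big)$, with $\phi_-$ (resp. $\phi_+$) the left (resp. right) increasing branch. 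I would first record that, thanks to (L.4), $f$ keeps a constant sign on the traversed part of each attracting arc — positive on the left, negative on the right — so that $F_-$ and $F_+$ do not vanish on $(v^*,v_*)$ and the reduced flow is strictly monotone there; in particular the unique equilibrium of $(f,g)$ lies on the repelling middle arc and is never reached.

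With this in hand, the singular cycle is the concatenation: climb the left branch with $v:v^*\nearrow v_*$; at $v=v_*$ the left arc terminates at the fold, while $f(u_*,v_*)>0$ forces $v$ to keep increasing, which is possible on $\Upsilon$ only after an instantaneous horizontal jump $u_*\mapsto u_r$ at constant $v=v_*$ (with $\phi(u_r)=v_*$, $u_r>u^*$); descend the right branch with $v:v_*\searrow v^*$; and at $v=v^*$ jump back $u^*\mapsto u_\ell$ onto the left branch. This produces a closed curve on $\Upsilon$ with $v$ Lipschitz, $u$ piecewise continuous and discontinuous exactly at the two fold levels, of period
\[
\tau=\int_{v^*}^{v_*}\frac{dv}{F_-(v)}+\int_{v^*}^{v_*}\frac{dv}{-F_+(v)},
\]
both integrals being finite because $F_-$ and $-F_+$ are bounded below by positive constants up to the endpoints (the folds are genuine jump points, not equilibria). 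Uniqueness up to translation is then immediate: on each interval of continuity of $u$ a trajectory confined to $\Upsilon$ follows one branch along the monotone flow $\dot v=F_\pm$, hence reaches a fold in finite time and jumps, and the only closed itinerary compatible with the jump rule is the one just built; monotonicity of $F_\pm$ fixes the trajectory once a phase is chosen.

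For the second statement I would pass to the limit in Lemma \ref{lem:auxiliary}. Estimate \eqref{estim:sqrte} gives $g(u,v)=0$ almost everywhere, so the limit lives on $\Upsilon$ and $v=\phi(u)$ a.e. Since $\dot v_\e=\feps(\ueps,\veps)$ is uniformly bounded and $\feps(\ueps,\veps)\to f(u,v)$ in $L^p$ (from $\feps\to f$ uniformly, $\veps\to v$ uniformly, $\ueps\to u$ a.e. along the subsequence, and dominated convergence), the uniform limit $v$ is Lipschitz and solves $\dot v=f(u,v)$ a.e. Where $v$ takes a non-fold value, the equation $g(\cdot,v)=0$ determines $u$ uniquely on each branch by (L.1) and \eqref{ass:gnotvanish}, so $u$ is continuous there; at a fold level the branch must switch, exactly as in the construction, and (R.1)--(R.2) (the four-quadrant structure and the prescribed initial signs) select which attracting branch carries the trajectory, including the initial fast layer projecting $(u_0,v_0)$ onto $\Upsilon$ at constant $v=v_0$ when $g(u_0,v_0)\neq0$.

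It then remains to see that $(u,v)$ coincides with $(u_\tau,v_\tau)$ after a finite time: starting from the projected entry point, the monotone reduced flow carries $v$ to a fold in finite time $\tau_1$, after which the trajectory is exactly on the cycle, the phase $\tau_2\in[0,\tau)$ being the position of the entry point along the cycle; both $\tau_1,\tau_2$ are read off $(u_0,v_0)$ and the sign data, hence uniquely determined, and since the limit is thereby characterized independently of the extracted subsequence, the whole family $(\ueps,\veps)$ converges. I expect the \emph{main obstacle} to be the rigorous justification of the jumps at the folds in the singular limit: one must rule out that the limit trajectory lingers on, or crosses, the repelling middle branch, and pin down the direction of each jump. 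This is precisely where the repulsion of the middle arc, the non-degeneracy \eqref{ass:gnotvanish}, and the quadrant assumptions (R.1)--(R.2) are essential; a secondary technical point is upgrading subsequential convergence to convergence of the entire family through uniqueness of the limit.
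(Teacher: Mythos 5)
Your proposal follows essentially the same route as the paper's proof: construct the singular periodic orbit by solving the reduced scalar equation $\dot v = f(\phi^{-1}(v),v)$ branch by branch on the attracting (increasing) arcs of $\phi$, glue with horizontal jumps at the fold levels, identify the limit of $(u_{\e},v_{\e})$ through the a.e.\ constraint $g(u,v)=0$ inherited from Lemma \ref{lem:auxiliary}, project the initial datum onto the stable part of $\Upsilon$ via a fast layer, and show entry into the cycle in finite time using the sign of $f$; your period formula is exactly the paper's \eqref{formula:tau}. The one step you flag but defer --- ruling out that the limit lingers on or crosses the repelling middle arc, and justifying the jumps --- is precisely where the paper does its only genuinely analytic work: it linearizes the fast equation around the slow manifold by setting $\widehat{w}_{\e}=\widehat{u}_{\e}-\phi^{-1}(\widehat{v}_{\e})$, which satisfies a differential inequality with coefficient $\partial_1 g/\e\leq-\alpha/\e$ near a point of an attracting arc, forcing $\widehat{w}_{\e}=o(\e)$ and hence continuity of $u$ between folds; at the folds the repelling arc is excluded because $v$ is continuous, $\dot v$ has a definite sign there, and $\Upsilon_u$ lies locally on the wrong side of the fold level. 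So your text is an accurate roadmap of the paper's argument, with that crux acknowledged rather than carried out (the paper itself only sketches it). One small correction: constancy of the sign of $f$ along each traversed arc is obtained in the paper from (R.1) (the four connected components of the complement of the nullclines) combined with (L.4), not from (L.4) alone, and it is this constancy plus the lower bound on $\lvert f\rvert$ on the relevant component that gives $\tau_1<+\infty$.
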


Clearly from \eqref{eq:fg}, Lemma \ref{lem:trajectories} applies with the hypotheses of Theorem \ref{thm:mainslowfast} and
\beq
 \phi(u) = \f{\eta_0 u^2}{h(\eta_0 u)}, \quad \eta_0 = \f{\Lbar^2}{h(\Lbar)},
 \label{eq:phieta}
\eeq
thus proving the remaining part of the theorem. 

\begin{proof}[Proof of Lemma \ref{lem:trajectories}]

Thanks to assumptions (R.1), (L.1), (L.3) and (L.4), the construction of $(u_{\tau}, v_{\tau})$ is classical and can be done by pasting together solutions of Cauchy problems given (locally) by $\dot{v}_{\tau} = f( \phi^{-1} (v_{\tau}), v_{\tau})$, on intervals where $\phi$ is invertible. Uniqueness comes from the crucial fact that discontinuities of $u_{\tau}$ are assumed to be located at local extrema of $\phi$.

From the previous lemmas we know that $(u, v) \in \Upsilon$ almost everywhere.
In addition, uniform boundedness of $f_{\e}(u_{\e}, v_{\e})$ ensures that $v$ is Lipschitz continuous.

Then, we claim that if $t > 0$ is such that $\phi$ has no local extremum at $u(t^+)$, then there exists $\tau_0 > 0$ such that $(u, v)$ is continuous on $(t, t + \tau_0)$. This point is the key of the proof.
To prove it, let $u_i$ be such that $\phi' (u_i) < 0$. We solve only the simpler problem 
\[
\bepa
\dot{\widehat{v}}_{\e} = f (\widehat{u}_{\e}, \widehat{v}_{\e}), \quad \widehat{v}_{\e} (0) = \phi(u_i) + O(\e),
\\[10pt]
\e \dot{\widehat{u}}_{\e} = g(\widehat{u}_{\e}, \widehat{v}_{\e}), \quad \widehat{u}_{\e} (0) = u_i + O(\e).
\eepa
\]
Introducing $\widehat{w}_{\e} := \widehat{u}_{\e} - \phi^{-1} (\widehat{v}_{\e})$, where the inverse of $\phi$ is taken locally (this is possible for $\e$ small enough since $\phi'(u_i) < 0$ and $\widehat{v}_{\e}$ is uniformly Lipschitz-continuous), we obtain
\[
 \dot{\widehat{w}}_{\e} = \f{\widehat{w}_{\e}}{\e} \p_1 g (\widehat{r}_{\e} (t), \widehat{v}_{\e} (t)) - \f{f(\widehat{u}_{\e}, \widehat{v}_{\e})}{\phi' (\phi^{-1} (\widehat{v}_{\e}))}, \quad \widehat{w}_{\e} (0) = O(\e),
\]
for some $\widehat{r}_{\e} (t)$ between $\widehat{u}_{\e} (t)$ and $\phi^{-1} (\widehat{v}_{\e} (t))$.
We have $\p_1 g \leq - \alpha < 0$ on a neighborhood of $(u_i, \phi(u_i))$, so on this neighborhood $\widehat{w}_{\e}$ remains small (it is a $o(\e)$), which in turn proves that $(\widehat{r}_{\e}, \widehat{v}_{\e})$ remains in this neighborhood. In particular, $\widehat{u}_{\e}$ converges to some function $\widehat{u}$ which is continuous at $t = 0$ (since it is equal to $\phi^{-1} (\widehat{v}(t))$ on a positive neighborhood of $0$).
We do not write the full proof because the derivation we use here extends readily at the price of tedious notations. A full proof should use $f_{\e}, g_{\e}$ rather than $f, g$, and rise some analogue $\phi_{\e}$ of $\phi$ at level $\e > 0$, for $\e$ small enough, which is locally invertible on a neighborhood of the initial data. It does not require more assumptions than the ones we stated.

This is enough to get all the results of Lemma \ref{lem:trajectories}, except for the initial layer which we treat now.
To fix the notations, we assume that $\phi$ has a local minimum equal to $\phi_m$ at $u_m$ and a local maximum equal to $\phi_M > \phi_m$ at $u_M < u_m$. Moreover, let $u_m^0 < u_M$ such that $\phi(u_m^0) = \phi(u_m)$. For $\alpha, \beta \in \{ 1, -1 \}$, we also introduce $Z_{\alpha}^{\beta} := \{ \sgn(f) = \alpha, \, \sgn(g) = \beta \}$.

We define a mapping $\pi : \R^2 \to \Upsilon$ by $\pi = Id$ on $\Upsilon$ and if $(u, v) \in Z_{\alpha}^{\beta}$ then $\pi (u, v) = (u_1, v)$ such that $\phi(u_1)=v$ and $\sgn(u_1 - u) = \beta$. The projection $\pi$ is well-defined thanks to the assumptions on $\phi$ and~$g$, except on $(u_m^0, +\infty) \times \{ \phi_m \}$, on which we let $\pi \equiv (u_m^0, \phi_m)$.
Then $(u, v) (0^+) = \pi(u_0, v_0)$. To prove this, one simply has to check the behavior of $\ueps$ (since $\veps$ and $v$ are Lipschitz continuous). As above, we claim that the first-order behavior is simply given by the ``layer equation''
\[
 \e \dot{\widetilde{u}}_{\e} = g(\widetilde{u}_{\e}, v_0), \quad \widetilde{u}_{\e} (0) = u_0,
\]
which makes $\widetilde{u}_{\e}$ converge exponentially fast to $\pi(u_0,v_0)_1$, thanks to assumptions (R.2) and (L.2).
Up to tedious notations and thanks to \eqref{eq:fglimites} and (R.2), this result extends to $u_0^{\e}$, $v_{\e}$ and $g_{\e}$.

Let $\Upsilon_u = \Upsilon \cap \big( [u_M, u_m] \times \R_+ \big)$ and $\Upsilon_s = \Upsilon - \Upsilon_u$. (Note that $\pi (\R_+^2 - \Upsilon_u) = \Upsilon_s$.) 
After the initial layer, the trajectory of $(u,v)$ remains on $\Upsilon_s$. This follows from the sign of $f$ on $\Upsilon_u$: because of the continuity property, the trajctory cannot exit $\Upsilon_s$ but at $(u_m, \phi_m)$ (or $(u_M, \phi_M)$, respectively). At these points however, $\Upsilon_u$ is repulsive since $v$ must be continuous, $\dot{v} < 0$ ($\dot{v} > 0$, respectively) and $\Upsilon_u$ lies locally in $\{ v > \phi_m \}$ (respectively in $\{ v < \phi_M \}$).

Still, the initial data does not need to be projected directly by~$\pi$ on $\Upsilon_s \cap \R_+ \times [\phi_m, \phi_M]$.
Therefore, we introduce $\tau_1 \geq 0$ as
\[
 \tau_1 := \max \big(0, \sup \{ t \geq 0, \quad v(t) \not\in [\phi_m, \phi_M] \} \big).
\]
It remains to check that $\tau_1 < +\infty$.
For all $T > 0$, as long as $\phi$ has no local extremum at $u(t)$ for $t \in (0, T)$, $u$ is continuous.
Thanks to our assumption (R.1), there are two connected components in $\Upsilon_s$, on each one of whom $\sgn(f)$ is constant. Because of assumption (L.4), $f$ must be negative on the unbounded connected component. Therefore $(u, v)$ remains on $(0, T)$ in a part of $\Upsilon$ where $\lvert f \rvert$ is positively bounded from below (one of the two connected components of $\Upsilon_s$) and has the appropriate sign. This yields the existence of $\tau_1 < +\infty$.

Then for all $t \geq \tau_1$ we have $v(t) \in [\phi_m, \phi_M]$, and the trajectory is uniquely defined onwards.
\end{proof}

\begin{remark}
 We did not treat the case when the limit of $(u_0^{\e}, v_0^{\e})$ belongs to $\Upsilon$ (relaxing assumption~(R.2)). In this case indeed, no general result can be obtained, unless the various convergence speeds (of~$\feps, \geps, u_0^{\e}$ and $v_0^{\e}$) are quantified.
\end{remark}

\begin{remark}
	The last point of Theorem \ref{thm:mainslowfast} implies that the amplitude of the oscillations (in $u, v$) at the limit $\e \to 0$ can be computed if one knows these parameter scale in $\e$ thanks to only $f$ and $g$.	
Their period $\tau$ can also be computed directly from $f$ and $\phi$. As in the proof of Lemma \ref{lem:trajectories} we denote the intervals of values taken by $u(t)$ where it is continuous (and thus $\mathcal{C}^{\infty}$) as $[u_m^0, u_M]$ and $[u_m, u_M^0]$ respectively, and let
\[
 \Psi (u, v) = \int_{u}^{v} \frac{\phi'(u')}{f(u',\phi(u'))}du'.
\]
Then we have
\beq
 \tau = \Psi(u_m^0,u_M) + \Psi (u_M^0, u_m).
 \label{formula:tau}
\eeq
\end{remark}

\section{Hopf bifurcation}

\label{lll}
Numerical observations (see Section \ref{app:Hill} and Appendix \ref{app:Hill})  show that the system (\ref{eq:S}) has a stable periodic solution oscillating around the non-zero steady state, even far from the slow-fast asymptotics we explored in the previous section.
We now prove the local existence of this periodic solution using the Hopf bifurcation theorem (Theorem 8.8 from \cite{forme}, with a classical proof in \cite{cracken}; see also \cite{JP}) for $2 \times 2$ systems of differential equations.

\subsection{The function class $H_{\Lbar}$}

To find out a possible bifurcation parameter, we choose the hatching function $h$ within a special class, for which we fix the value of one specific steady state $\Lbar$. 
With this setting, we can state a bifurcation theorem using the simple bifurcation parameter $h'(\Lbar)$, which represents the sensitivity of hatching rate to larval density at equilibrium.

However, it is worth noting that our argument does not rely on the structure of this class of functions, and may be adapted, for instance, to the Hill functions considered in Appendix \ref{app:Hill}.

For a fixed $\Lbar$ the class of functions under consideration that fits our purposes is
\begin{equation}
  H_{\Lbar} := 
  \Big\{ 
h(L)=a\Big(\arctan(b(L-\Lbar))+\frac{\pi}{2}\Big), \, a,b \in \R^+ 
    \Big\} .
    \label{eq:H}
\end{equation}
Graphs of these functions are shown in Figure \ref{fig2}.
\begin{figure}[h!]
 \includegraphics[width=.5\linewidth]{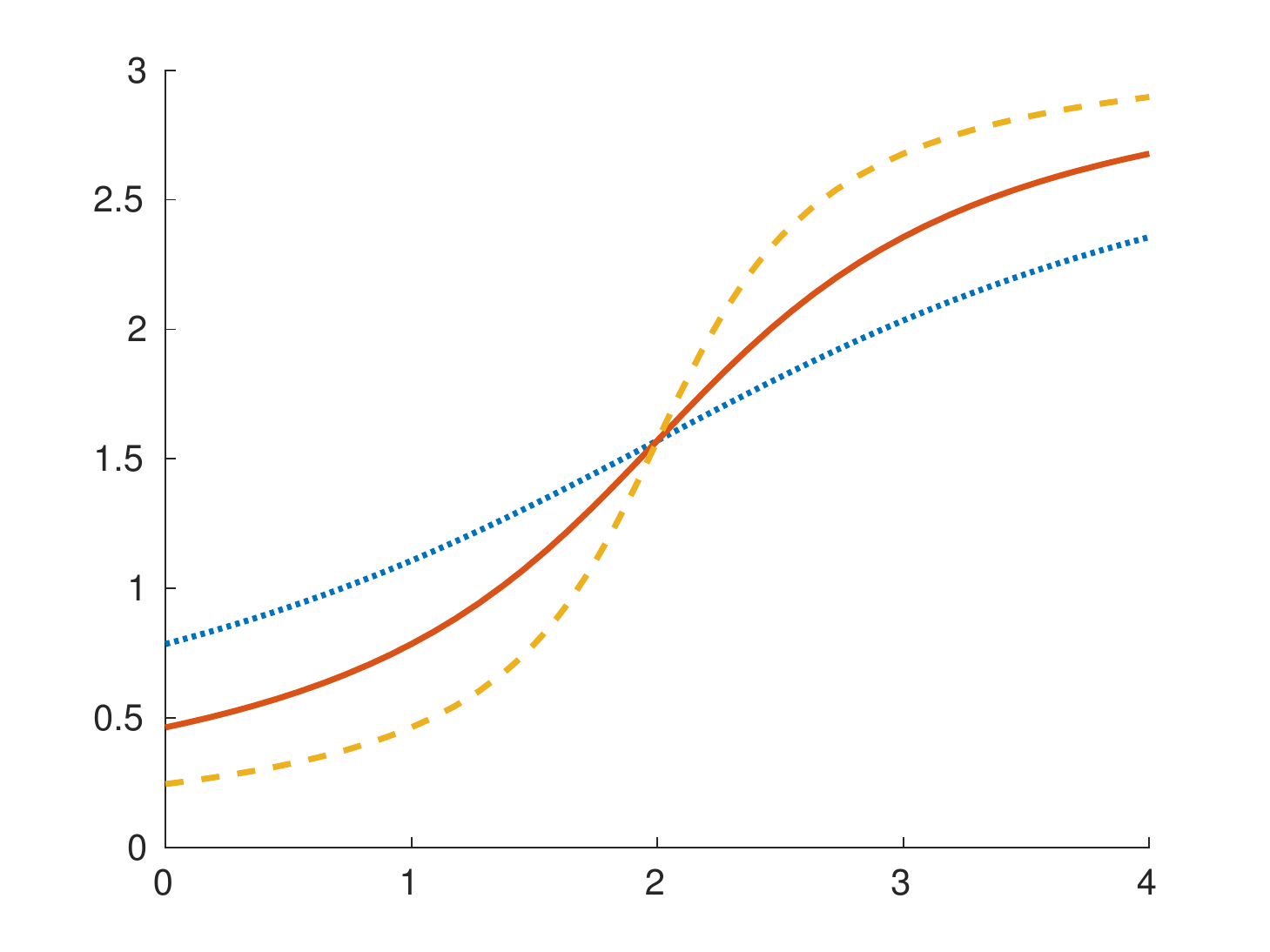}
 \includegraphics[width=.5\linewidth]{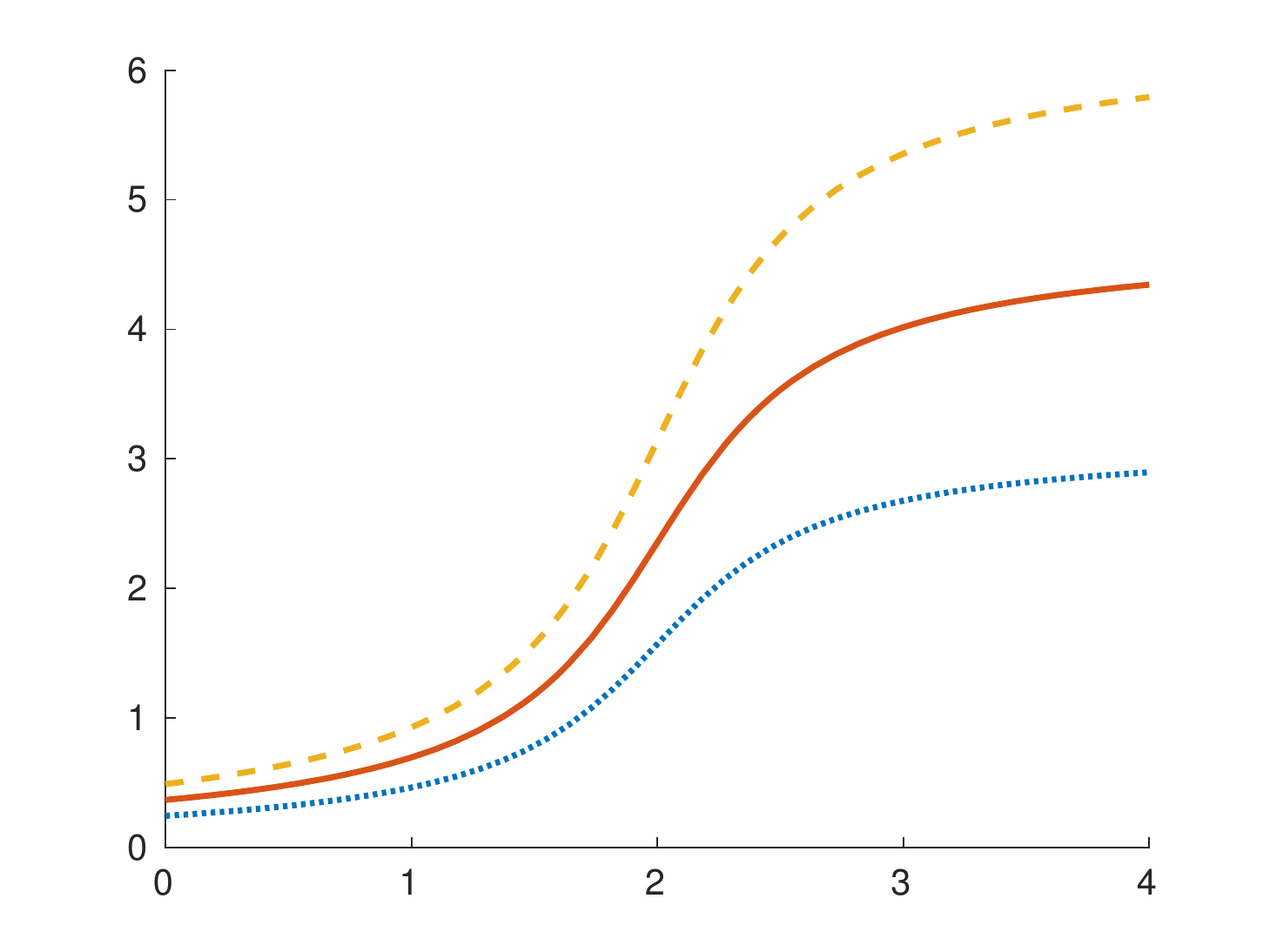}
 \caption{Function $h \in H_{\Lbar}$ with $\Lbar=2$. 
 {\it Left}: $a=1$, $b=\{0.5, 1, 2\}$. {\it Right}: $a=\{1, 1.5, 2\}$, $b=2$. Curve styles with increasing values in $a$ and $b$: dotted blue, solid red, dashed yellow.}
 \label{fig2}
\end{figure}
We use the immediate properties that these functions
are positive and increasing.
For any couple $(k,k')$ $\in \R_+^*\times\R_+^*$, there exists a unique function $ h$ of class $H_{\Lbar}$ with $h(\Lbar)=k$ and $h'(\Lbar)=k'$.
Finally, for all $c>0$, the steady state relation $h(\Lbar)=\frac{d_E(d_L+c\Lbar)}{b_E-d_L-c\Lbar}$ has a positive solution in $\Lbar$ if $a>\frac{d_Ed_L}{b_E-d_L}\frac{2}{\pi}$.
Indeed, for given values $(k, k') \in \R_+^2$, the choice of $a=\frac{2k}{\pi}$ and $b=\frac{k'}{a}$ gives the solution since
\[
h(\Lbar)= a\frac{\pi}{2}=\frac{2k}{\pi}\frac{\pi}{2}=k
\text{ and }
h'(\Lbar)=ab =\frac{2k}{\pi}\frac{k'}{\frac{2k}{\pi}}=k'.
\]
Also we can solve the equation in $\Lbar$, $\frac{a\pi}{2}=\frac{d_E(d_L+c\Lbar)}{b_E-d_L-c\Lbar}$, which yields $\Lbar = \frac{\frac{a\pi}{2}(b_E-d_L)-d_Ed_L}{c\frac{a\pi}{2}+cd_E}.$
Hence $\Lbar$ is positive under the stated condition.

\begin{remark}
From Lemma \ref{condh0}, for $h$ of class $H_{\Lbar}$, the state $(0,0)$ is unstable if and only if 
\begin{equation*}
a>\frac{d_Ed_L}{(b_E-d_L)(\frac{\pi}{2}+\arctan(-b\Lbar))}.
\end{equation*}
\end{remark}

\subsection{Transformation into a canonical form}

Let $P=(a,b)$ $\in$ $\R_+^2$ and the function $h_P$ of class $H_{\Lbar}$
 \beq 
 h_P(L)=a\Big(\arctan(b(L-\Lbar))+\frac{\pi}{2}\Big).
 \label{eq:hP}
 \eeq
We use the notation $k:=h_P(\Lbar)=a\frac{\pi}{2}$.
Let $P : \gamma \mapsto P(\gamma)=(a_0, b_0+\gamma)$
where $(a_0,b_0) \in \R^{*^2}_+$.
Then we can associate $P(\gamma)$ to a new system $(S_{\gamma}(a_0,b_0))$ obtained from \eqref{eq:S}
\begin{equation}
  \left\{ \tag{$S_{\gamma}(a_0,b_0)$}
      \begin{aligned}
\dot{E} &=  b_E L - d_E E -h_{P(\gamma)}(L) E,\\ 
\dot{L} &= h_{P(\gamma)}(L) E - d_L L - c L^2.
\end{aligned}
    \right.
    \label{eq:S2}
\end{equation}
This system has a positive equilibrium $(\Ebar,\Lbar)$ and the Jacobian matrix of the system evaluated in $(\Ebar,\Lbar)$ is:
$$J_{P(\gamma)}=
\left( \begin{array}{cc}
-d_E - h_{P(\gamma)}(\Lbar)   &  b_E - h'_{P(\gamma)}(\Lbar) \Ebar  \\[12pt]
h_{P(\gamma)}(\Lbar) & h'_{P(\gamma)}(\Lbar) \Ebar - d_L - 2c\Lbar 
\end{array}\right)
,$$
We set $\lambda_{1,2}(\gamma)=\alpha(\gamma) \pm i\beta(\gamma)$ the eigenvalues of $J_{P(\gamma)}$, when the discriminant of the characteristic polynomial of $J_{P(\gamma)}$ is negative.

\subsection{Main result}

Using function $T$ from \eqref{bkgk}, we define
\beq
 b(a) := \f{T(a)}{a}, \quad a_{crit}:=\frac{2k^+}{\pi} > 0.
 \label{eq:baacrit}
\eeq

\begin{theorem}\label{mainres}
There exists $\tilde{a}>0$ such that:
If $a>max(\tilde{a},a_{crit})$, $(S_{\gamma}(a,b(a)))$ has a supercritical Hopf Bifurcation in $\gamma=0$.
In particular:
\begin{enumerate}
\item there exists $\gamma_1$<0 such that for all $\gamma \in(\gamma_1,0]$, $(\Ebar,\Lbar)$ is a stable focus,
\item for all $U$ neighborhood of $(\Ebar,\Lbar)$, there exists $\gamma_2>0$ such that for all $\gamma \in [0, \gamma_2)$, $(\Ebar,\Lbar)$ is an unstable focus surrounded by a stable limit cycle contained in $U$, which has an amplitude that grows when $\gamma$ grows.
\end{enumerate}
\end{theorem}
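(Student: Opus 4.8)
The plan is to apply the planar Hopf bifurcation theorem (Theorem 8.8 of \cite{forme}) with bifurcation parameter $\gamma$; this requires, at $\gamma = 0$, a pair of pure imaginary eigenvalues, a nonzero transversal crossing speed, and a sign condition on the first Lyapunov coefficient fixing the criticality. I would first observe that the equilibrium does not move with $\gamma$: since the argument of $\arctan$ vanishes at $L = \Lbar$, one has $h_{P(\gamma)}(\Lbar) = a\pi/2$ independently of $\gamma$, so the steady-state relation \eqref{eq:steady} determines $(\Ebar, \Lbar)$ once and for all, with $\Ebar = b_E\Lbar/(d_E + a\pi/2)$ also $\gamma$-independent. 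Writing $k := h_{P(\gamma)}(\Lbar) = a\pi/2$ and $k' := h'_{P(\gamma)}(\Lbar) = a(b(a)+\gamma)$, the hypothesis $a > a_{crit}$ gives $k > k_+$, while $b(a)$ in \eqref{eq:baacrit} is calibrated precisely so that $\tr(J_{P(0)}) = 0$ (the trace vanishes iff $k' = T(k)$, by Proposition \ref{prop:eigenvalues}). As $\det(J_{P(0)}) > 0$ whenever $k > k_+$, the eigenvalues at $\gamma = 0$ are $\pm i\beta_0$ with $\beta_0 = \sqrt{\det(J_{P(0)})} > 0$.

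Next I would check transversality. Only the entry $h'_{P(\gamma)}(\Lbar) = a(b(a)+\gamma)$ of $J_{P(\gamma)}$ depends on $\gamma$, and affinely, so $\tr(J_{P(\gamma)}) = a\gamma\Ebar + \tr(J_{P(0)})$ and $\alpha(\gamma) = \tfrac12\tr(J_{P(\gamma)})$ satisfies $\alpha'(0) = \tfrac12 a\Ebar > 0$. Thus the eigenvalues cross the imaginary axis from left to right with nonzero speed; by continuity the discriminant of the characteristic polynomial stays negative near $\gamma = 0$, so the eigenvalues remain complex and $(\Ebar,\Lbar)$ is a stable focus for small $\gamma < 0$ and an unstable focus for small $\gamma > 0$. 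This already yields conclusion (1) and the focus part of conclusion (2).

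The main work, and the main obstacle, is to show the bifurcation is supercritical, i.e. that the first Lyapunov coefficient $\ell_1$ is strictly negative so that a \emph{stable} cycle appears for $\gamma > 0$. I would translate the equilibrium to the origin, conjugate the linear part to the canonical rotation form $\left(\begin{smallmatrix} 0 & -\beta_0 \\ \beta_0 & 0 \end{smallmatrix}\right)$, and insert the second- and third-order partial derivatives of the nonlinearities $b_E L - h_P(L)E$ and $h_P(L)E - cL^2$ into the standard planar formula for $\ell_1$. The relevant derivatives of $h_P$ at $\Lbar$ are explicit, $h_P''(\Lbar) = 0$ and $h_P'''(\Lbar) = -2ab^3$, which kills several cross terms and makes the leading cubic contribution carry the sign of $-h_P'''(\Lbar)\Ebar = 2ab^3\Ebar > 0$, acting in the stabilizing direction. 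The delicate point is the $a$-dependence: I expect that as $a \to \infty$ the genuinely $a$-dependent quantities ($\Ebar$, $\beta_0$, $b(a)$ and the competing bilinear terms) arrange so that $\ell_1 < 0$ beyond some threshold $\tilde a$, and turning this into a rigorous estimate, by bookkeeping the several terms and their orders in $a$, is where the bulk of the computation lies. Once $\ell_1 < 0$ is secured for $a > \max(\tilde a, a_{crit})$, the Hopf theorem delivers, for each small $\gamma > 0$, a unique stable limit cycle of amplitude $O(\sqrt{\gamma})$ enclosing the unstable focus, whose amplitude grows with $\gamma$, completing conclusion (2).
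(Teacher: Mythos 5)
Your proposal is correct and follows essentially the same route as the paper: pure imaginary eigenvalues at $\gamma=0$ because $a>a_{crit}$ forces $k=a\pi/2>k_+$ and $b(a)=T(a)/a$ kills the trace, transversality from $\partial_b\alpha=\tfrac{1}{2}a\Ebar>0$ (using that $(\Ebar,\Lbar)$ is $\gamma$-independent), and then the sign of the first Lyapunov/normal-form coefficient. The one step you leave as an expectation --- that $\ell_1<0$ for $a>\tilde a$ --- is precisely the step the paper itself only justifies by a Maple symbolic computation and numerical inspection (your heuristic that the cubic term $-h_P'''(\Lbar)\Ebar>0$ is ``stabilizing'' is loose, since after the conjugation to rotation form the sign of its contribution to $\ell_1$ is not evident), so your write-up matches the paper's level of completeness there.
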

\begin{remark}
$k_+$ is given by \eqref{eq:k+}, and $\tilde{a}$ is such that the normal form coefficient $\alpha_N$ (see \cite{forme}) of our system is negative if $a > \tilde{a}$. We simply give a numerical justification of the existence of $\tilde{a}$ as the computations appear to be very long (see the proof below).
\end{remark}
\begin{remark}
 The value of $a$ must be greater than $a_{crit}$ to ensure that the linearized operator has complex eigenvalues.
\end{remark}
The bifurcation diagram for $S_{\gamma}(a_0, b_0)$ in Figure \ref{figAUTO} is obtained by XPPAUT software~\cite{Ermentrout}.
\begin{figure}[h!]
 \center{\includegraphics[width=.5\linewidth]{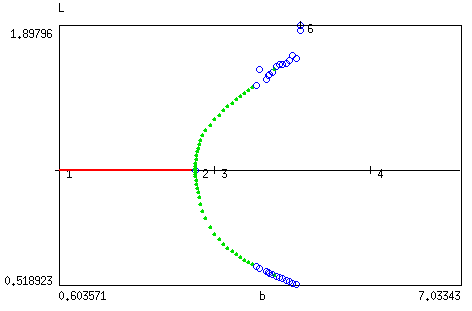}}
 \caption{Supercritical Hopf bifurcation diagram with $a_0=0.2$. The bifurcation parameter $b$ is in $x$-axis, the diagram shows extreme values of the periodic solution for $L$ (the $L$ scale is in $y$-axis). The steady state is stable (red line) until the bifurcation point (point number 2) is reached. A periodic solution appears and is stable (green points) until a bigger value of $b$, where it becomes unstable (blue circles). The amplitude of the periodic solution grows with the parameter $b$.}
 \label{figAUTO}
\end{figure}

\begin{proof}[Proof of Theorem \ref{mainres}]
 We set $\lambda_{1,2}(\gamma)=\alpha(\gamma)\pm i\beta(\gamma)$ (with $\gamma$ a real parameter), the two eigenvalues of $J_{P(\gamma)}$ the Jacobian matrix associated to our system and computed in (0,0). We call $\gamma_c$ a bifurcation value, and $\alpha_N(\gamma)$ the normal form coefficient of the system (see \cite{forme}).

Firstly, we only need to study complex conjugate and pure imaginary eigenvalues of $J_{P(\gamma)}$ to find the bifurcation value $\gamma_c$, which means also to look for $\gamma_c$ such that $\alpha (\gamma_c)=0$ and $\beta(\gamma_c) \neq 0$. Thanks to Proposition \ref{prop:eigenvalues} we know that this is the case when $k>k_+$ {\it i.e.} $\f{a\pi}{2}>k_+$ or equivalently $a>a_{crit}$ (by definition, $a_{crit}=\f{2k_+}{\pi}$). Moreover since $h'(\Lbar)=ab$ (direct computation from \eqref{eq:hP}), we know that the bifurcation value is located at the level of the graph $G$ of function $b$ (defined in \eqref{eq:baacrit})
\beq\label{GG}
G:= \{(a,b) \in \R^2,\ a>a_{crit},\ T(a) = a b = h'_{P(\gamma)}(\Lbar)\}.
\eeq
And we can set $\gamma_c=0$.

Secondly, we have to see if $\frac{d\alpha}{d\gamma}(\gamma_c)>0$, this means to check that $\tr(J_{P(\gamma)})$ changes sign at the bifurcation value $\gamma_c$. Let $\gamma \longmapsto z(\gamma)=\alpha(a_0, b(a_0)+\gamma)$. We recall that $\alpha(\gamma)$ is a function of $a$ and $b$.

Since
\[
\alpha = \frac{\tr(J_{P(\gamma)})}{2}=\frac{1}{2}\Big(-d_E-\frac{a\pi}{2}+ab\Ebar-d_L-2c\Lbar \Big),
\]
we have $z'(\gamma) = \partial_b\alpha=\frac{a\Ebar}{2}$ and we obtain that $z'(\gamma_c)=z'(0)=\frac{a\Ebar}{2}$ and it is always positive.

Thirdly, we have to study the normal form coefficient of the system computed in $\gamma_c=0$ and find when $\alpha_N(\gamma_c)\neq0$. To get the normal form coefficient, we have to transform the system \eqref{eq:S2} and we use the steps from \cite{forme}.
In a first step we reduce the initial system \eqref{eq:S2} to a system where the equilibrium $(\Ebar,\Lbar)$ becomes the origin. 
By the change of variables $x=E-\Ebar$ and $y=L-\Lbar$, $(S_{\gamma}(a_0,b_0))$ becomes:
\begin{equation}
  \left\{
      \begin{aligned}
\dot{x} &=b_E(y+\Lbar)-d_E(x+\Ebar)-a\Big(\arctan(by)+\frac{\pi}{2}\Big)(x+\Ebar),  \\
\dot{y} &=a\Big(\arctan(by)+\frac{\pi}{2}\Big)(x+\Ebar)-d_L(y+\Lbar)-c(y+\Lbar)^2 . 
	 \end{aligned}
    \right.
    \label{eq:chgt}
\end{equation}
Then as $(\Ebar,\Lbar)$ is an equilibrium, we can simplify \eqref{eq:chgt} into
\begin{equation}
  \left\{
      \begin{aligned}
\dot{x} &=b_Ey-d_Ex-\frac{a\pi}{2}x-a\Big(\arctan(by)\Big)(x+\Ebar),  \\
\dot{y} &=a\Big(\arctan(by)+\frac{\pi}{2}\Big)(x+\Ebar)+\frac{a\pi}{2}x-d_Ly-cy^2-2cy\Lbar,
	 \end{aligned}
    \right.
    \label{eq:chgt2}
\end{equation}
which we write as
\begin{equation}
  \left\{
      \begin{aligned}
\dot{x} &=b_Ey-d_Ex-\frac{a\pi}{2}x-aby\Ebar +f(x,y),  \\
\dot{y} &=\frac{a\pi}{2}x-d_Ly-2cy\Lbar+aby\Ebar +g(x,y),
	 \end{aligned}
    \right.
    \label{eq:chgt3}
\end{equation}
where
\[
f(x,y) =aby\Ebar-a\arctan(by)(x+\Ebar), \quad
g(x,y) =-aby\Ebar+a\arctan(by)(x+\Ebar)-cy^2.
\]

The system \eqref{eq:chgt3} can also be written under the matrix form
$$\left( \begin{array}{c}
\dot{x}    \\[12pt]
\dot{y}  
\end{array}\right)=
\left( \begin{array}{cc}
-\frac{a\pi}{2}-d_E   &  b_E - ab\Ebar  \\[12pt]
\frac{a\pi}{2} & - d_L - 2c\Lbar +ab\Ebar
\end{array}\right)
\left( \begin{array}{c}
x \\[12pt]
y
\end{array}\right)+
\left( \begin{array}{c}
f(x,y) \\[12pt]
g(x,y)
\end{array}\right).$$
We call $M$ the first ($2 \times 2$) matrix in the right-hand-side.

Now, to obtain the normal form coefficient, one way is to perform a linear change of variables so as to get
\beq
\left( \begin{array}{c}
\dot{X}    \\[12pt]
\dot{Y}  
\end{array}\right)=
N
\left( \begin{array}{c}
X \\[12pt]
Y
\end{array}\right)+
\left( \begin{array}{c}
F(X,Y) \\[12pt]
G(X,Y)
\end{array}\right), \quad 
N := \left( \begin{array}{cc}
0   & -\omega  \\[12pt]
\omega & 0
\end{array}\right).
\label{sys:matrices}
\eeq
In our case, we can have an idea of the normal coefficient only in a neighborhood of $\gamma=0$. Because we want to make a simple linear change of variables, we are looking for a matrix $P$ such that $PMP^{-1}=N$ and that \emph{at the bifurcation value} $\gamma=0$, $\tr(M)=0=\tr(N)$ and $\det(M)=-A^2-BC=\omega^2>0$.

We set
$M=
\left( \begin{array}{cc}
A  & B  \\[12pt]
C & -A
\end{array}\right)
$ and we can choose 
$P=
\left( \begin{array}{cc}
\frac{\omega+A}{2B\omega}  & \frac{1}{2\omega}  \\[12pt]
\frac{\omega-A}{2B\omega} & -\frac{1}{2\omega}
\end{array}\right)
$,
$P^{-1}=
\left( \begin{array}{cc}
B  & B  \\[12pt]
\omega-A & -A-\omega
\end{array}\right)
.$

Next we obtain the matrix system \eqref{sys:matrices} where
\[
\left( \begin{array}{c}
X  \\[12pt]
Y 
\end{array}\right)=
P\left( \begin{array}{c}
x \\[12pt]
y
\end{array}\right)=
\left( \begin{array}{c}
\frac{x(\omega+A)}{2B\omega}+\frac{y}{2\omega} \\[12pt]
\frac{x(\omega-A)}{2B\omega}-\frac{y}{2\omega}
\end{array}\right),
\quad
\left( \begin{array}{c}
x  \\[12pt]
y 
\end{array}\right)=
P^{-1}\left( \begin{array}{c}
X \\[12pt]
Y
\end{array}\right)=
\left( \begin{array}{c}
(X+Y)B \\[12pt]
(\omega-A)X+(-\omega-A)Y
\end{array}\right),
\]
$$\left( \begin{array}{c}
F(X,Y)  \\[12pt]
G(X,Y) 
\end{array}\right)=
P\left( \begin{array}{c}
f(x,y) \\[12pt]
g(x,y)
\end{array}\right)=
\left( \begin{array}{c}
\frac{\omega+A}{2B\omega}f(x,y)+\frac{1}{2\omega}g(x,y)\\[12pt]
\frac{\omega-A}{2B\omega}f(x,y)-\frac{1}{2\omega}g(x,y)
\end{array}\right)=
\left( \begin{array}{c}
f(x,y)\Big(\frac{\omega+A}{2B\omega}-\frac{1}{2\omega}\Big)-\frac{1}{2\omega}cy^2 \\[12pt]
f(x,y)\Big(\frac{\omega-A}{2B\omega}+\frac{1}{2\omega}\Big)+\frac{1}{2\omega}cy^2
\end{array}\right).$$

In a final step we compute the normal form coefficient using the previous formulas and the expression that exists in two dimensions given in \cite{forme} which is:
\begin{eqnarray*}
\alpha_N(\gamma=0) &=& \frac{1}{16}\Big(F_{XXX}+F_{XYY}+G_{XXY}+G_{YYY}\Big)\\
&\ &-\frac{1}{16\omega}\Big(G_{XY}(G_{XX}+G_{YY})-F_{XY}(F_{XX}+F_{YY}) + F_{XX}G_{XX}-F_{YY}G_{YY}\Big).
\end{eqnarray*}
The coefficient is easy but very tedious to compute, and we used the computer algebra system Maple~\cite{maple} to get its expression.

In our case the coefficient is equal to zero for some value $\tilde{a}>0$, and is always negative for $a>\tilde{a}$ (as it appears that $\tilde{a}<a_{crit}$, this is sufficient by definition of \eqref{GG}).
Then $\alpha_N(\gamma_c)\neq0$ for $a\neq\tilde{a}$.

Finally, we want to have for all real $\gamma$ in a neighborhood of 0, $\alpha_N(\gamma)\alpha(\gamma)<0$.
Thanks to Maple we have $\alpha_N(0)<0$, in a neighborhood of $\gamma=0$, for $a>\tilde{a}$ with $\tilde{a}$ small.

So we can apply the Hopf bifurcation theorem that ensures there exists a limit cycle (periodic solution) when $\alpha(\gamma)>0$ (i.e $\tr(J_P(\gamma))>0$), and moreover this cycle is stable as $\alpha(\gamma)>0$: we are faced to a supercritical bifurcation.
\end{proof}

\subsection{Discussion on the period of the oscillations}

\label{s4}
The period of the oscillating solutions are relevant to the biological problem in consideration, because they can be compared with observations in nature.

\begin{proposition}
As $\gamma \to 0^+$, the periodic solution of the system $(S_\gamma(a,b(a)))$ has a frequency $\omega$ and a period $T_0 = 2 \pi / \omega$ given by the expression
\[
\omega =\frac{1}{\sqrt{d_E+k}}\Big[k^2(b_E-d_L-d_E)+k(-2{d_E}^2-b_Ed_E-d_Ed_L) -{d_E}^3\Big]^{\frac{1}{2}}.
\]
\end{proposition}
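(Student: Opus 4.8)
The plan is to exploit the defining feature of the Hopf bifurcation theorem: at the bifurcation value $\gamma_c = 0$ the Jacobian $J_{P(0)}$ carries a pair of purely imaginary eigenvalues $\pm i\beta(0)$, and the frequency of the bifurcating periodic orbit converges to $\beta(0)$ as $\gamma \to 0^+$. Hence it suffices to identify $\omega = \beta(0)$. Since the characteristic polynomial of $J_{P(0)}$ reads $\lambda^2 - \tr(J_{P(0)})\lambda + \det(J_{P(0)})$ and the bifurcation condition forces $\tr(J_{P(0)}) = 0$ (equivalently $k' = T(k)$ by Proposition~\ref{prop:eigenvalues}), the eigenvalues are exactly $\pm i\sqrt{\det(J_{P(0)})}$, so that $\omega = \sqrt{\det(J_{P(0)})}$. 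The whole proposition thus reduces to computing this determinant in closed form.

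To do so I would start from the expression for $\det(A)$ recorded in \eqref{eq:trdet}, namely $\det(A) = c d_E \Lbar - d_E h'(\Lbar)\Ebar + c\Lbar h(\Lbar)$, and eliminate $\Ebar$ and $c\Lbar$ using the steady-state relations \eqref{eq:steady}. With $k := h(\Lbar) = a\pi/2$ these give $\Ebar = b_E\Lbar/(d_E+k)$ and, after simplification, $c\Lbar(d_E+k) = k(b_E - d_L) - d_E d_L$. Substituting both recasts $\det(A)$ as an affine function of $k' = h'(\Lbar)$:
\[
\det(A) = \big(k(b_E - d_L) - d_E d_L\big) - \f{d_E b_E \Lbar}{d_E+k}\, k'.
\]

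Next I would insert the bifurcation condition $k' = T(k)$ from \eqref{bkgk}. The factor $\Lbar$ cancels against the $1/\Lbar$ appearing in $T$, so the result is a rational function of $k$ alone. Placing everything over the common denominator $d_E + k$ and collecting powers of $k$, I expect the numerator to reduce to
\[
k^2(b_E - d_L - d_E) + k(-2 d_E^2 - b_E d_E - d_E d_L) - d_E^3,
\]
whence $\omega = \sqrt{\det(A)}$ is precisely the claimed expression, and $T_0 = 2\pi/\omega$ follows at once.

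I do not anticipate any conceptual obstacle; the only real work is the bookkeeping in this last step, in particular the cancellation of the $\pm d_E d_L$ contributions and the careful grouping of the $k^2$, $k$ and constant terms. One point I would state explicitly for rigour: Proposition~\ref{prop:eigenvalues} guarantees $\det(A) > 0$ at the bifurcation, since $a > a_{crit}$ means $k > k_+$, which by Lemma~\ref{k+} gives $T(k) < D(k)$, so that $k' = T(k) < D(k)$ and hence $\det(A) > 0$. This ensures the square root is well defined and that $\omega$ is real and positive, as required.
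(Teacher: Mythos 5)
Your proposal is correct and follows essentially the same route as the paper: identify $\omega$ as the imaginary part of the eigenvalues at the bifurcation, note that $\tr(J_{P(0)})=0$ forces $\omega=\sqrt{\det(J_{P(0)})}$, and compute the determinant by eliminating $\Ebar$ and $c\Lbar$ via \eqref{eq:steady} and inserting $k'=T(k)$; the algebra you outline does reduce to the stated numerator over $d_E+k$. Your explicit verification that $\det(J_{P(0)})>0$ via Lemma~\ref{k+} is a small addition in rigour that the paper leaves implicit.
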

\begin{proof}
As $\gamma \to 0^+$, the oscillations frequency is given by the imaginary part of the root of the polynomial equation \eqref{eq:lambda} in the case of non-trivial steady state. The frequency is $\omega_{\gamma}=\sqrt{\det(J_{P(\gamma)})}$, where the expression of $\det(J_{P(\gamma)})$  is
\begin{eqnarray*}
\det(J_{P(\gamma)})&=&\frac{1}{d_E+k}\Big[k^2(b_E-d_L-d_E)+k(-2{d_E}^2-b_Ed_E-d_Ed_L) -{d_E}^3\Big] .
\end{eqnarray*}
Then the expression of $\omega = \omega_0$ follows.
\end{proof}
\begin{remark}
At the bifurcation value, the parameter $k$ can be linked with the period $T_0$. Let $T_0$ a given period observed experimentally, then we find a corresponding value for $k$ as the positive root of the following characteristic polynomial:
$$k^2\Big[T_0^2(b_E-d_L-d_E)\Big]+k\Big[T_0^2(-2d_E^2-b_Ed_E-d_Ed_L)-4\pi^2\Big]-T_0^2d_E^3-4\pi^2d_E .$$

Away from the bifurcation value, the real part of the eigenvalues is greater than zero and the period of the oscillations can only be obtained numerically. Unfortunately, this case is more relevant as the Hopf bifurcation theorem asserts that the amplitude is increasing with the parameter $\gamma$.
In other words, for fixed $a$ the amplitude of the oscillations is an increasing function of $b$.
\end{remark}

\section{Conclusion}

We show that introducing internal regulation in the form of a larval-density-mediated hatching rate in a compartmental model for mosquito population dynamics induces stable oscillations. These oscillations can be rather simply understood from the mathematical point of view either as cycles produced by a Hopf bifurcation (Theorem \ref{mainres}), in a first parameter regime, or as the typical slow-fast behavior (close to FitzHugh-Nagumo model, Theorem \ref{thm:mainslowfast}) in a second parameter regime.

Our study supports the idea that understanding internal life-cycle regulation can effectively help modeling and simulating population dynamics properly. Ongoing experiments of some of the authors try to reproduce the larval density impact on hatching which was observed in \cite{art:hatch} and may shed some light on this misunderstood phenomenon. In particular, restricting the parameters and possible oscillations range could only be reached by assessing as precisely as possible the actual hatching feedback.

In this paper we neglect environmental variations. Therefore it leaves open for future studies the deep question of linking internal life-cycle regulation and external variations (induced, for instance, by rainfall and temperature) in order to get a better description of the mosquito populations dynamics. However, it was observed that population oscillations may happen on periods much shorter than seasonal variations, and this justifies the study of internal regulations as possible triggers.

Another possible extension of our works is the adaptive dynamics of hatching regulation trait. Indeed, synchronizing the egg hatching may be beneficial for a population in a given environment, but also be detrimental if rare and extreme events can annihilate larval population, for instance. The egg stage can be seen indeed as a quiescent, refuge state for the species (this approach was studied in \cite{Yan.Assessing}). Here we prove that positive feedback of larvae on egg hatching tends to make the population size oscillate, creating distinct generations ({\it synchronizing effect}) while negative feedback tends to stabilize the population size, which may be detrimental on the long run if, for example, the favorable period for larvae and adult development is typically short.

 
\paragraph{Acknowledgements.}
BP has received funding from the European Research Council (ERC) under the European Union's Horizon 2020 research and innovation programme (grant agreement No 740623).
MS, NV and DAM acknowledge partial funding from Inria, France and CAPES, Brazil
(processo 99999.007551/2015-00), in the framework of the STIC AmSud project MOSTICAW and from CAPES/COFECUB project Ma-833 15 “Modeling innovative control method for Dengue fever”.
MS and NV acknowledge partial funding from the ANR blanche project Kibord: ANR-13-BS01-0004
funded by the French Ministry of Research.

\appendix

\section{Observations on a class of hatching functions}
\label{app:Hill}

Among the many possible choices for a S-shaped hatching function $h$, we numerically and theoretically explore the typical family of Hill functions. 
We assume the following form with parameters $a, \lambda, p > 0$
\beq
	h(L) = h_m + a \f{L^p}{\lambda^p + L^p}, \quad h_m > \df{d_E d_L}{b_E - d_L}.
	\label{eq:Hillshape}
\eeq

Steady states ($\Ebar, \Lbar$) of \eqref{eq:S} are such that $\Lbar$ is a solution of $Q(L) = 0$, where
\begin{multline*}
 Q(L) = -c  L^{p+1} \big(h_m + a + d_E \big) + L^p \big( (h_m + a)(b_E - d_L) - d_E d_L \big) \\ - c \lambda^p L ( h_m + d_E ) + \lambda^p \big( h_m (b_E - d_L) - d_E d_L \big).
\end{multline*}

The following lemma is a straightforward consequence of this computation
\begin{lemma}
When $p=1$ and $h$ is of type \eqref{eq:Hillshape}, there is a unique steady state of \eqref{eq:S}.
\end{lemma}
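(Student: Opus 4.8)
The plan is to use the fact that the exponent $p=1$ lowers the degree of $Q$ to two, after which an elementary sign analysis of its coefficients settles everything. First I would set $p=1$ in the polynomial $Q(L)$ displayed above and merge the two terms that become linear in $L$, obtaining the quadratic
\[
Q(L) = -c(h_m + a + d_E)\,L^2 + \big[(h_m+a)(b_E - d_L) - d_E d_L - c\lambda(h_m + d_E)\big]\,L + \lambda\big(h_m(b_E - d_L) - d_E d_L\big).
\]
The leading coefficient is strictly negative since every parameter is positive, and the constant term is strictly positive because the standing assumption $h_m > \f{d_E d_L}{b_E - d_L}$ from \eqref{eq:Hillshape} is exactly $h_m(b_E - d_L) - d_E d_L > 0$.

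Next I would exploit the relations between roots and coefficients: the product of the two roots of $Q$ equals its constant term divided by its leading coefficient, hence is strictly negative. Two complex-conjugate roots would have a positive product, so the roots must be real and of opposite signs; in particular $Q$ has exactly one positive root, which I denote $\Lbar$. This already pins down a single candidate for the larval density at a positive equilibrium.

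Finally I would check that this unique positive root is admissible, i.e.\ corresponds to a genuine positive steady state of \eqref{eq:S}. For $L>0$ the equation $Q(L)=0$ is equivalent to the steady-state relation $h(L)(b_E - d_L - cL) = d_E(d_L + cL)$, the cleared-denominator form of \eqref{eq:hss}. At $\Lbar$ the right-hand side is positive and $h(\Lbar)>0$, forcing $b_E - d_L - c\Lbar > 0$; together with $\Ebar = \f{b_E \Lbar}{d_E + h(\Lbar)} > 0$ from \eqref{eq:steady}, this shows $(\Ebar,\Lbar)$ is a bona fide positive steady state, whose uniqueness is inherited from that of the positive root. I do not anticipate any real obstacle: the whole argument hinges on the degree drop at $p=1$ and a one-line sign count, so the only care needed is in correctly collecting the linear terms and in confirming that the positive root is not spurious.
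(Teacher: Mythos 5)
Your proof is correct and follows the same route the paper intends: for $p=1$ the polynomial $Q$ becomes a quadratic with negative leading coefficient and positive constant term (by $h_m>\f{d_E d_L}{b_E-d_L}$), hence exactly one positive root. The paper leaves this as "a straightforward consequence of this computation"; your Vieta sign argument and the check that the positive root satisfies $b_E-d_L-c\Lbar>0$ simply fill in the omitted details.
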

When $p=1$ and $h$ is of type \eqref{eq:Hillshape}, then $h'' < 0$, a property that is lost when $p>1$.
Therefore, to simplify the choice of the parameters, now we assume 
\beq
d_E= 0.
\label{immortaleggs}
\eeq
Then, condition \eqref{eq:sufcond} is fulfilled, the steady state of \eqref{eq:S} is unique and is given by
\[
	\Lbar = \f{b_E - d_L}{c}, \quad \Ebar = \f{b_E \Lbar}{h(\Lbar)}.
\]

\begin{proposition}
 Let $h$ be of type \eqref{eq:Hillshape} and assume condition \eqref{immortaleggs} holds.
 Then \eqref{eq:S} has a unique positive steady state $(\Ebar, \Lbar)$ and its linearization has eigenvalues with negative real parts if and only if 
 \beq
 k > \df{a}{1+\alpha^p} > \df{\alpha^p+1}{p \alpha^p} k \big(2 + \f{k-d_L}{b_E} \big), \quad \alpha = \df{\lambda}{\Lbar}, \quad k = h(\Lbar).
 \label{cond:k}
 \eeq
\end{proposition}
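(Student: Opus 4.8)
The plan is to split the statement into its two assertions---uniqueness of the positive equilibrium and the spectral criterion---and to exploit the fact that the hypothesis $d_E=0$ drastically simplifies the Jacobian. For uniqueness I would insert the Hill form \eqref{eq:Hillshape} with $d_E=0$ into the steady-state polynomial $Q$ displayed above and factor it: a direct computation gives $Q(L)=(b_E-d_L-cL)\big((h_m+a)L^p+\lambda^p h_m\big)$. Since $h_m>0$, the second factor is strictly positive on $\R_+$, so the unique positive root is $\Lbar=(b_E-d_L)/c$ (with $b_E>d_L$ guaranteed by \eqref{hyp:naturelle} at $d_E=0$), and then $\Ebar=b_E\Lbar/k$ with $k=h(\Lbar)$. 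This recovers the formulas stated right before the proposition.

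For the spectral part I would rely on the trace/determinant description \eqref{eq:trdet} and Proposition \ref{prop:eigenvalues}. The decisive simplification is that setting $d_E=0$ collapses the determinant to $\det(A)=c\Lbar\,h(\Lbar)=c\Lbar k$, which is strictly positive unconditionally. For a $2\times2$ real matrix, once $\det(A)>0$ is known both eigenvalues have real part of a single sign dictated solely by the trace; hence the real parts are negative exactly when $\tr(A)<0$, and no independent determinant condition can enter the final criterion. The whole problem therefore reduces to deciding the sign of $\tr(A)=h'(\Lbar)\Ebar-d_L-2c\Lbar-h(\Lbar)$ at the unique equilibrium.

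The remaining work is to make $\tr(A)$ explicit and rearrange. Differentiating \eqref{eq:Hillshape} gives $h'(L)=a\,p\lambda^p L^{p-1}/(\lambda^p+L^p)^2$; introducing $\alpha=\lambda/\Lbar$ and using $k=h(\Lbar)=h_m+a/(1+\alpha^p)$, this becomes $h'(\Lbar)=\frac{a p\alpha^p}{\Lbar(1+\alpha^p)^2}$, so that $h'(\Lbar)\Ebar=\frac{a p\alpha^p b_E}{k(1+\alpha^p)^2}$. Substituting this and $c\Lbar=b_E-d_L$ into the trace and clearing denominators turns the trace condition into a comparison between $\frac{a}{1+\alpha^p}$ and $\frac{1+\alpha^p}{p\alpha^p}\,k\big(2+\frac{k-d_L}{b_E}\big)$---the two quantities appearing in the second inequality of \eqref{cond:k}---while the left inequality $k>\frac{a}{1+\alpha^p}$ reduces to $h_m>0$ and so is automatic, recording only that one sits above the baseline hatching value. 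The main obstacle is purely the bookkeeping of this last rearrangement: one must divide successively by the positive quantities $k$, $b_E$ and $p\alpha^p$ and keep careful track of the resulting inequality direction, since it is precisely this direction that decides whether \eqref{cond:k} characterizes negative or positive real parts---the delicate point on which the whole equivalence hinges.
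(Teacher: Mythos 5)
Your route is essentially the paper's route: the paper also reduces the spectral question to the trace via Proposition \ref{prop:eigenvalues} (noting that $d_E=0$ forces $k_+=0$ in \eqref{eq:k+}, so $k>k_+$ automatically and the instability criterion collapses to $h'(\Lbar)>T(h(\Lbar))$ with $T$ from \eqref{bkgk}), then computes $h(\Lbar)=h_m+\f{a}{1+\alpha^p}$ and $h'(\Lbar)=\f{ap\alpha^p}{\Lbar(1+\alpha^p)^2}$ and identifies the right-hand inequality of \eqref{cond:k} with $h'(\Lbar)>T(k)$ and the left-hand one with $h_m>0$. Your direct observation that $d_E=0$ makes $\det(A)=c\Lbar k>0$ in \eqref{eq:trdet} is exactly what underlies the paper's reduction, and your explicit factorization $Q(L)=(b_E-d_L-cL)\big((h_m+a)L^p+\lambda^p h_m\big)$ is a clean justification of the uniqueness claim, which the paper only asserts in the text preceding the proposition. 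These are genuine (small) improvements in completeness.

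However, you stop one step short of the conclusion and explicitly leave open ``whether \eqref{cond:k} characterizes negative or positive real parts.'' This is the decisive step and it must be carried out, not deferred as bookkeeping. Doing so: with $c\Lbar=b_E-d_L$ and $\Ebar=b_E\Lbar/k$, the condition $\tr(A)>0$ reads $\f{ap\alpha^p}{(1+\alpha^p)^2}>k\big(2+\f{k-d_L}{b_E}\big)$, i.e.\ exactly the second inequality of \eqref{cond:k}. Since $\det(A)>0$ forces the real parts of both eigenvalues to carry the sign of the trace, \eqref{cond:k} is therefore equivalent to both eigenvalues having \emph{positive} real parts, i.e.\ to instability --- which is also what the paper's own proof establishes (it derives the inequality ``from $h'(\Lbar)>T(k)$,'' the instability criterion), and is consistent with the surrounding discussion where \eqref{cond:k} is the condition producing oscillations. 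The word ``negative'' in the proposition's statement is thus a sign slip that your computation, once completed, would expose; you should resolve the direction and flag the discrepancy rather than present it as an unresolved delicacy.
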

\begin{proof}
Necessarily $k > k_+ = 0$ (where $k_+$ is defined in \eqref{eq:k+}). Hence the eigenvalues of the linearized system at $(\Ebar, \Lbar)$ are in $\mathbb{C} \backslash \R$ and the condition for instability of the steady state from Proposition \ref{prop:eigenvalues} simply reads $h'(\Lbar) > T( h(\Lbar))$, where $T$ is defined in \eqref{bkgk}.
Then we compute
\[
	h(\Lbar) = h_m + \f{a}{1 + \alpha^p}, \quad h'(\Lbar) = \f{a p \alpha^p}{\Lbar \big( \alpha^p + 1 \big)^2}.
\]
The right-hand side inequality in \eqref{cond:k} comes from $h'(\Lbar) > T(k)$ and the left-hand side from $h_m > 0$.
\end{proof}
 
If all parameters but $\alpha$ and $a$ are fixed, then condition \eqref{cond:k} can be fulfilled if and only if
\beq
	k < (p - 2) b_E + d_L.
	\label{cond:alpha}
\eeq
Indeed, we need to find $\alpha > 0$ such that $2 + \frac{k-d_L}{b_E} < p \frac{\alpha^p}{1 + \alpha^p}$.
Note that in particular, this is impossible when $p \leq 1$ (since $c \Lbar = b_E - d_L > 0$ by hypothesis).

We provide below numerical results showing consistent oscillations under condition \eqref{cond:alpha}, for $2$- and $3$-dimensional systems \eqref{eq:S} and \eqref{eq:S3}.
To explore the possible behaviors depending on the function $h$ of type \eqref{eq:Hillshape}, we fix the biological parameters (including $\Lbar$), $p > 1$ and $k = h(\Lbar) > 0$ such that \eqref{cond:alpha} holds.
We introduce the notation $X(k) := \f{1}{p} (2 + \f{k-d_L}{b_E}) < 1$ and use two parameters: $\iota \in (0, 1- X(k))$ and $\zeta \in (0, 1)$, in order to represent the full range of \eqref{cond:k}. More precisely, we will parametrize $a$ and $\alpha$ with $\iota, \zeta$, as functions of $k$, and then we can go back to a function in \eqref{eq:Hillshape} by letting $\lambda = \alpha \Lbar$ and $h_m = k - a / (1 + \alpha^p)$.

\begin{figure}
[h!]
\includegraphics[width=.5\textwidth]{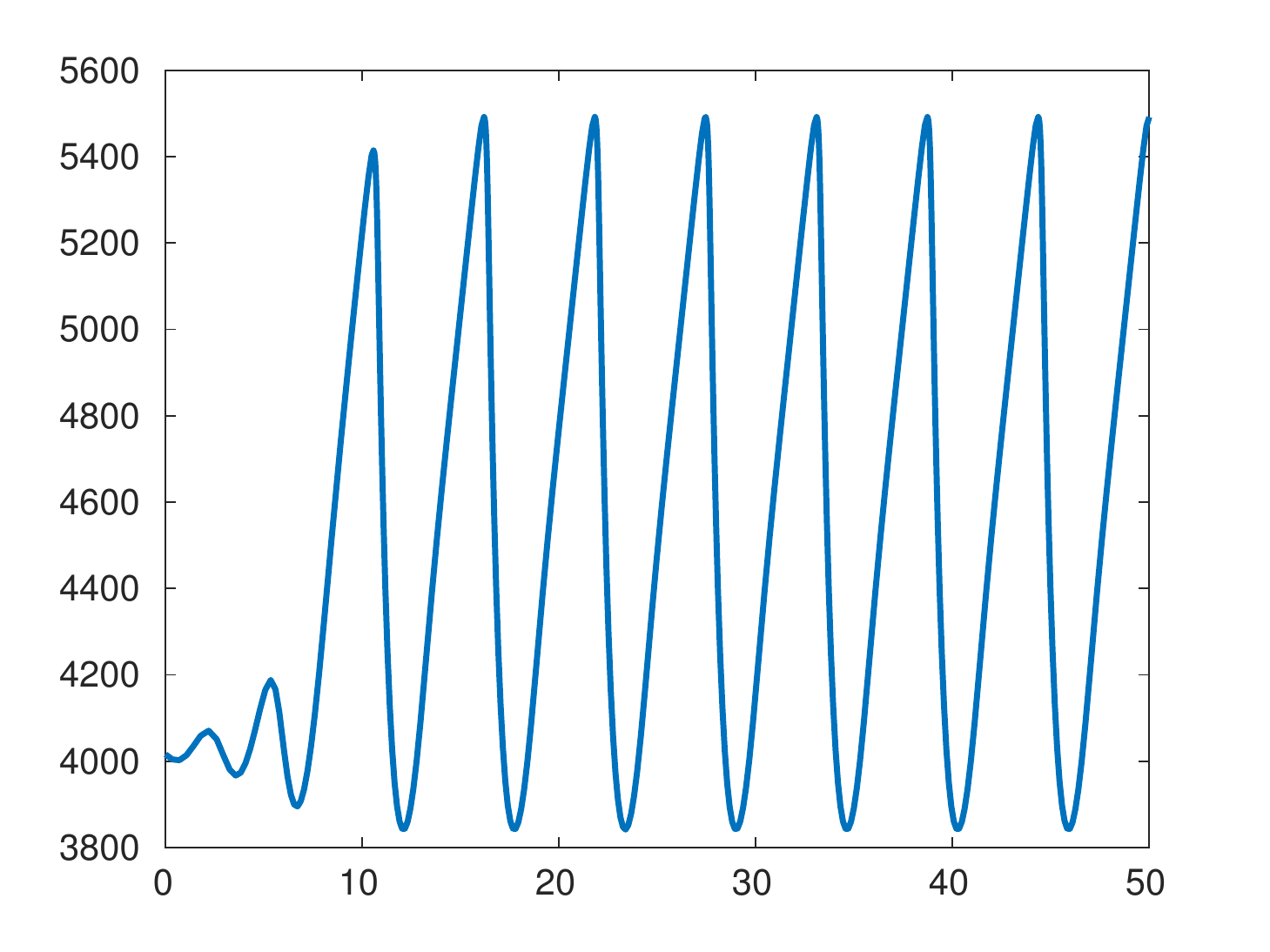}
\includegraphics[width=.5\textwidth]{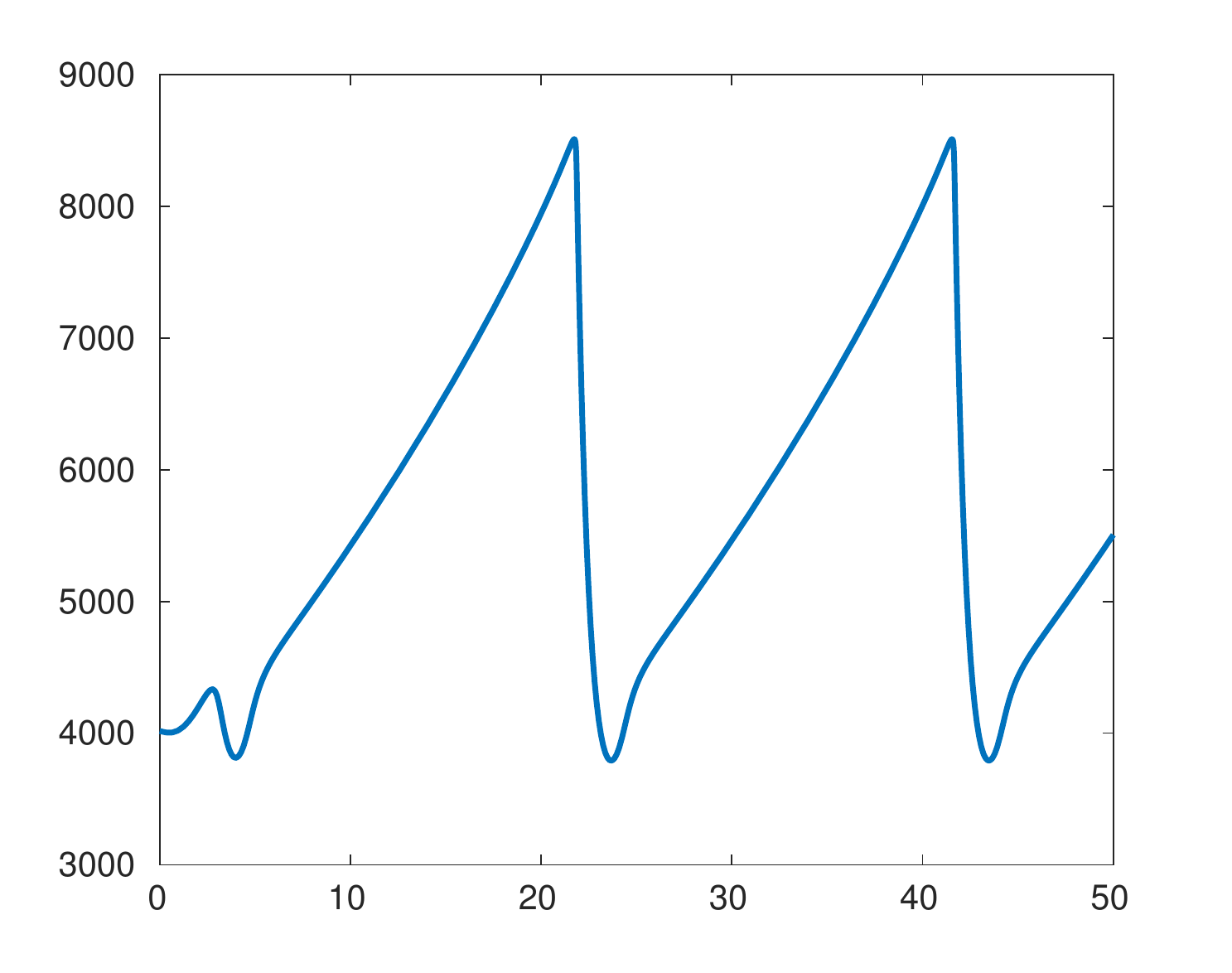}
\includegraphics[width=.5\textwidth]{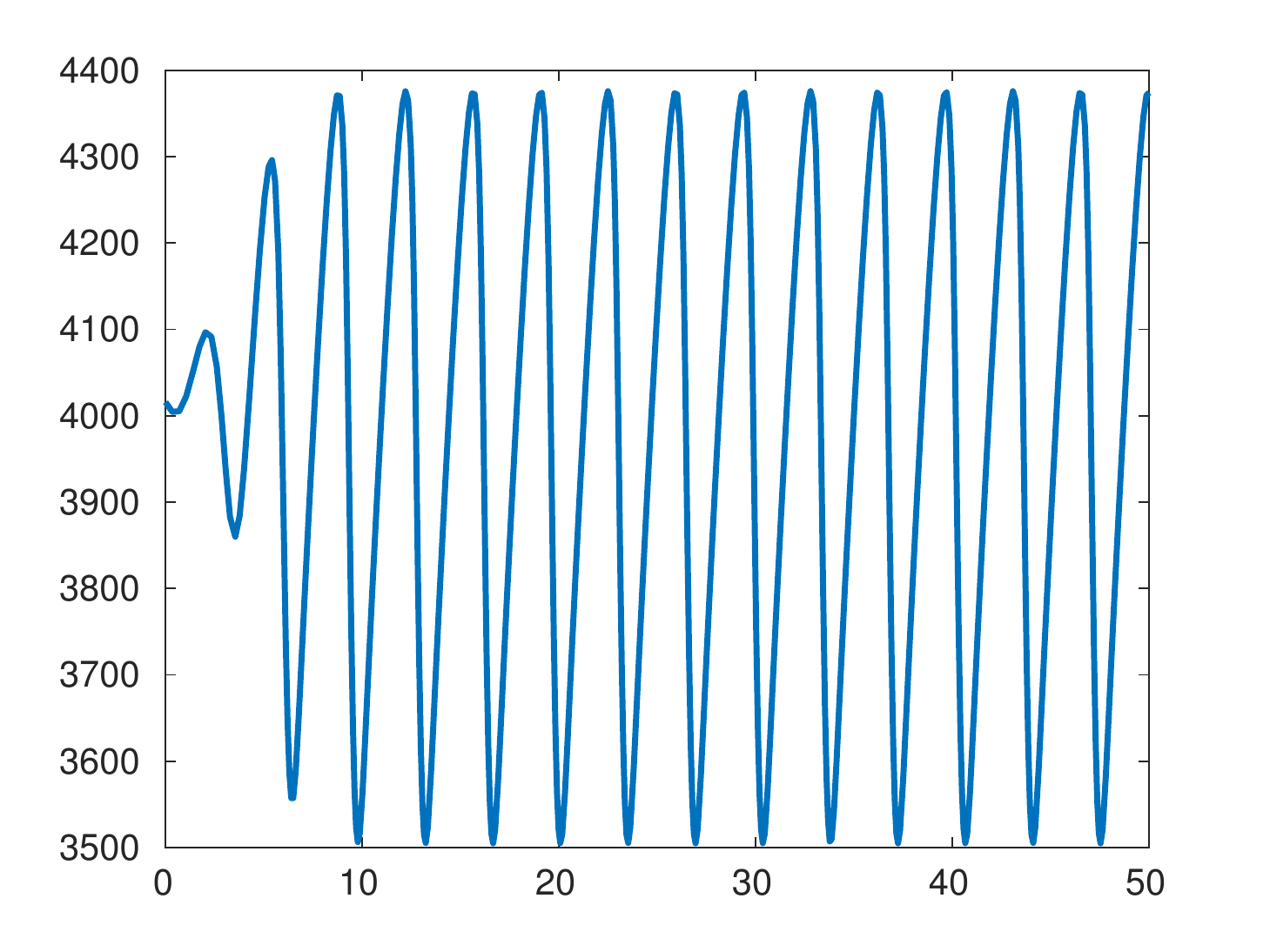}
\includegraphics[width=.5\textwidth]{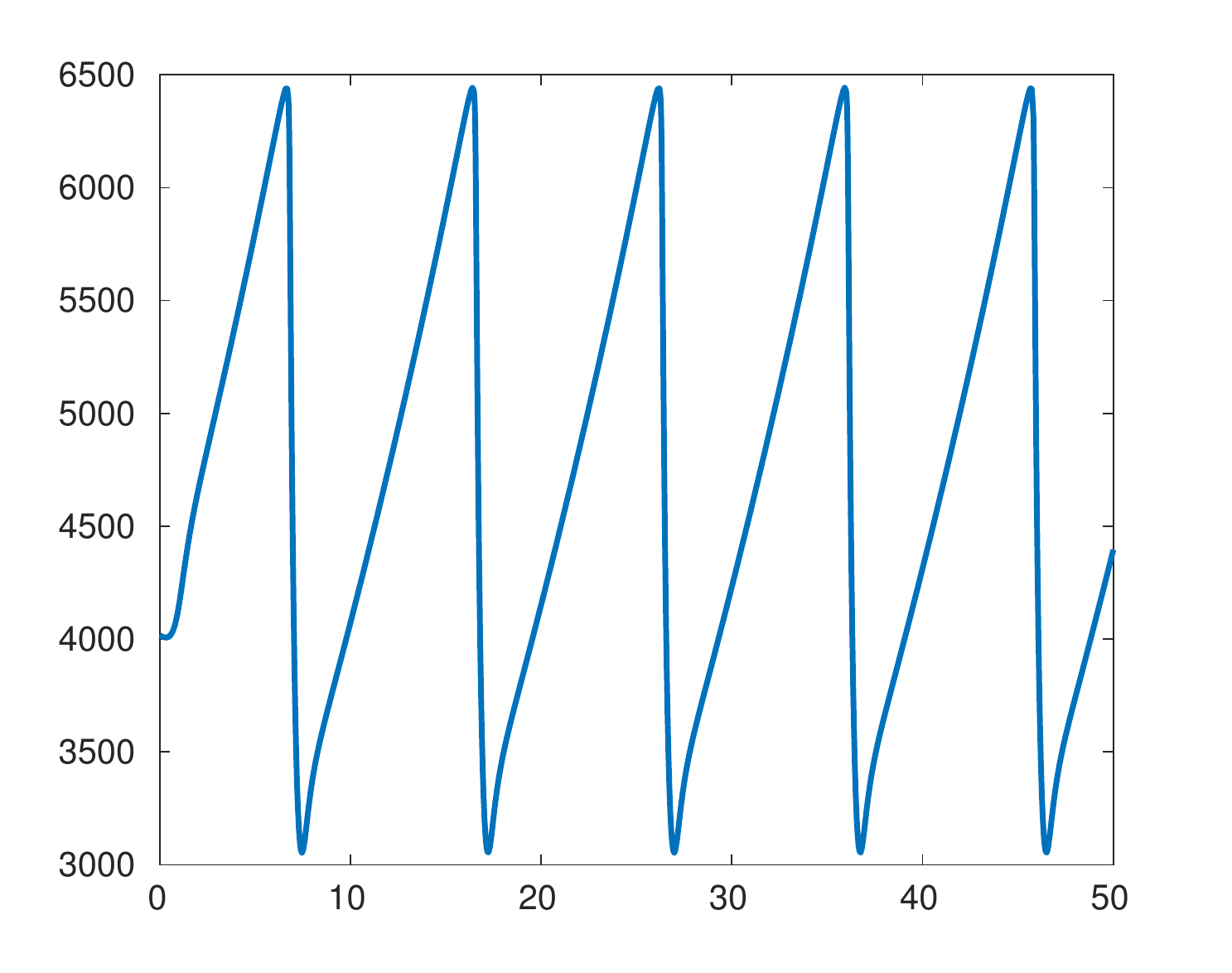}
\caption{Egg dynamics from \eqref{eq:S} for $h$ of Hill function type.
All parameters being fixed, including $p=3$ and $k=0.5$, $\iota = 0.05$ (top) or $\iota = 0.2$ (bottom) and $\zeta = 0.2$ (left) or $\zeta=0.8$ (right).}
\label{fig:Hill}
\end{figure}

\begin{figure}[h!]
\includegraphics[width=.5\textwidth]{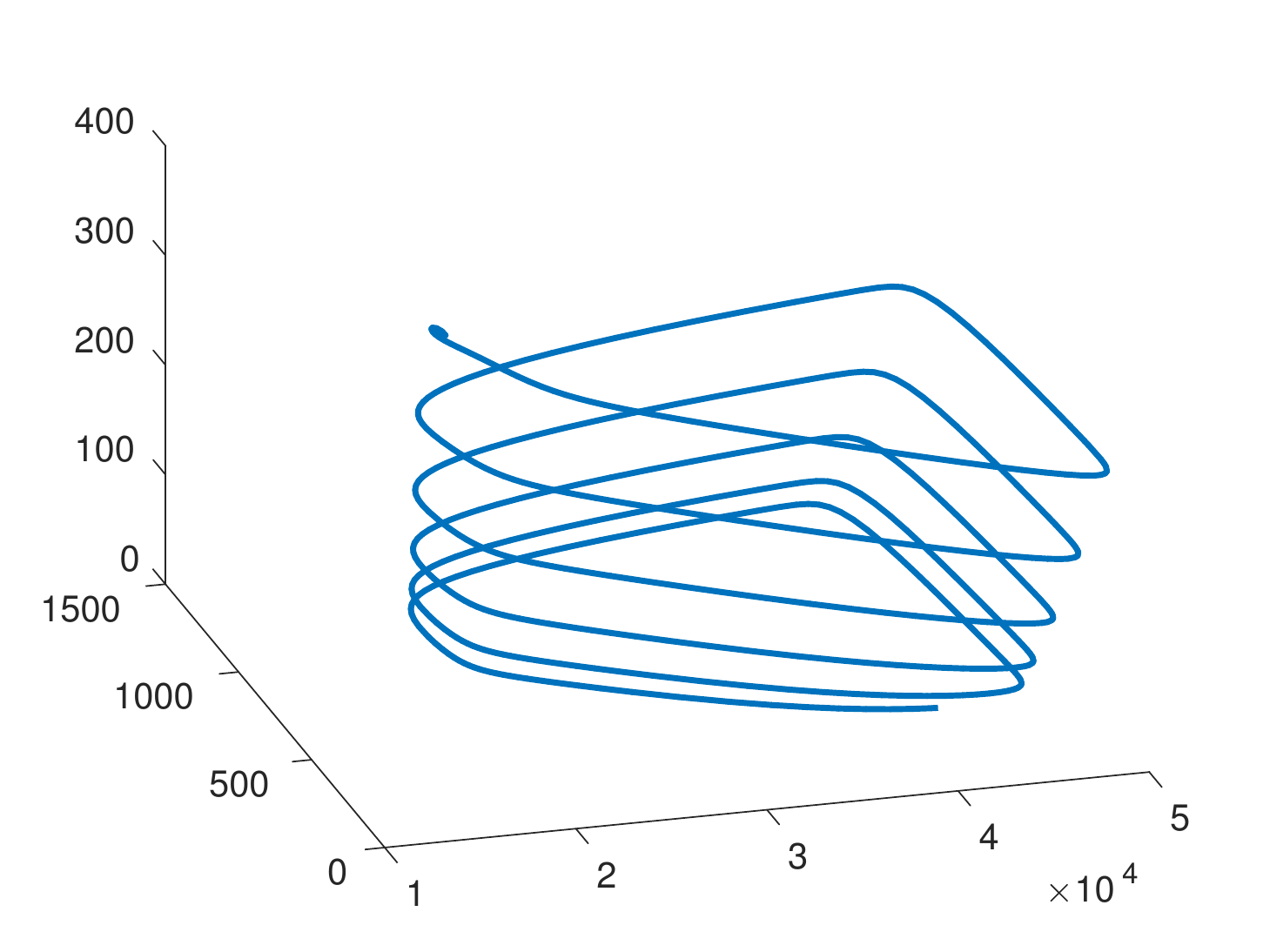}
\includegraphics[width=.5\textwidth]{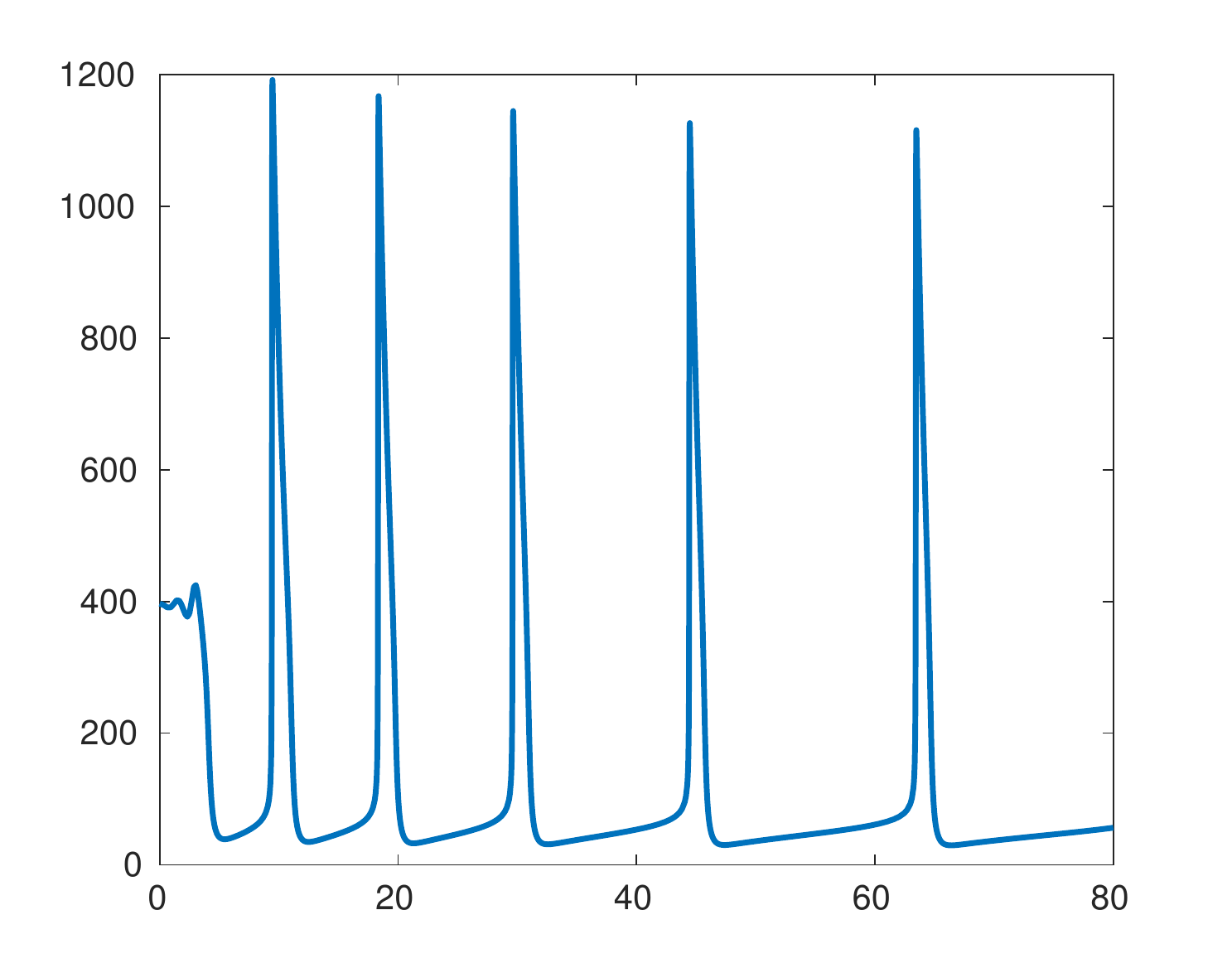}
\includegraphics[width=.5\textwidth]{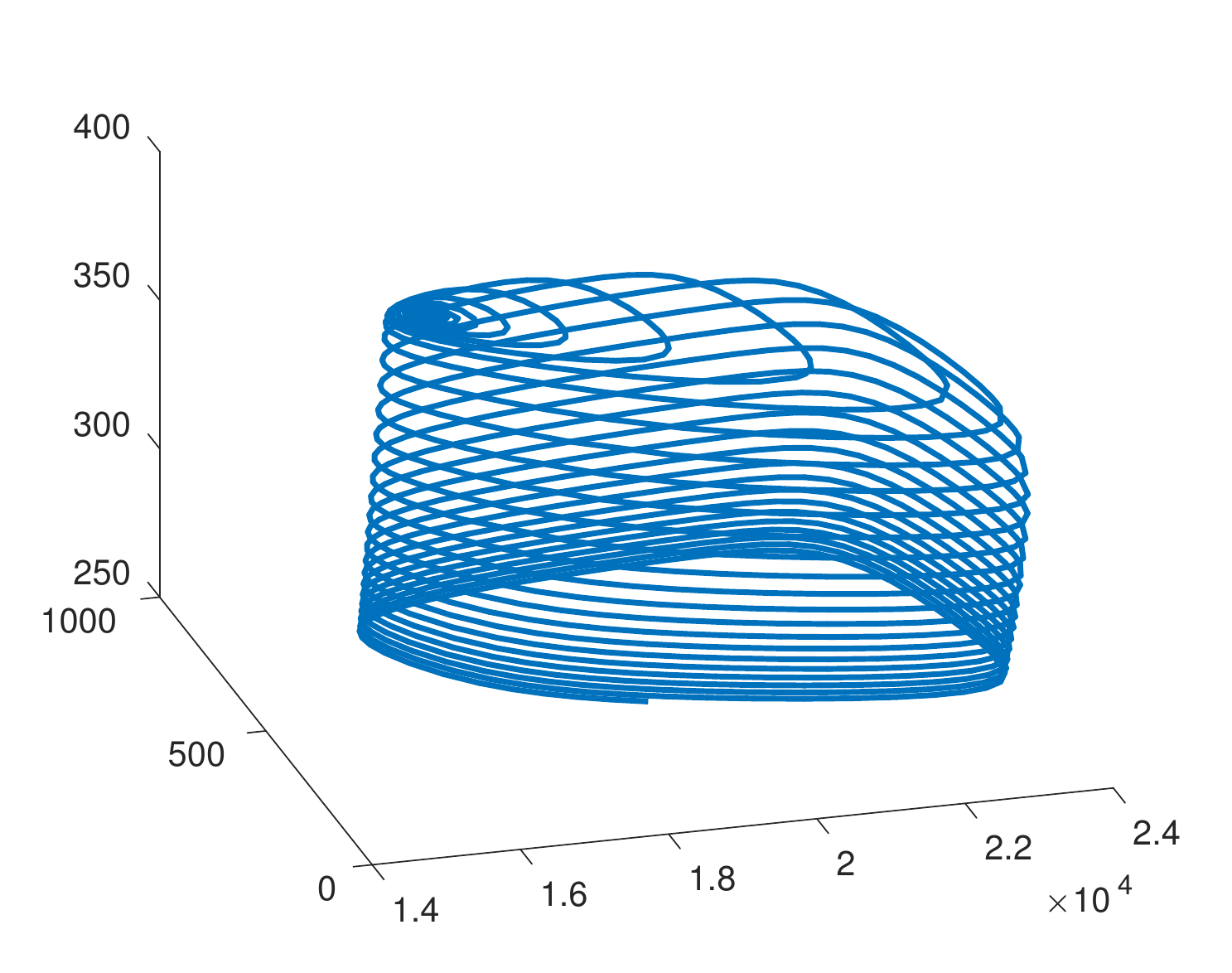}
\includegraphics[width=.5\textwidth]{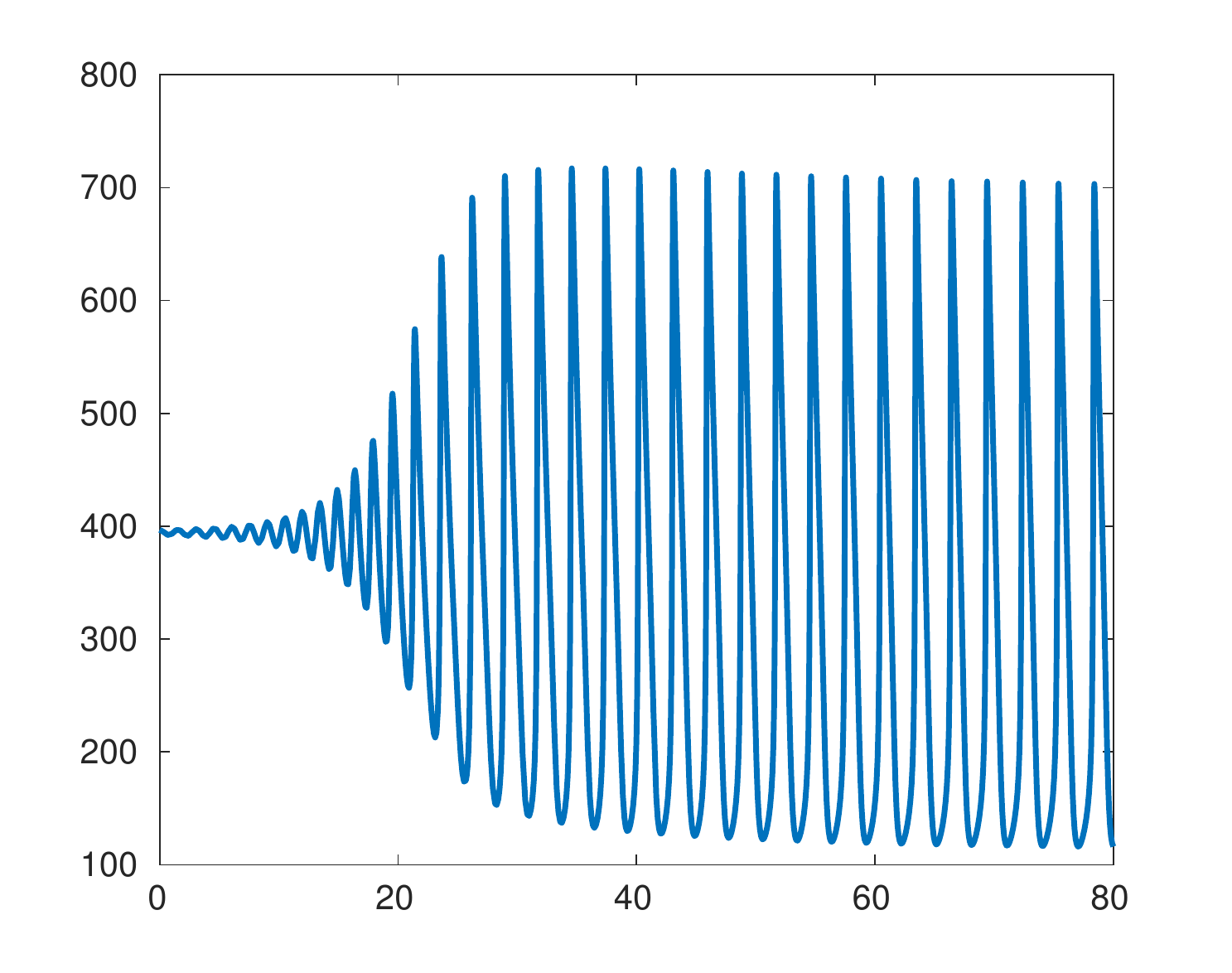}
\caption{Numerical solutions of \eqref{eq:S3} for $h$ defined by two different Hill functions.
All parameters being fixed, including $p=3$, $k=0.5$, $\iota = \f{1-X(k)}{10}$ and choosing $\zeta = 0.1$ (top) or $\zeta = 0.9$ (bottom).}
\label{fig:HillELA2}
\end{figure}

We choose
\[
 \alpha_{\iota} (k) = \Big( \f{X(k) + \iota}{1 - \big( X(k) + \iota \big)} \Big)^{1/p}
\]
and
\[
 a_{\zeta, \iota} (k) = \zeta k (1 + \alpha_{\iota}^p) + (1 - \zeta) k \f{(1+\alpha_{\iota}^p)^2}{\alpha_{\iota}^p} X(k) = k \df{1 - \zeta (1 - (X(k) + \iota)) }{(X(k) + \iota)(1 - (X(k) + \iota))}.
\]
For any choice of $\iota$ and $\zeta$, we end up with system \eqref{eq:S}, $h$ given by \eqref{eq:Hillshape}, featuring a unique, (locally linearly) unstable positive steady state.
At least numerically, solutions always exhibit periodic oscillations, as can be seen in Figure \ref{fig:Hill} for egg dynamics.

The above computations extend to the $3$-dimensional system \eqref{eq:S3}, and numerical observations are similar. Indeed, the condition \eqref{cond:alpha} guaranteeing positivity of the trace of the Jacobian at the unique positive equilibrium, rewrites for system \eqref{eq:S3} as $(p-2) \f{\beta_E \tau_L}{\delta_A} + \delta_L + \tau_L - \delta_A > k.$
In this case we define
\[
 X(k) := \df{\delta_A}{p \beta_E \tau_L} \big( 2 \f{\beta_E}{\delta_A}\tau_L - \delta_L - \tau_L + \delta_A + k  \big),
\]
and the above condition is equivalent to $X(k) < 1$.

Exactly as in the two-dimensional case, we explore the full range of \eqref{cond:k} by choosing the parameters $(\iota, \zeta) \in (0, 1 - X(k)) \times (0, 1)$ and defining $\alpha_{\iota} (k)$ and $a_{\iota, \zeta} (k)$ by the same formulas as before.
For all the numerical values we took for $\iota$ and $\zeta$, we always found oscillating solutions. Examples (dynamics of larvae and of $(E, L, A)$ in the three dimensional space) are shown in Figure \ref{fig:HillELA2}.

\section{Amplitude and period computation in the slow-fast regime}
\label{app:slowfast}

In the slow-fast approach, system \eqref{eq:S} exhibits oscillations with known amplitude and period at the limit $\e \to 0$. 
We show here how to compute this amplitude analytically.
To do so, we simply compute the local extrema of $u \mapsto \f{\eta u^2}{h(\eta u)}$. The first-order necessary condition is $x h'(x) = 2 h(x)$, where $x = \eta u$.

This provides with a general method to determine the limit trajectories.
With the previous example from \eqref{eq:Hillshape}, $ h(x) = h_m + a \df{x^p}{\big( \alpha \Lbar \big)^p + x^p}$, this boils down to
\[
 2 (h_m + a) x^{2p} + (\alpha \Lbar)^p \big( (2 - p) a + 4 h_m \big) x^p + 2 h_m (\alpha \Lbar)^{2 p} = 0.
\]
Letting $y = x^p$, we end up with a second-order polynomial, for which the analytical computation can be pushed a few steps further.
In particular, its discriminant is
\begin{align*}
 \Delta &= (\alpha \Lbar)^{2p} \Big( \big( (2 -p) a + 4 h_m \big)^2 - 16 h_m (h_m + a) \Big) \\
 &= (\alpha \Lbar)^{2p} a \big( (2 -p)^2 a - 8 p h_m \big).
\end{align*}
Hence there are exactly two positive local extrema if and only if $ (2 -p)^2 a > 8 p h_m$ and  $(2 -p) a + 4 h_m < 0$.
The first condition implies the second one if $p > 2$, and the second one is impossible if $p \leq 2$. Therefore the only case when there are two local extrema is when 
\beq
  p > 2 \text{ and } \f{h_m}{a} < \f{(p-2)^2}{8 p}.
  \label{cond:twoLE}
\eeq
Under assumption \eqref{cond:twoLE} we find that the extrema ($y_M < y_m$) are located at
\[
  (\alpha \Lbar)^p \f{ (p-2) a - 4 h_m \pm \sqrt{ a^2 (p-2)^2 - 8 a p h_m} }{4 (h_m + a)}.
\]

Let $\xi_{\pm} = (p-2) a - 4 h_m \pm \sqrt{ a^2 (p-2)^2 - 8 a p h_m}$. With the notations of Lemma \ref{lem:trajectories},
\begin{align*}
 u_m &= \f{\alpha}{\eta} \Lbar \Big(\f{\xi_+ }{4 (h_m + a)} \Big)^{1/p}, \quad \phi_m = \f{\eta u_m^2}{h(\eta u_m)},
 \\
 u_M &= \f{\alpha}{\eta} \Lbar \Big(\f{\xi_- }{4 (h_m + a)} \Big)^{1/p}, \quad \phi_M = \f{\eta u_M^2}{h(\eta u_M)}.
\end{align*}

Then we can compute $u_r^0$ for $r \in \{m, M\}$ by solving $\f{\eta \cdot (u_r^0)^2}{h(\eta u_r^0)} = \phi_r$.
Unfortunately this cannot be done analytically. However, the amplitude of the oscillations in terms of $v$ is equal to
\[
 A_v := \phi_M - \phi_m.
\]

With $\Ebar = 1/\e$, we expect that the oscillations of $E$ have amplitude
\[
 \f{\phi_M - \phi_m}{\e} = \f{\alpha^2 \Lbar^2}{\eta \e}\Big(\f{\big(\f{ \xi_- }{4 (h_m + a)}\big)^{2/p}}{h_m + a \f{\xi_-}{h_m + \xi_-}} - \f{\big(\f{ \xi_+ }{4 (h_m + a)}\big)^{2/p}}{h_m + a \f{\xi_+}{h_m + \xi_+}} \Big),
\]
where $\eta = \f{\Lbar^2}{h(\Lbar)} = \f{\Lbar^2}{h_m + \f{a}{1 + \alpha^p}}$, by \eqref{eq:phieta}.
Hence the amplitude of egg oscillations is equal to
\[
 \f{1}{\e} A_v = \f{1}{\e} \f{\alpha^2 (h_m + \f{a}{1+\alpha^p})}{(4 (h_m + a))^{2/p}}\Big(\f{\xi_-^{2/p}}{h_m + a \f{\xi_-}{h_m + \xi_-}} -
 \f{\xi_+^{2/p}}{h_m + a \f{\xi_+}{h_m + \xi_+}} \Big).
\]
We can simplify this expression one step further by letting $\rho := h_m/a$.
 Then we notice that $q_{\pm} := \xi_{\pm}/a = p-2 - 4 \rho \pm \sqrt{(p-2)^2 - 8 p \rho}$ and deduce
\beq
 A_v = \f{\alpha^2}{1 + \alpha^p} \f{1 + \rho + \alpha^p}{(4(1+\rho))^{2/p}} \Big( \f{(\rho q_{-})^{2/p}}{1 + \f{\rho^2 q_-}{1 + \rho q_-}} - \f{(\rho q_{+})^{2/p}}{1 + \f{\rho^2 q_+}{1 + \rho q_+}} \Big).
\eeq

In particular we notice that the amplitude depends only on the function $h$ through $\rho$, $\alpha$ (hence $\Lbar$) and $p$, and not on any other biological parameter, under the constraints \eqref{cond:twoLE}.

An interesting case is when $p \to +\infty$, where $h$ approaches a step function from $h_m$ to $h_m + a$, with its jump located at $\alpha \Lbar$. In this limit we can compute the amplitudes in $u$ and $v$:
\[
\bepa
 A_u = \f{\alpha}{\Lbar} (h_m + a \mathds{1}_{\alpha < 1} + \f{a}{2} \delta_{\alpha=1}) \big( \sqrt{\f{\rho+1}{\rho}} - \sqrt{\f{\rho}{\rho+1}} \big),
\\[10pt]
A_v = \alpha^2 (h_m + a \mathds{1}_{\alpha <1} + \f{a}{2} \delta_{\alpha=1}) \f{1}{\rho (h_m + a)}.
\eepa
\]

If we assume $d_E = 0$ (for simplicity), using formula \eqref{formula:tau}, we can also obtain in this case an analytical expression for the period of the oscillations:
\[
 \tau = \frac{2}{h_m} \log \Big( \frac{h_m + a/2 - \alpha \Lbar}{h_m + a/2 - \alpha \Lbar \sqrt{\frac{\rho}{1+\rho}}} \Big) + \frac{2}{h_m + a} \log \Big( \frac{h_m + a/2 - \alpha \Lbar \sqrt{\frac{1+\rho}{\rho}}}{h_m + a/2 - \alpha \Lbar} \Big)
\]
Indeed, $h(u) = h_m$ if $u < \alpha \Lbar$ and $h(u) = h_m + a$ if $u > \alpha \Lbar$ so that $f(u, \phi(u)) = \eta_0 u \big( \xi - u \big)$ and $\phi'(u) = \frac{2 \eta_0}{h_m} u$ if $u < \alpha \Lbar$ and $\phi'(u) = \frac{2 \eta_0}{h_m + a} u$ if $u > \alpha \Lbar$.
\section{Numerical oscillations, period and amplitude close to the bifurcation}
\label{app:numPA}

We illustrate the statements from Section \ref{lll} with numerical examples. Biological parameters of \eqref{eq:S} are taken at a temperature around $25\degree{C}$ which leads to 
 $\Abar= 3.4$ mosquitoes per $100$ square meters (taken from a physical situation described in \cite{Density})  and  $b_E = 20.94, d_L = 0.15.$ (taken from \cite{Temp}).
To fit the condition $d_E \ll d_L$, $d_E$ is fixed arbitrarily at $\f{1}{180}$.
We note that condition \eqref{hyp:params} is satisfied: $b_E = 20.94 > 0.15 + \f{1}{180} = d_L + d_E$.

\begin{figure}[h]
 \includegraphics[width=.5\linewidth]{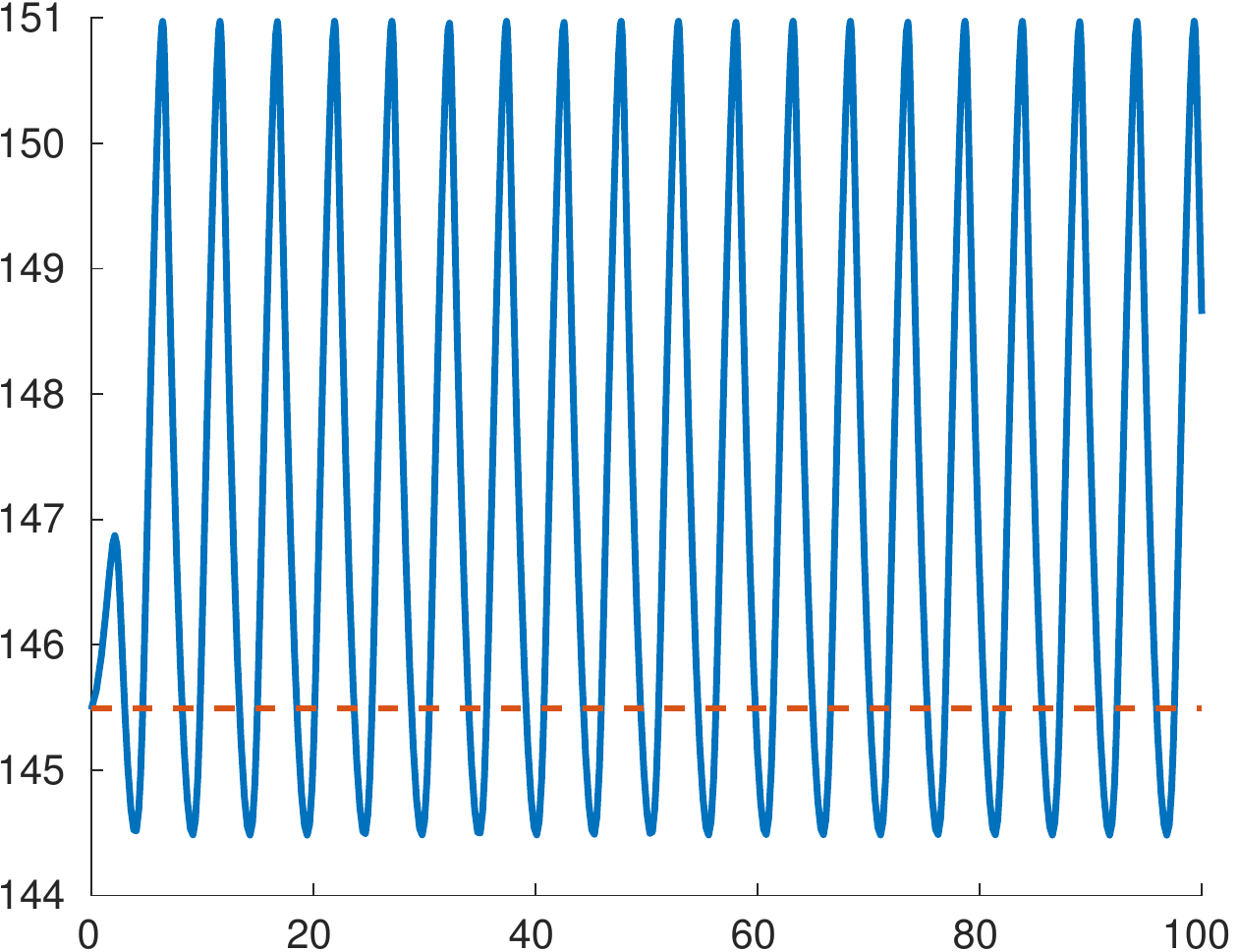}
 \includegraphics[width=.5\linewidth]{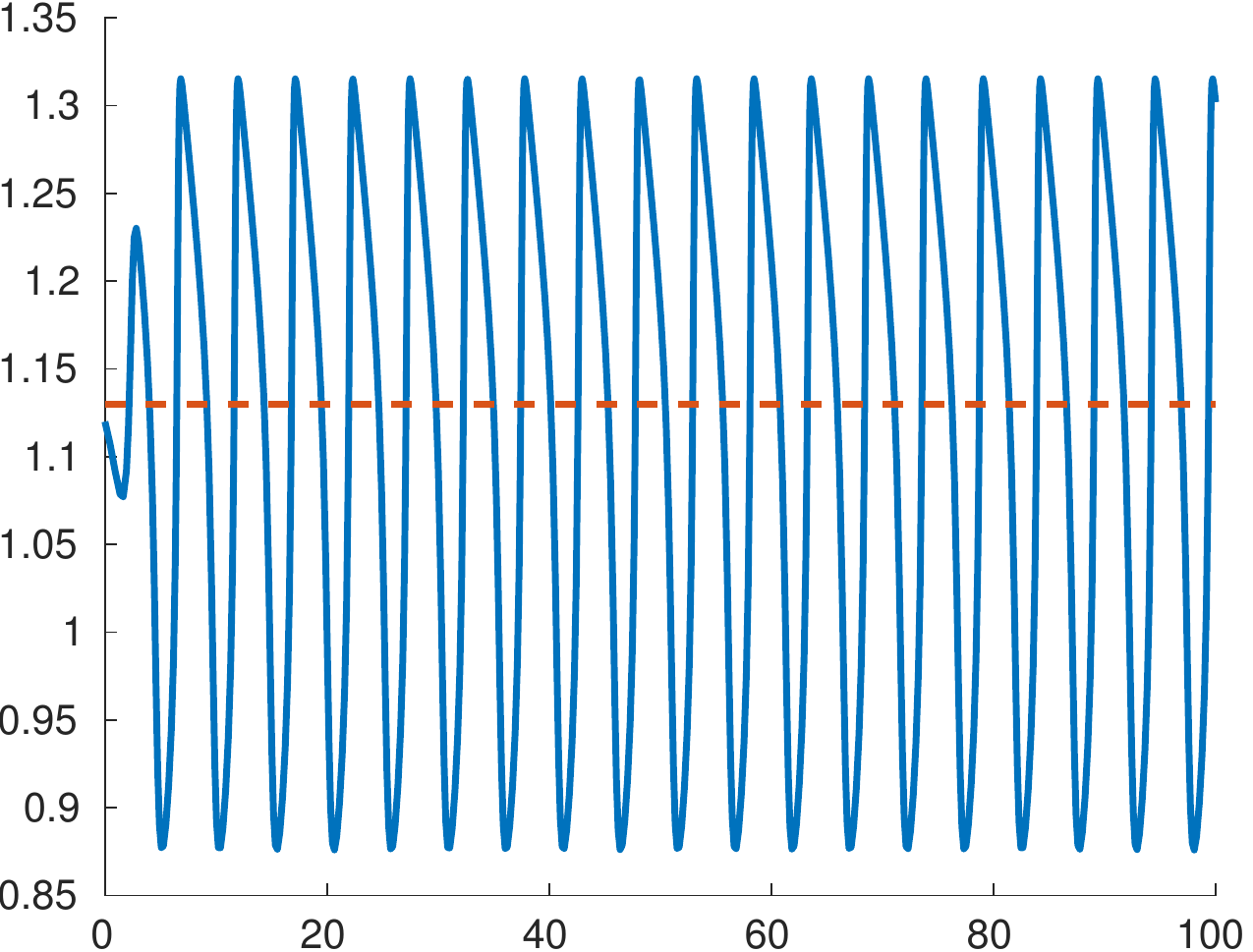}
 \caption{Time dynamics of eggs (left) and larvae (right) for $a_1=0.1$, $b_1^{0.05}=2.91$.}
 \label{fig:eq}
\end{figure}

The parameters $a$ and $b$ are chosen so that Theorem \ref{mainres} applies, which proves the existence of periodic solutions close to the non-trivial steady states.
We perform numerical test by letting a parameter $j$ vary in a set $J$ of $18$ values between $0.05$ and $4$ in order to obtain
$162$ couples $(a_i,b_i^j)_{i=1,\dots,9;j\in J}$ by 
\[
a_{i}=0.1+0.05(i-1) \text{ and } b_i^j=b_{i,{min}}+j\times b_{i,{min}},
\]
where $b_{i,{min}}$ is the minimal $b$ that can be chosen for $a_i$ to obtain oscillations (if $b<b_{i,{min}}$ the solutions can not oscillate), {\it i.e.} for which the trace of the linearized operator is equal to $0$.

The hatching functions are: 
$$h_i^j(L)=a_i\Big(\arctan(b_i^j(L-\Lbar))+\frac{\pi}{2}\Big).$$
In our tests the steady state changes with $i$ (for example $\Ebar_1=145.92$, $\Ebar_4=59.59$
and $\Ebar_9=30$)
but we always have $\Lbar=\Abar\frac{\delta_A}{\tau_L}=1.13$.

\begin{figure}
 \includegraphics[width=.5\linewidth]{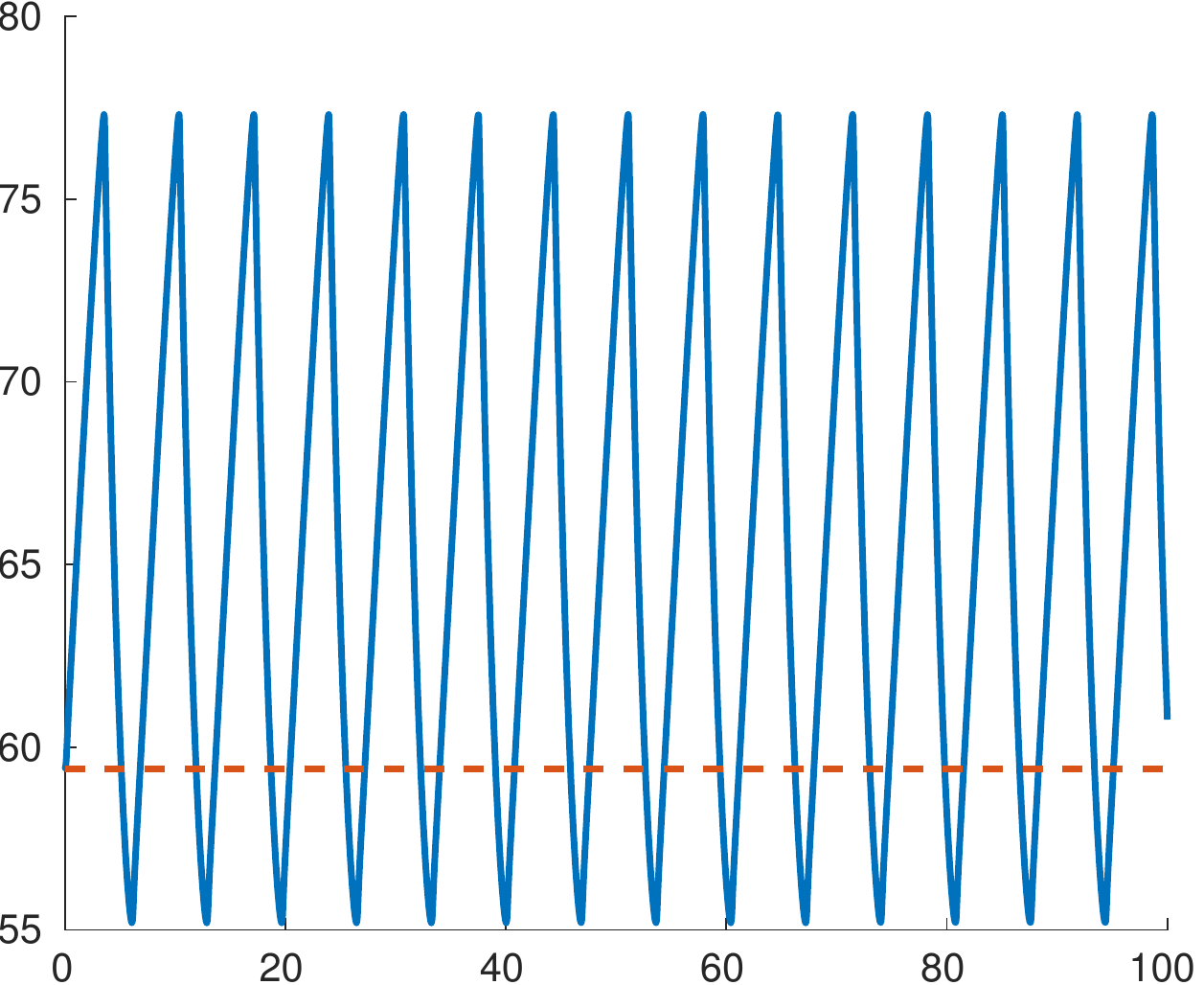}
 \includegraphics[width=.5\linewidth]{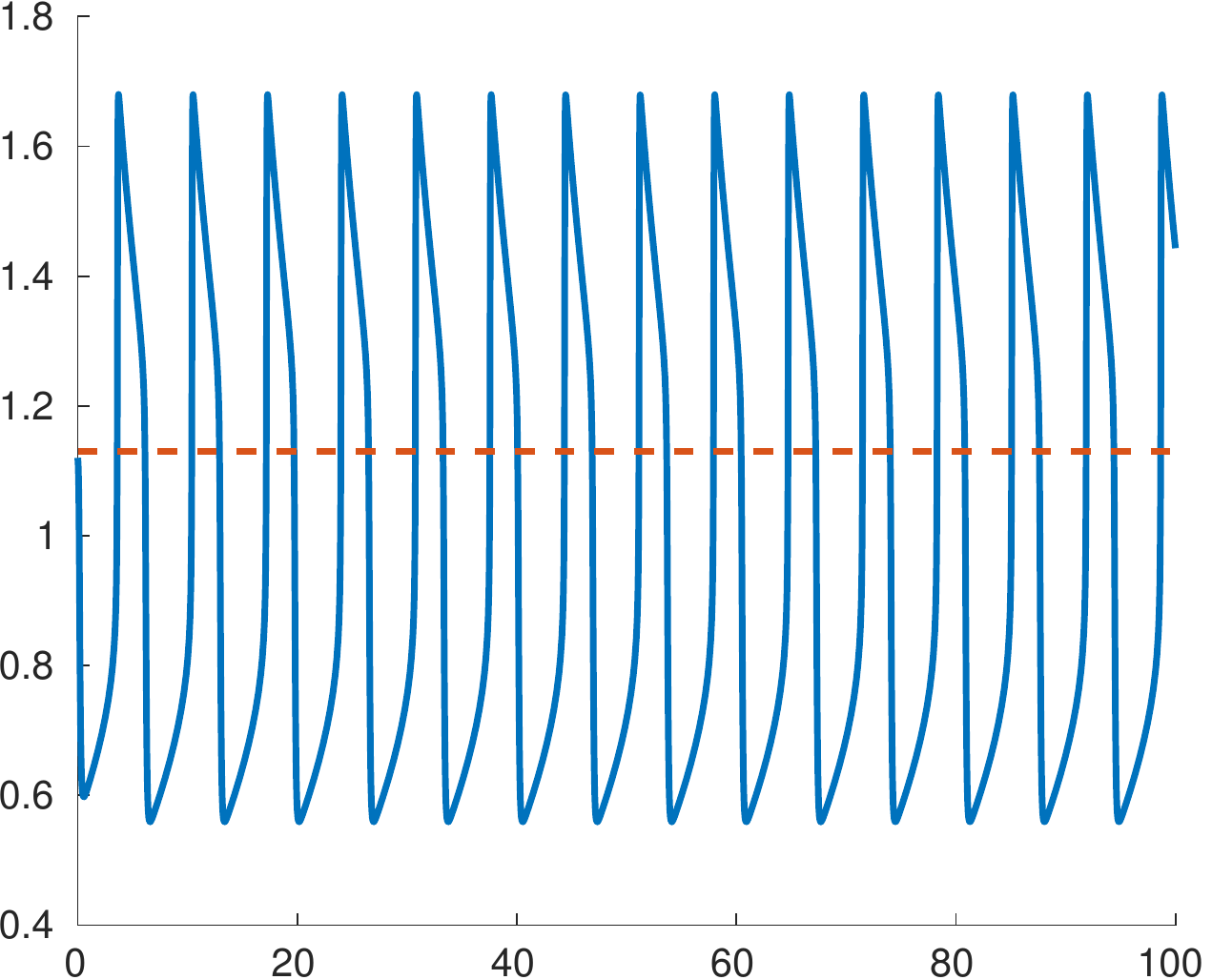}
 \caption{Time dynamics of eggs (left) and larvae (right) for $a_2=0.25$, $b_2^{0.5}=4.18$.}
 \label{fig:eq5}
\end{figure}

\begin{figure}
 \includegraphics[width=.5\linewidth]{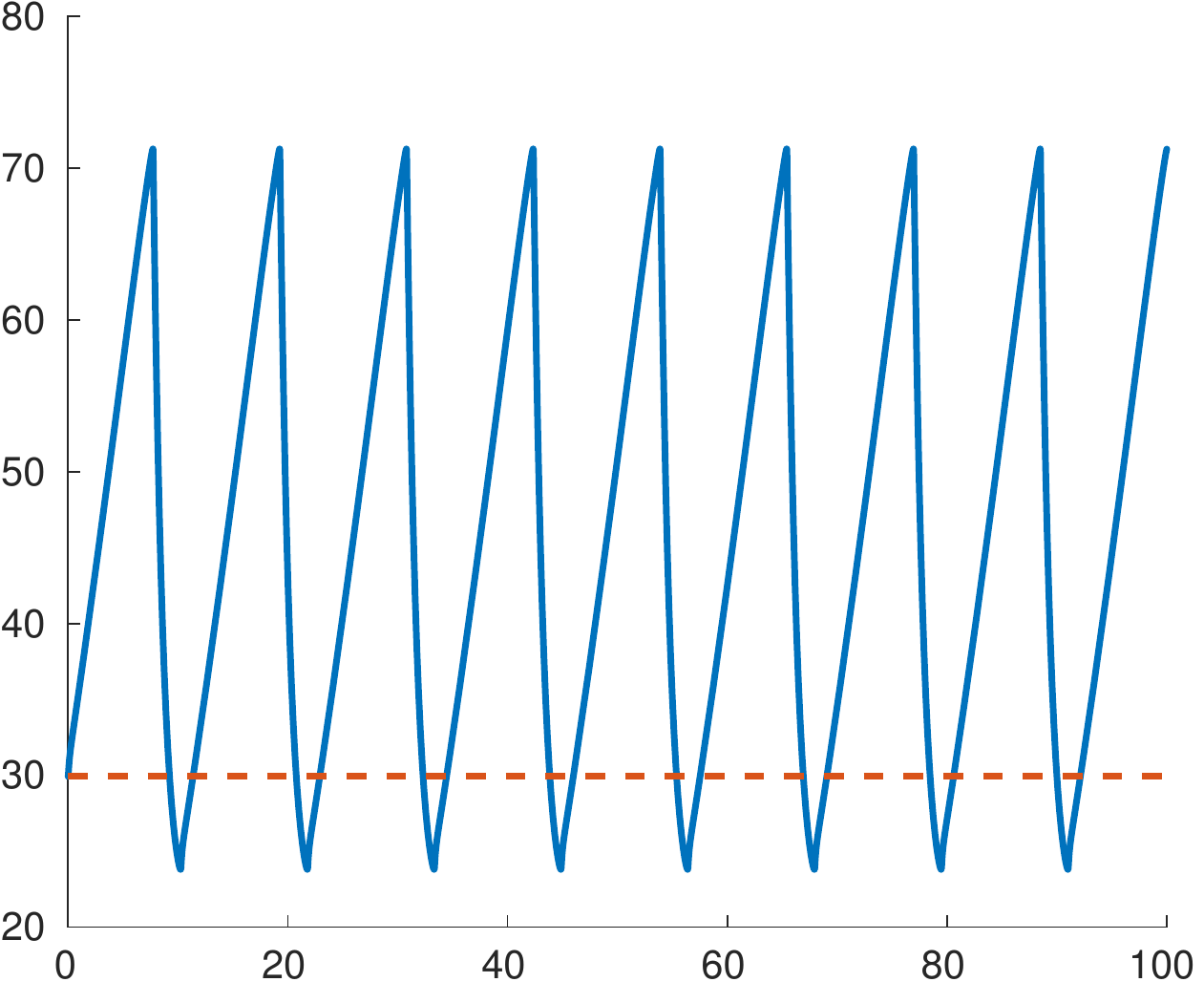}
 \includegraphics[width=.5\linewidth]{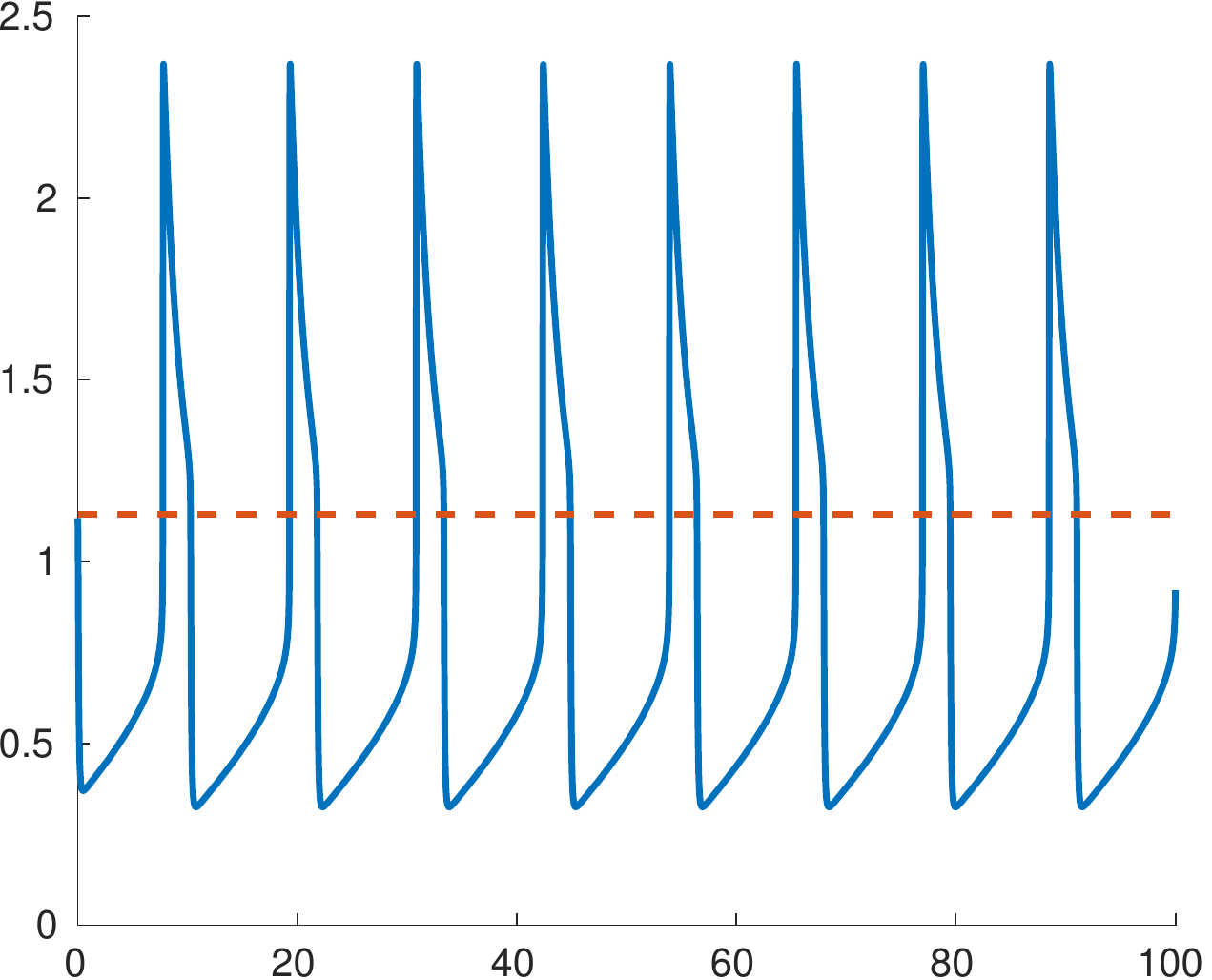}
 \caption{Time dynamics of eggs (left) and larvae (right) for $a_3=0.5$, $b_3^{2}=8.44$.}
 \label{fig:eq9}
\end{figure}

\begin{table}[h]
\centering
\begin{tabular}{|l||c|c|c|c|}
\hline
$a=0.1$ & Period (days) & $\Ebar$ & $\Lbar$ &Larvae amplitude ($\%\Lbar$)\\
\hline
$b=2.91$ & $5.18$ & &&$23.1$\\
\cline{1-2}\cline{5-5}
$b=4.16$ & $15.06$ & $145.92$ &$1.13$&$51.3$\\
\cline{1-2}\cline{5-5}
$b=8.32$ & $61.54$ & &&$110.22$\\
\cline{1-2}\cline{5-5}
{$b=3.52$} & $9.6$ & &&$42.08$\\
\hline
\end{tabular}
\caption{Steady states, period and amplitude of oscillations for $a=.1$}
\label{Tablea=0.1}
\end{table}

\begin{table}[h]
\centering
\begin{tabular}{|l||c|c|c|c|}
\hline
$a=0.25$ & Period (days) & $\Ebar$ & $\Lbar$ &Larvae amplitude ($\%\Lbar$)\\
\hline
$b=2.93$& $2.68$ & && $20.35$\\
\cline{1-2}\cline{5-5}
$b=4.18$ & $6.96$ & $59.59$ & $1.13$ & $50.67$\\
\cline{1-2}\cline{5-5}
$b=8.37$ & $22.64$ & && $110.2$ \\
\cline{1-2}\cline{5-5}
$b=4.99$ & $9.98$ & && $63.03$\\
\hline
\end{tabular}
\caption{Steady states, period and amplitude of oscillations for $a=.25$}
\label{Tablea=0.25}
\end{table}

\begin{table}[h!]
\centering
\begin{tabular}{|l||c|c|c|c|}
\hline
$a=0.5$ & Period (days) & $\Ebar$ & $\Lbar$ &Larvae amplitude ($\%\Lbar$)\\
\hline
$b=2.96$ & $1.72$ & && $18.03$\\
\cline{1-2}\cline{5-5}
$b=4.22$ & $3.8$ & $30$ & $1.13$ & $49.8$\\
\cline{1-2}\cline{5-5}
$b=8.44$ & $11.5$ & && $109.9$\\
\cline{1-2}\cline{5-5}
$b=7.6$ & $10.1$ & && $99.43$\\
\hline
\end{tabular}
\caption{Steady states, period and amplitude of oscillations for $a=.5$}
\label{Tablea=0.5}
\end{table}

\begin{figure}[h!]
\includegraphics[width=.5\linewidth]{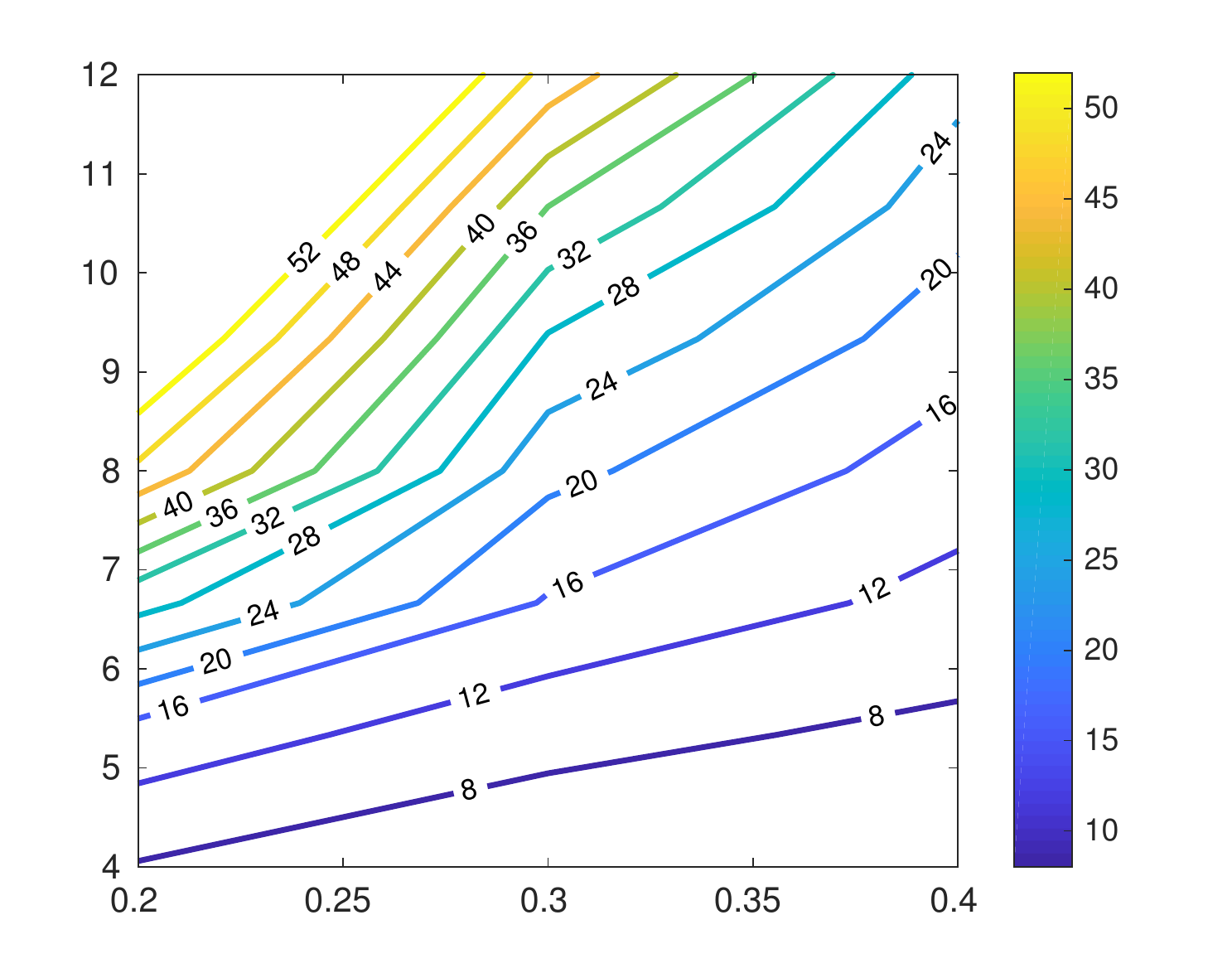}
\includegraphics[width=.5\linewidth]{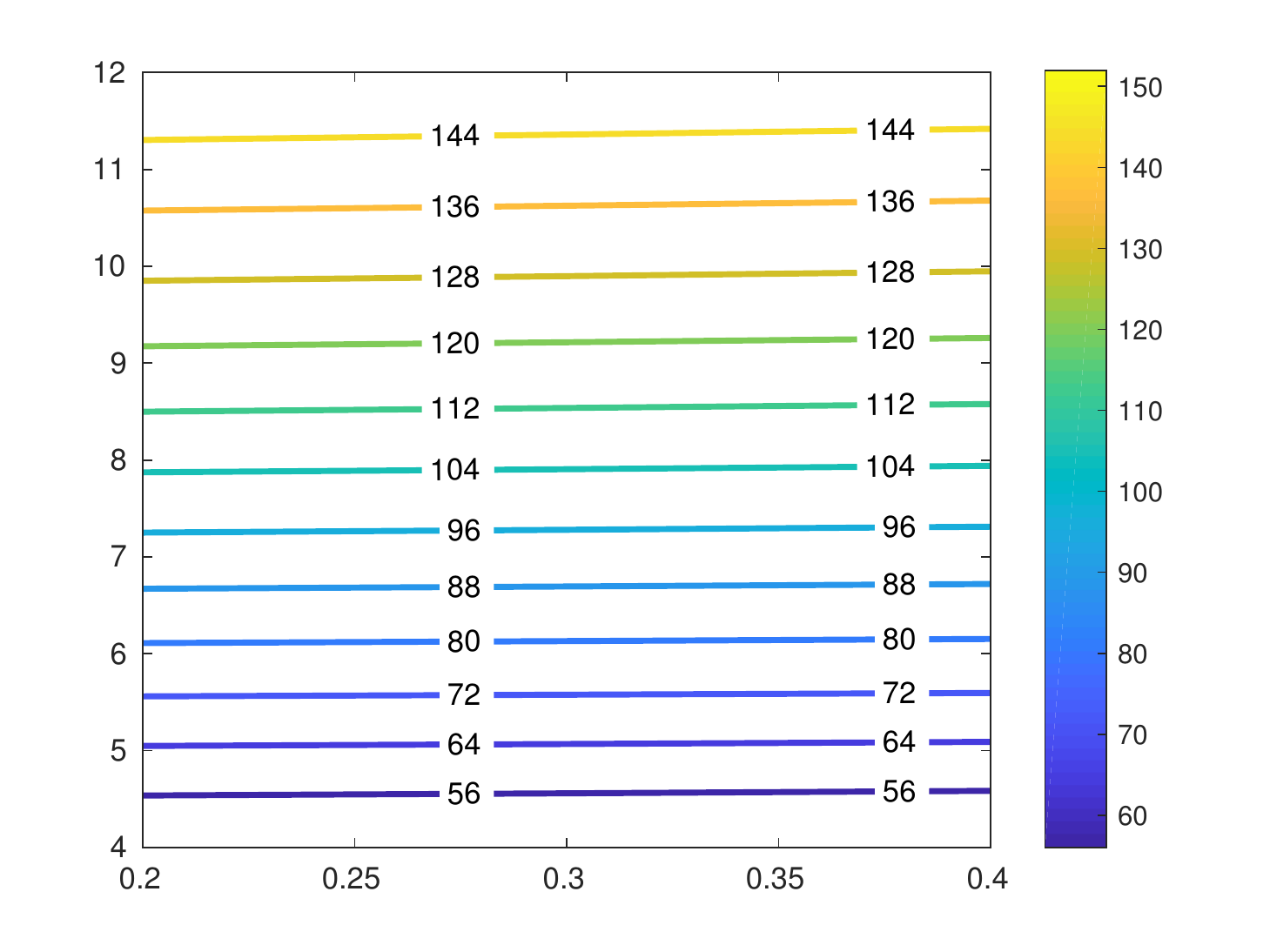}
 \caption{Larvae dynamics period $T_0$ in days ({\it left}) and larvae dynamics amplitude (Amp) in percentage of $\Lbar$ ({\it right}), for different couples $(a,b)$. }
 \label{fig3}
\end{figure}

We provide numerical results for $i \in \{1, 4, 9\}$ and $j \in \{ 0.1, 0.25, 0.5\}$ initial data close to the steady state (which is drawn in dashed line). Two sets of initial data are chosen, $(E(0),L(0))=(\Ebar,\Lbar)$ (green) and $(E(0),L(0))=(\Ebar,\Lbar+0.02)$ (blue), which gives oscillations that appear to be periodic in time.
Simulations are made with $a=a_1$ in Figure $\ref{fig:eq}$ (with $b=b_1^{0.05}$) and a time variable evaluated in $[0,100]$ days ; $a=a_2$ in Figure $\ref{fig:eq5}$ (with $b=b_2^{0.5}$) and a time variable evaluated in $[0,100]$ days ; $a=a_3$ in Figure $\ref{fig:eq9}$ (with $b=b_3^2$) and a time variable evaluated in $[0,150]$ days.

Considering the blue curves, we sum up in the Tables \ref{Tablea=0.1}, \ref{Tablea=0.25} and \ref{Tablea=0.5} what we obtain for the period and the oscillations' amplitude taken by the solutions. In the last line of the tables we give a value of $b$ that can be chosen to obtain a period of about 10 days.
Relative amplitude of the oscillations is expressed as a percentage of the (constant) value $\Lbar$.

It is possible to achieve the same period $T_0$ for different couples of parameters $(a,b)$. 
For a fixed $a$, when $b$ is increasing, the period $T_0$ and the amplitude of larvae are increasing too. The amplitude, on the contrary, mainly depends on $b$. This is illustrated in Figure $\ref{fig3}$.

\bibliographystyle{abbrv}
\bibliography{biblio}

\end{document}